\def\thetitle{The diameter of uniform spanning trees in high dimensions}
\definecolor{CombinatoricaAqua}{HTML}{00698C}
\definecolor{CombinatoricaBlue}{HTML}{3A3293}
\definecolor{CombinatoricaBrown}{HTML}{66220C}
\definecolor{CombinatoricaRed}{HTML}{DF2A27}
\definecolor{HarvardCrimson}{rgb}{0.6471, 0.1098, 0.1882}
\let\reftagform@=\tagform@	
\def\tagform@#1{\maketag@@@
	{(\ignorespaces\textcolor{CombinatoricaBrown}{#1}\unskip\@@italiccorr)}}
\renewcommand{\eqref}[1]{\textup{\reftagform@{\ref{#1}}}}
\Crefname{fact}{Fact}{Facts}
\Crefname{claim}{Claim}{Claims}
\declaretheoremstyle[
spaceabove=\topsep, spacebelow=\topsep,
headfont=\color{CombinatoricaBrown}\normalfont\bfseries,
bodyfont=\itshape,
]{thm}
\declaretheoremstyle[
spaceabove=\topsep, spacebelow=\topsep,
headfont=\color{CombinatoricaBrown}\normalfont\bfseries,
bodyfont=\normalfont,
]{dfn}
\declaretheoremstyle[
spaceabove=0.5\topsep, spacebelow=0.5\topsep,
%headfont=\color{CombinatoricaBrown}\normalfont\itshape,
headfont=\color{CombinatoricaBrown}\normalfont\bfseries,
bodyfont=\normalfont,
]{rmk}
\declaretheorem[style=thm,parent=section]{theorem}
\declaretheorem[style=thm,sibling=theorem]{lemma}
\declaretheorem[style=thm,sibling=theorem]{corollary}
\declaretheorem[style=thm,sibling=theorem]{claim}
\declaretheorem[style=thm,sibling=theorem]{observation}
\declaretheorem[style=definition,numbered=no]{acknowledgements}
\renewcommand{\PrintNames@a}[4]{%
	\PrintSeries{\name}
	{#1}
	{}{ and \set@othername}
	{,}{ \set@othername}
	{}{ and \set@othername}
	{#2}{#4}{#3}%
}
\def\mathcolor#1#{\@mathcolor{#1}}
\def\@mathcolor#1#2#3{%
	\protect\leavevmode
	\begingroup
	\color#1{#2}#3%
	\endgroup
}
\definecolor{Red}{rgb}{0.618,0,0}
\definecolor{Blue}{rgb}{0,0,1}
\definecolor{Green}{rgb}{0,0.298,0}
\newcommand{\pvec}[1]{\vect{p}_{#1}}
\title{\thetitle}
\author{Peleg Michaeli \and Asaf Nachmias \and Matan Shalev}
\def\namedlabel#1#2{\begingroup
  #2%
  \def\@currentlabel{#2}%
  \phantomsection\label{#1}\endgroup
}
\newcommand{\midd}{\ \middle\vert\ }
\newcommand{\defn}[1]{{\bfseries #1}}
\newcommand{\eps}{\varepsilon}
\renewcommand{\phi}{\varphi}
\newcommand{\NN}{\mathbb{N}}
\newcommand{\RR}{\mathbb{R}}
\newcommand{\cB}{\mathcal{B}}
\newcommand{\cQ}{\mathcal{Q}}
\newcommand{\cT}{\mathcal{T}}
\newcommand{\stadist}{n^{-2	}}
\newcommand{\I}{\mathcal{I}}
\newcommand{\cW}{\mathcal{W}}
\newcommand{\cM}{\mathcal{M}}
\newcommand{\sB}{\mathsf{B}}
\newcommand{\sC}{\mathsf{C}}
\newcommand{\sF}{\mathsf{F}}
\newcommand{\sT}{\mathsf{T}}
\newcommand{\lr}{\leftrightarrow}
\newcommand{\floor}[1]{\left\lfloor{#1}\right\rfloor}
\newcommand{\ceil}[1]{\left\lceil{#1}\right\rceil}
\DeclareMathOperator{\polylog}{polylog}
\newcommand{\sm}{\setminus}
\newcommand{\es}{\varnothing}
\renewcommand{\d}{\deg}
\DeclareMathOperator{\diam}{diam}
\DeclareMathOperator{\dist}{dist}
\newcommand{\drat}{\hat{d}}
\newcommand{\vect}{\mathbf}
\newcommand{\pr}[0]{\mathbb{P}}
\newcommand{\E}[0]{\mathbb{E}}
\DeclareMathOperator{\proband}{and}
\DeclareMathOperator{\loops}{loops}
\newcommand{\pand}[0]{\ \proband \ }
\newcommand{\dtv}{d_{\mathrm{TV}}}
\newcommand{\ind}{\mathbf{1}}
\newcommand{\ceff}{\mathcal{C}_{\mathrm{eff}}}
\newcommand{\tmix}{t_{\mathrm{mix}}}
\newcommand{\target}{t_{\odot}}
\newcommand{\LE}{\mathsf{LE}}
\newcommand{\CT}{\mathsf{CT}}
\newcommand{\CP}{\mathsf{CP}}
\newcommand{\AB}{\mathsf{AB}}
\newcommand{\CRT}{\mathsf{CRT}}
\newcommand{\UST}{\mathsf{UST}}
\newcommand{\LERW}{\mathsf{LERW}}
\newcommand{\sun}[1]{#1^*}
\newcommand{\bub}{\cB}
\newcommand{\fut}{\mathcal{F}}
\newcommand{\past}{\mathcal{P}}
\newcommand{\height}{\mathfrak{h}}
\DeclareMathOperator{\Vol}{Vol}
\let\Cap\relax
\DeclareMathOperator{\Cap}{Cap}
\DeclareMathOperator{\Close}{Close}
\newcommand{\ball}{\mathfrak{B}}
\newcommand{\weight}{\mathbf{w}}
\newcommand{\Weight}{\mathbf{W}}
\newcommand{\partition}{\mathbf{Z}}
\newcommand{\lcevent}{F^{\star}}
\newcommand{\scevent}{F_{\star}}
\renewcommand{\r}{r}
\newcommand{\s}{s}
\renewcommand{\q}{q}
\begin{document}
\maketitle

%%%%%%%%%%%%%%%%%%%%%%%%%%%%%%%%%%%%%%%%%%%%%%%%%%%%%%%%%%%%%%%%%%%%%%%%%%%%%%%
% Abstract
\begin{abstract}
We show that the diameter of a uniformly drawn spanning tree of a connected graph on $n$ vertices which satisfies certain high-dimensionality conditions typically grows like $\Theta(\sqrt{n})$.
In particular this result applies to expanders, finite tori $\mathbb{Z}_m^d$ of dimension $d \geq 5$, the hypercube $\{0,1\}^m$, and small perturbations thereof. 
\end{abstract}

%%%%%%%%%%%%%%%%%%%%%%%%%%%%%%%%%%%%%%%%%%%%%%%%%%%%%%%%%%%%%%%%%%%%%%%%%%%%%%%
\section{Introduction}\label{sec:intro:new}
A \defn{spanning tree} $T$ of a connected graph $G$ is a subset of edges which spans a connected graph, contains no cycles, and touches every vertex of $G$. 
The set of spanning trees of a finite connected graph $G$ is finite; hence we may consider the probability measure assigning each spanning tree equal mass.
This is known as the {\bf uniform spanning tree} ($\UST$) of $G$ denoted $\UST(G)$.
In this paper we study the distribution of the {\bf diameter} of the $\UST$, that is, the largest graph distance between two vertices.
We show that on graphs that are high-dimensional (in some precise sense given below) the diameter typically grows like the square root of the number of vertices.

When the base graph is the complete graph on $n$ vertices the limiting distribution of the diameter of the $\UST$ is typically of order $\sqrt{n}$. In fact, its limiting distribution is known~\cite{Sze83}. Moreover, the graph distance metric induced by the (properly scaled) $\UST$ converges in the Gromov--Hausdorff sense to Aldous' \emph{continuum random tree} ($\CRT$)~\cites{ACRT1,ACRT2,ACRT3}. Aldous~\cite{Ald90} was also the first to study the typical diameter of $\UST$s on general graphs and showed that if $G$ is a regular graph with a \emph{spectral gap}\footnote{The spectral gap of a graph is the difference between $1$ and the second largest eigenvalue of the transition matrix of the simple random walk on the graph.} uniformly bounded away from $0$, then the typical order of the diameter is between $\sqrt{n}/\log{n}$ and $\sqrt{n}\log{n}$ (the lower bound was later improved to $\sqrt{n}$ in~\cite{CHL12}).

Mathematical physics folklore accurately predicts that many models exhibit an {\em upper critical dimension $d_c$} above which the far away pieces of the model no longer interact.
The effect is that the geometry ``trivializes'' and to most questions on the model the answer coincides with what it would be on an infinite regular tree or the complete graph.
For the $\UST$ it is known that $d_c=4$~\cites{Pem91,BLPS01} and thus one expects that the diameter of the $\UST$ is $\sqrt{n}$ in various high-dimensional settings, such as expander graphs, finite tori of dimension $d \geq 5$, the hypercube and many more.

Indeed in these examples it can be deduced from past results (such as~\cites{Ald90, CHL12,PR04+}) that the order of the diameter is typically between $\sqrt{n}$ and $\sqrt{n}\cdot\polylog{n}$. The goal of this paper is to close the polylogarithmic gap between the upper and lower bounds, that is, to show that the diameter of the $\UST$, scaled by $n^{-1/2}$, is tight. This tightness is the first step in our future program to prove the convergence of the $\UST$ to the $\CRT$ above the critical dimension.

There is an inherent difficulty in analysing the diameter of the $\UST$.
Pemantle~\cite{Pem91} showed that the distribution of the path in the $\UST$ between two given vertices is Lawler's~\cite{Law80} {\em loop-erased random walk} ($\LERW$) between them, namely,
the simple path obtained by taking a simple random walk started at the first vertex and stopped at the second vertex, and erasing the loops of the random walk as they are created. Thus, analysing the distance between two given vertices amounts to studying the length of the $\LERW$ between them. In~\cite{PR04+}, Peres and Revelle performed a very delicate analysis of the $\LERW$ and found the precise limiting distribution of the distance (scaled by $n^{-1/2}$) between two randomly chosen vertices and showed that it matches the one given by the $\CRT$. In fact, they calculated the limiting joint distribution of the ${k\choose 2}$ distances between $k$ random vertices for any fixed $k$. However, trying to estimate the diameter from above using this result and the union bound incurs a polylogarithmic error since the corresponding events are far from disjoint. One may also try to use Wilson's algorithm~\cite{Wil96} to sample the entire $\UST$ (see \cref{sec:prelim} for the description of this algorithm). However it is unclear how to do so since the paths leading to a long diameter may be discovered at very late stages of the algorithm where it is much harder to analyse. In this paper we use a completely different technique, inspired by Peres--Revelle~\cite{PR04+} and Hutchcroft~\cite{Hut18, Hut18+}.  In particular, we use a variant of Hutchcroft's beautiful new method of sampling the $\UST$ using Sznitman's~\cite{Szn10} random interlacements process, see the proof outline in \cref{sec:proofoutline}.

\subsection{Statement of the result}
For a connected graph $G=(V,E)$ let $\drat(G)$ denote the ratio of its maximum to minimum degree.
Let $\tmix(G)$ denote the uniform mixing time of the lazy random walk on $G$ (see precise definitions in \cref{sec:prelim}).
Let $\vect{p}^t(u,v)$ denote the probability that the lazy random walk on $G$, starting from $u$, visits $v$ at time $t$, and define the \defn{bubble sum} of $G$ as follows:
\begin{equation*}
  \bub(G)
      = \sum_{t=0}^{\tmix(G)} (t+1)\sup_{v\in V} \vect{p}^t(v,v).
\end{equation*}

We will make three assumptions on $G$ with three fixed positive parameters $D$, $\alpha$ and $\theta$.
\begin{description}[style=standard,labelindent=0em,labelwidth=3em]
  \item[\namedlabel{A:bal}{(bal)}] $G$ is \defn{balanced}, namely,
    $\drat(G)\le D$.
  \item[\namedlabel{A:mix}{(mix)}] $G$ is \defn{mixing}, namely,
    $\tmix(G)  \le n^{1/2-\alpha}$.
  \item[\namedlabel{A:esc}{(esc)}] $G$ is \defn{escaping}, namely, 
    $\bub(G)\le\theta$.
\end{description}

\begin{theorem}\label{thm:main}
  For every $D,\alpha,\theta,\eps>0$ there exists $C=C(D,\alpha,\theta,\eps)$   such that if $G$ is a connected graph on $n$ vertices which satisfies \ref{A:bal}, \ref{A:mix} and \ref{A:esc} with parameters $D,\alpha,\theta$ respectively,
  then
  \begin{equation*}
    \pr\left(C^{-1}\sqrt{n}\le\diam(\UST(G))\le C\sqrt{n}\right) \ge 1-\eps \, ,
  \end{equation*}
 where $\diam(\UST(G))$ is the diameter of the $\UST$ of $G$.
\end{theorem}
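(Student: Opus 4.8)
The plan is to prove the upper and lower bounds separately, the upper bound being the substantial part. For the lower bound, fix a pair of vertices $u,v$ drawn uniformly at random. By Pemantle's theorem the path between them in $\UST(G)$ is the loop-erased random walk, and under \ref{A:bal}, \ref{A:mix}, \ref{A:esc} the Peres--Revelle analysis (or a direct second-moment computation on the length of the $\LERW$, using the escaping assumption to control intersection probabilities) shows this length is $\Theta(\sqrt n)$ with probability bounded away from $0$; hence $\diam(\UST(G)) \ge c\sqrt n$ whp for a suitable $c=c(D,\alpha,\theta)$. One has to be slightly careful to turn ``with positive probability for a random pair'' into ``whp'', but this is routine: sample several independent pairs, or observe that if the typical pairwise distance is small then a volume argument forces a contradiction with the tree having $n$ vertices.

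For the upper bound we sample the $\UST$ via the interlacements-based variant of Wilson's algorithm (Hutchcroft), which is the tool the introduction advertises. The strategy is to build the tree in two stages. First, run a bounded number $K=K(D,\alpha,\theta,\eps)$ of random-walk trajectories of length $\Theta(n)$ started from well-chosen points (or use the random interlacement process at a constant intensity), and loop-erase to obtain a subforest $\mathcal F$; the escaping and mixing hypotheses should guarantee that with probability $\ge 1-\eps/2$ every component of $\mathcal F$ has diameter $O(\sqrt n)$ and, crucially, that $\mathcal F$ is a \emph{$(\delta\sqrt n)$-net} of $G$ in the sense that from every vertex $x\in V$ the random walk started at $x$ hits $\mathcal F$ within $O(\sqrt n)$ steps with overwhelming probability (here $\delta$ is small but constant). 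Second, complete $\mathcal F$ to a full spanning tree by attaching the remaining vertices via Wilson's algorithm with the already-built forest as the root; the length of each attaching $\LERW$-branch is at most its hitting time of $\mathcal F$, which we have just controlled. Summing the diameter of $\mathcal F$ (which is $O(\sqrt n)$) and twice the maximal branch length (also $O(\sqrt n)$) gives $\diam(\UST(G)) \le C\sqrt n$.

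Concretely the key steps are: (i) record the escape/return estimates implied by \ref{A:esc} — a bounded bubble sum up to the mixing time, together with \ref{A:mix}, yields that $\mathbf p^t(v,v) = O(t^{-2})$ in the relevant range and that two independent walk segments of length $\le \varepsilon n$ have $O(1)$ expected number of intersections, which is exactly the ``$d_c=4$ trivialization''; (ii) use (i) to show the loop-erasure of a length-$\Theta(n)$ walk has length $\Theta(\sqrt n)$ and diameter $O(\sqrt n)$, via first and second moment bounds on the number of points surviving the loop-erasure in a given time window (this is the Peres--Revelle-type computation); (iii) prove the net property: because the walk mixes in $n^{1/2-\alpha}\ll \sqrt n$ steps, after $O(\sqrt n)$ steps it has visited $\Omega(\sqrt n)$ essentially-independent ``fresh'' locations, and the forest $\mathcal F$ already occupies a constant fraction of a typical such location, so the hitting time of $\mathcal F$ is $O(\sqrt n)$ uniformly with the desired probability; (iv) assemble via Wilson's algorithm / the interlacement stack, using the stochastic domination and partial-stack properties of the sampling procedure so that the second-stage branches really are independent $\LERW$s to $\mathcal F$.

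The main obstacle is step (iii), the uniform net bound: one needs that \emph{every} vertex — not just a typical one — reaches the constant-intensity interlacement/forest quickly, and the forest is itself random and not a product set, so the hitting-time estimate must be made uniform over the base vertex and robust to the conditioning inherent in building $\mathcal F$. I expect this to require a capacity lower bound for $\mathcal F$ in every ``mixing-scale ball'' together with a union bound over a net of starting points, both controlled by the bubble condition; getting the quantifiers and the conditioning right here, rather than the moment computations in steps (i)--(ii), is where the real work lies.
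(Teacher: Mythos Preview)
Your lower-bound sketch is broadly in line with the paper's; the paper makes it precise via the ``sunny network'' trick (adding a cemetery vertex so that the walk until absorption has length $O(\sqrt n)$), and in fact proves more than you state: the path $\phi$ between two random stationary vertices has, with probability $\ge 1-\eps$, not only length $\le A\sqrt n$ but also $r$-capacity $\Cap_r(\phi)\ge\chi r/\sqrt n$. This capacity lower bound is an essential input to the upper bound, and your proposal does not produce or use it.

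Your upper bound has a genuine gap, and it is precisely the gap the paper is written to close. You propose to bound each attaching branch by the hitting time of the forest $\mathcal F$ and then control the maximum. But if $|\mathcal F|\asymp\sqrt n$ then for any fixed starting point one has $\pr(\tau_{\mathcal F}>t)\asymp\exp(-ct/\sqrt n)$; a union bound over all $n$ starting vertices therefore gives $\max_v\tau_{\mathcal F}(v)=\Theta(\sqrt n\log n)$, not $\Theta(\sqrt n)$. Your step (iii), which you rightly flag as the crux, cannot be salvaged by sharper capacity estimates on $\mathcal F$: the exponential tail is the \emph{correct} tail for a hitting time, and the logarithmic loss from the union bound is intrinsic to bounding branches by hitting times. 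This is exactly the obstruction the introduction warns about (``trying to estimate the diameter from above using this result and the union bound incurs a polylogarithmic error'').

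The paper's mechanism is different. It takes $W=\phi$, the single path produced in the lower-bound step, and studies the forest $\cT_W$. The capacity lower bound on $W$ yields a bounded $W$-bubble sum $\bub_W(G)$. Using an interlacement-based stationary coupling of $\AB_W(t)$ at different times together with a conductance estimate, the paper proves a \emph{recursive} inequality
\[
  \cQ_W(\ell):=\max_u\pr\big(\height(\past^{W_u}(u))\ge\ell\big)
  \;\le\; \frac{C_2}{\ell}+\frac{1}{4}\,\cQ_W(\ell/3),
\]
which solves to $\cQ_W(\ell)\le C_3/\ell$. The point is that this $1/\ell$ tail --- rather than the exponential tail of a hitting time --- is exactly what makes the union bound over the $|W|=O(\sqrt n)$ root vertices of $\cT_W$ lose nothing: $\pr(\height(\cT_W)\ge C\sqrt n)\le C_3|W|/(C\sqrt n)=O(1/C)$. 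You invoke Hutchcroft's interlacements by name but do not use them to set up this recursion; without it, your plan reproduces the previously known $\sqrt n\cdot\polylog n$ upper bound rather than the claimed $\sqrt n$.
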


Aldous conjectured~\cite[Page 460]{Ald90} that for a larger family of graphs the $\UST$ diameter is typically $\Theta(\sqrt{n})$.
It turns out that his family of graphs is slightly too wide, as we remark in \cref{sec:sharp}, where we discuss the sharpness of the three assumptions of \cref{thm:main}.
Our assumptions \ref{A:bal}, \ref{A:mix} and \ref{A:esc}, which are very similar to those of~\cite{PR04+}, try to capture the most general notion of ``dimension greater than $4$''.

\subsection{Preliminaries}\label{sec:prelim}
For a graph $G=(V,E)$ on $n$ vertices and a vertex $v\in V$ we denote by $\d_G(v)$ its degree (or simply $\d(v)$ if $G$ is clear from the context).
Denote by $\delta(G)$, $\Delta(G)$ the graph's minimum and maximum degree, respectively, and set $\drat(G)=\Delta(G)/\delta(G)$ which is only defined when $\delta(G) > 0$.
Let $X=\langle X_t\rangle$ be the \defn{lazy random walk} on $G$, that is, at each step it stays put with probability $1/2$ and otherwise moves along a uniformly chosen edge touching it. 
Since we often consider the same random walk but with different starting distributions we use the notation $\pr_\mu$ and $\E_\mu$ for probabilities and expectations conditioned on $X_0\sim\mu$ for a given probability measure $\mu$ on the vertex set. If $\mu$ is supported on a single vertex $u$, then we write $\pr_u$ or $\E_u$ instead.
For a nonnegative integer $t$ denote $\vect{p}^t(u,v)=\pr_u(X_t=v)$.

Assume that $G$ is connected and denote by $\pi$ the \defn{stationary distribution} of the lazy random walk, namely, $\pi(v) = \d(v)/2|E|$. The \defn{uniform mixing time} of the lazy random walk on $G$ is defined by 
\begin{equation*}
  \tmix(G)  = \min\left\{
    t\ge 0: \max_{u,v\in V}
      \left|\frac{\vect{p}^t(u,v)}{\pi(v)} - 1\right|\le \frac{1}{2}
  \right\}.
\end{equation*}
It is easy to see that for any $t \geq \tmix(G)$ we still have 
$$\max_{u,v\in V}
      \left|\frac{\vect{p}^t(u,v)}{\pi(v)} - 1\right|\le \frac{1}{2} \, ,$$
indeed, this follows since $\vect{p}^t(u,v)=\sum_w \vect{p}^{t-\tmix}(u,w)\vect{p}^{\tmix}(w,v)$. Hence, if $G$ is connected and satisfies \ref{A:bal} with parameter $D$, then for any $t \geq \tmix(G)$ we have 
\begin{equation}\label{fact:mix}
     \frac{1}{2Dn} \le \pr_u(X_t=v) \le \frac{2D}{n} \, .
\end{equation}
Furthermore, if $G$ satisfies in addition \ref{A:mix} and \ref{A:esc} with parameters $\alpha,\theta$ respectively, then 
\begin{equation}\label{fact:esc}
    \sum_{t=0}^{\sqrt{n}}(t+1)\sup_{v\in V}\vect{p}^t(v,v) \le \theta+2D.
\end{equation}
We will frequently appeal to \cref{fact:mix,fact:esc}.

Let us remark that due to technical convenience our definition of $\tmix$ uses the uniform distance between $\vect{p}^t(u,\cdot)$ and $\pi(\cdot)$ rather than the total variation distance that is more commonly used in the literature. However, occasionally it will be useful to use this distance. Recall that the \defn{total variation distance} between two probability measures $\mu$ and $\nu$ on $V$ is defined by
\begin{equation*}
   \dtv(\mu,\nu)
     = \sup_{A\subseteq V}|\mu(A)-\nu(A)|
     = \frac{1}{2}\sum_{v\in V}|\mu(v)-\nu(v)|.
\end{equation*}
A useful fact is that after $k\tmix(G)$ steps the total variation distance between $\vect{p}^t(u,\cdot)$ and $\pi(\cdot)$ is at most $2^{-k}$ (see, e.g.,~\cite{LPW2}*{Section 4.5}).
In particular, $t\gg\log{n}\cdot\tmix(G)$ implies
\begin{equation}\label{eq:tmix}
  \dtv(X_t,\pi) \ll n^{-2}.
\end{equation}

We will also liberally use the following asymptotic notations for two nonnegative functions $f$ and $g$ of the number of vertices $n$.
We write $f\preceq g$ if there exists a constant $C>0$ which may depend only on $D,\alpha,\theta$ such that $f(n) \le Cg(n)$ for all $n\geq 1$.  We write $f \succeq g$ if $g \preceq f$, and $f \asymp g$ if $f\preceq g$ and $f \succeq g$. We say that $f \ll g$ if $f(n)/g(n) \to 0$ as $n\to \infty$ and $f \gg g$ if $g \ll f$, and that $f(n)\sim g(n)$ if $f(n)/g(n)\to 1$ as $n\to \infty$.
For $x,y\in\RR$ we write $x\wedge y$ for the minimum between $x$ and $y$.
For the sake of clarity of presentation, we often omit floor and ceiling signs whenever they are not crucial.
We make no attempt to optimize constants that appear in our statements and arguments.

\subsubsection*{Random walks on networks and the weighted UST}
A \defn{network} $(G,\weight)$ is a graph $G=(V,E)$ with an associated \defn{weight} function $\weight:E\to\RR^+$.
The \defn{lazy random walk} on a connected network is the random walk which, given that it is at $u$, stays put with probability $1/2$, and otherwise moves to a neighbour $v$ of $u$ with probability proportional to $\weight(\{u,v\})$.

The \defn{weighted uniform spanning tree} probability measure assigns to each spanning tree $T$ a measure proportional to its weight $\Weight(T):=\prod_{e\in E(T)}\weight(e)$.
The following classical observation about the \defn{spatial Markov property} of uniform spanning trees will be useful.

\begin{claim}\label{cl:spatial}\cite{BLPS01}*{Proposition 4.2}
	Let $(G,\weight)$ be a network and $e$ be an edge of $(G,\weight)$.
	The weighted $\UST$ of $(G,\weight)$ conditioned to have $e$ as an edge is distributed as the union of $e$ with the weighted $\UST$ of the network obtained from $(G,\weight)$ by contracting the edge $e$.
\end{claim}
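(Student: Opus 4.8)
The plan is to verify the claimed identity of probability measures by a direct weight computation, exploiting the standard correspondence between spanning trees of $G$ that contain $e$ and spanning trees of the contracted network. Write $e=\{u,v\}$ and let $G/e$ denote the network obtained from $(G,\weight)$ by identifying $u$ and $v$ into a single vertex, discarding the resulting loop(s), and retaining every other edge together with its original weight $\weight$; in particular, any parallel edges created by the contraction are kept as distinct edges, each carrying its own weight. The first step is to set up the map $T\mapsto T\sm\{e\}$ from the set of spanning trees of $G$ containing $e$ to the set of spanning trees of $G/e$ and to check that it is a bijection: if $T$ is a spanning tree of $G$ with $e\in T$, then $T\sm\{e\}$ is a spanning, connected, acyclic subgraph of $G/e$, and conversely, for any spanning tree $S$ of $G/e$ the set $S\cup\{e\}$ is a spanning tree of $G$ containing $e$; since $e$ itself becomes a loop in $G/e$ and loops belong to no spanning tree, these two operations are well defined and mutually inverse.

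The second step records the multiplicativity of the weight under this bijection. Contraction changes no edge weight other than removing $e$, so for every spanning tree $T$ of $G$ with $e\in T$ we have $\Weight(T)=\weight(e)\cdot\Weight_{G/e}(T\sm\{e\})$, where $\Weight_{G/e}$ denotes the analogous product of edge weights computed in the network $G/e$. Summing over all such $T$ and applying the bijection yields $\sum_{T\ni e}\Weight(T)=\weight(e)\cdot Z_{G/e}$, where $Z_{G/e}:=\sum_{S}\Weight_{G/e}(S)$ is the normalizing constant of the weighted $\UST$ of $G/e$, the sum running over all spanning trees $S$ of $G/e$.

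Combining the two steps finishes the proof: for any spanning tree $T$ of $G$ with $e\in T$,
\[
  \pr\bigl(\UST(G)=T \;\big|\; e\in\UST(G)\bigr)
    = \frac{\Weight(T)}{\sum_{T'\ni e}\Weight(T')}
    = \frac{\weight(e)\,\Weight_{G/e}(T\sm\{e\})}{\weight(e)\,Z_{G/e}}
    = \frac{\Weight_{G/e}(T\sm\{e\})}{Z_{G/e}},
\]
which is exactly the probability that the weighted $\UST$ of $G/e$ equals $T\sm\{e\}$. Since $T\mapsto T\sm\{e\}$ is a bijection onto the spanning trees of $G/e$, this identifies the conditional law of $\UST(G)$ given $\{e\in\UST(G)\}$ with the law of $\{e\}\cup\UST(G/e)$, as claimed. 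There is no serious obstacle here: the only points demanding care are bookkeeping ones, namely checking that the loops and parallel edges produced by the contraction are treated consistently — loops discarded, parallel edges kept separate with their weights — so that both the bijection and the weight identity above hold exactly.
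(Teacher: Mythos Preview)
Your proof is correct and is the standard direct argument for this classical fact. The paper itself does not prove this claim at all: it is stated with a citation to \cite{BLPS01}*{Proposition 4.2} and used as a black box, so there is no paper-proof to compare against.
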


Given two probability measures $\mu_1$ and $\mu_2$ on $2^E$ (that is, on subsets of the edge set $E$), we say that $\mu_1$ is \defn{stochastically dominated} by $\mu_2$ if there exists a probability measure $\mu$ on $2^E\times 2^E$ which is supported on \begin{equation*}
\left\{(T_1, T_2) \in 2^E \times 2^E \mid T_1 \subseteq T_2 \right\}
\end{equation*}
and whose marginals are $\mu_1$ and $\mu_2$. We will use the following classical result proved by Feder and Mihail~\cite{FM92} (see also~\cite{LP}*{Theorem 4.6}).

\begin{lemma}\label{lem:neg:ass}
  Let $(G,\weight)$ be a network and let $A\subseteq B$ be two sets of vertices of $G$.
  Then $\UST(G/B)$ is stochastically dominated by $\UST(G/A)$.
\end{lemma}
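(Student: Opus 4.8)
The statement to prove: for a network $(G,\weight)$ and vertex sets $A \subseteq B$, the $\UST$ of $G/B$ (contract all of $B$ to a single vertex) is stochastically dominated by the $\UST$ of $G/A$. Let me think about how to reduce this to the Feder–Mihail negative association result.

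The plan is to reduce to the case where $B = A \cup \{x\}$ differs from $A$ by a single vertex, since the general case follows by contracting one vertex at a time. So I would first set up the one-vertex reduction: write $B = A \cup \{b_1, \dots, b_k\}$ and note that $G/B = (G/A)/\{a, b_1, \dots, b_k\}$ where $a$ is the image of $A$; then it suffices to handle each single contraction. Hence assume $B = A \cup \{x\}$.

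Next, the key idea: contracting $x$ into $A$ is the same as \emph{conditioning} the $\UST$ of a modified network to contain a certain edge, via the spatial Markov property (\cref{cl:spatial}). Concretely, form the network $(G', \weight')$ from $G/A$ by adding a new edge $e$ of very large weight $w \to \infty$ between the image $a$ of $A$ and the vertex $x$ (or, cleaner, just reason about the graph $G/A$ with an auxiliary edge $e = \{a,x\}$ of weight $w$). By \cref{cl:spatial}, the weighted $\UST$ of $(G', \weight')$ conditioned on $e \in T$ is distributed as $e$ together with the $\UST$ of $(G'/e, \weight') = \UST(G/B)$ (as a random subset of $E(G)$, forgetting $e$). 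Now the content is: for a fixed edge $e$, the $\UST$ of a network conditioned to contain $e$ is stochastically dominated (on $E \setminus \{e\}$) by the $\UST$ conditioned \emph{not} to contain $e$ — this is precisely the negative association / negative correlation property of the $\UST$, which is the Feder–Mihail theorem~\cite{FM92}. Then I would let $w \to \infty$: under $\UST(G')$ the event $\{e \in T\}$ has probability tending to $1$, so the unconditioned measure $\UST(G')$ restricted to $E(G)$ converges to $\UST(G/B)$, while $\UST(G')$ conditioned on $e \notin T$, restricted to $E(G)$, converges to $\UST(G/A)$ (since on $\{e \notin T\}$ the weight of $e$ is irrelevant and $G' \setminus e = G/A$). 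Stochastic domination is preserved under these weak limits on the finite set $2^{E(G)}$, giving the claim.

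The main obstacle — and the step to state carefully — is the passage to the $w \to \infty$ limit and checking that the coupling realizing stochastic domination survives it; this is routine on a finite ground set (the set of couplings supported on $\{(T_1,T_2) : T_1 \subseteq T_2\}$ is compact and the marginal constraints are closed), but it must be spelled out. Alternatively, one can avoid limits entirely: work directly with the contracted network, observe that $\UST(G/B)$ equals $\UST(G/A)$ conditioned on a set of edges forming a spanning tree of the subnetwork induced on (the image of) $B$, and apply Feder–Mihail's negative association once per such edge, each conditioning only shrinking the tree in the stochastic order. I would likely present the cleaner ``conditioning on one edge at a time'' version, citing \cite{FM92} (equivalently \cite{LP}*{Theorem 4.6}) for the single-edge negative association, and deduce the multi-vertex statement by induction on $|B \setminus A|$.
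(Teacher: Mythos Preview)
The paper does not give its own proof of this lemma; it is stated as a classical result with a citation to Feder--Mihail~\cite{FM92} (see also~\cite{LP}*{Theorem 4.6}), so there is nothing to compare your argument against.

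Your sketch is correct, and in fact slightly more elaborate than necessary. Once you add the auxiliary edge $e=\{a,x\}$ to $G/A$ with any positive finite weight $w$, the conditional laws are \emph{exactly} what you want, not merely in the limit: $\UST(G')$ conditioned on $\{e\in T\}$ restricted to $E(G)$ is $\UST(G/B)$ (by \cref{cl:spatial}), and $\UST(G')$ conditioned on $\{e\notin T\}$ is $\UST(G'\setminus e)=\UST(G/A)$. The Feder--Mihail negative association then gives $\pr(\mathcal A\mid e\in T)\le\pr(\mathcal A)\le\pr(\mathcal A\mid e\notin T)$ for every increasing $\mathcal A$ on $E\setminus\{e\}$, which by Strassen is the desired stochastic domination. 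No $w\to\infty$ limit is needed, so the compactness discussion can be dropped. Your induction on $|B\setminus A|$ then finishes the job.

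One small inaccuracy in your alternative phrasing: $\UST(G/B)$ is not literally $\UST(G/A)$ conditioned on a fixed spanning tree of the induced subnetwork on $B$ (there need not even be edges between the $b_i$ and $a$ in $G$); the auxiliary-edge device is the right way to realise contraction as conditioning.
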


\subsubsection*{Loop-erased random walk and Wilson's algorithm}
Consider a network $(G,\weight)$.
A \defn{walk} of length $L$ on $G$ is a sequence of vertices $(X_0,\ldots,X_L)$ such that for $0\le i<L$, the pair $\left\{X_i,X_{i+1}\right\}$ is an edge of $G$.
We say that the walk is \defn{closed} if $X_L=X_0$.
A \defn{path} (or a \defn{simple path}) is a walk which does not repeat a vertex.
Let $X=(X_0,\ldots,X_L)$ be a walk on $G$.
For a set $I\subseteq\NN$, denote $X[I]=\langle X_i\rangle_{i\in I\cap[0,L]}$, where by $X[a,b]$ we mean $X[I]$ for $I$ which is the integer interval $[\ceil{a},\floor{b}]$ (or $X[a,b)$ if we wish to exclude $b$).
The \defn{loop-erasure} of $X$, first introduced by Lawler~\cite{Law80}, denoted by $\LE(X)$, is defined as follows.
Set $\LE(X)_0 = X_0$.
For every $j\ge 1$, given $\LE(X)[0,j-1]$ we let $i = \max\{t \mid X_t = \LE\left(X\right)_{j-1}\}.$ If $i<L$, we set $\LE(X)_j = X_{i+1}$. Otherwise, $\LE(X) = \LE(X)[0,j-1]$.
In other words, we walk along the trace of $X$ and each time we close a loop we erase all edges of this loop.
If $\langle X_t \rangle $ is the lazy random walk on $(G,\weight)$ starting at $u$ and terminating when first hitting $v$, then $\LE(X)$ is called the \defn{loop-erased random walk} ($\LERW$) from $u$ to $v$.

The loop-erased random walk is fundamental for the study of the $\UST$. In 1991, Pemantle~\cite{Pem91} showed that the unique $\UST$ simple path between two vertices $u,v \in V$ is distributed as the loop-erased random walk 
from $u$ to $v$.
Wilson~\cite{Wil96} showed that the $\UST$ can be sampled by the following random procedure known today as \defn{Wilson's algorithm}. We order the vertices of $G$ in some arbitrary way $v_1, v_2,\ldots,v_n$. Let $\sT_1$ be the graph containing only $v_1$ and no edges. We now define $\sT_i$ recursively.
Given any $i\geq 2$ and $\sT_{i-1}$ we run a loop-erased random walk from $v_i$ to $\sT_{i-1}$ and obtain $\sT_i$ by appending that loop-erasure to $\sT_{i-1}$.
The \defn{times contributing to the loop-erasure} of the walk $X$ are denoted by $\langle \lambda_k(X)\rangle_{k=0}^{|\LE(X)| - 1}$ and are defined recursively by $\lambda_0(X) = 0$ and $\lambda_{k+1}(X) = 1 + \max\{t : X_t = X_{\lambda_k}\}$. 

\subsubsection*{Aldous--Broder Algorithm}\label{sec:ab}
Aldous and Broder~\cites{Ald90,Bro89}, independently, gave an algorithm to sample the $\UST$ of a connected network using random walks. Choose an arbitrary vertex $v\in V$, and start a (lazy) simple random walk from this vertex. For any vertex $u\in V \sm \{v\}$ denote by $e_v(u)$ the first edge touching $u$ in this random walk. Then the random edge set
\begin{equation*}
\{e_v(u) \mid v\in V\sm\{v\}\}
\end{equation*}
is distributed as $\UST(G)$, see~\cites{Ald90,Bro89}.

\subsubsection*{Hitting times and capacity}
Consider a network $(G,\weight)$ and the lazy random walk $X=\langle X_t\rangle$ on it.
For a nonempty vertex set $U\subseteq V$ we call the random variable
\begin{equation*}
  \tau_U = \min\left\{t\ge 0:X_t\in U\right\}
\end{equation*}
the \defn{hitting time} of $U$.  When $U=\{v\}$ we simply write $\tau_v$. For an integer $r\ge 0$, the \defn{$r$-capacity} of $U$ is the probability that a random walk starting from a stationary vertex will hit $U$ in less than $r$ steps, namely
\begin{equation}\label{eq:cap}
  \Cap_r(U) = \pr_\pi(\tau_U < r).
\end{equation}
Note that by the union bound,
\begin{equation}\label{eq:cap:ub}
  \Cap_r(U) \le r\pi(U).
\end{equation}
Also note that for any $U$ we have that $\Cap_r(U)$ is nondecreasing and subadditive in $r$.
The following fact will be useful.
\begin{claim}\label{cl:cap:mix}
  Let $u\in V$, let $U\subseteq V$ be nonempty,
  let $r\gg\log(n)\cdot\tmix(G)$ and let $t\sim r$.
  Then, for large enough $n$,
  \begin{equation*}
    \pr_u(\tau_U<t) \ge \frac{1}{3}\Cap_r(U).
  \end{equation*}
\end{claim}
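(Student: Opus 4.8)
The plan is to run the lazy walk from $u$ for a short ``burn-in'' phase that brings it close to stationarity, and only afterwards to start counting visits to $U$: once the walk is near $\pi$, its chance of hitting $U$ within the time that remains can be compared with the corresponding stationary probability, which is essentially a capacity. Concretely, I would fix an integer $s$ with $\log(n)\cdot\tmix(G)\ll s\ll r$ — possible precisely because the hypothesis gives $r\gg\log(n)\tmix(G)$, a concrete choice being $s=\ceil{\sqrt{r\log(n)\tmix(G)}}$. Throwing away any visit to $U$ before time $s$ and applying the Markov property at time $s$,
\begin{equation*}
  \pr_u(\tau_U<t)\ \ge\ \pr_u\big(X_i\in U\text{ for some }i\in[s,t)\big)\ =\ \pr_{\mu}(\tau_U<t-s),\qquad \mu:=\vect{p}^s(u,\cdot).
\end{equation*}

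Next I would compare the $\mu$-started walk with the $\pi$-started one. Since $w\mapsto\pr_w(\tau_U<t-s)$ is bounded by $1$, the quantities $\pr_\mu(\tau_U<t-s)$ and $\pr_\pi(\tau_U<t-s)=\Cap_{t-s}(U)$ differ by at most $2\dtv(\mu,\pi)$, and since $s\gg\log(n)\tmix(G)$ the bound \eqref{eq:tmix} yields $\dtv(\mu,\pi)\ll n^{-2}$; hence $\pr_u(\tau_U<t)\ge\Cap_{t-s}(U)-o(n^{-2})$. To finish, I would bound $\Cap_{t-s}(U)$ below by $\tfrac12\Cap_r(U)$: because $t\sim r$ and $s\ll r$ one has $t-s\ge r/2$ for large $n$, and $\Cap_\bullet(U)$ is nondecreasing and subadditive in its argument (as noted after \eqref{eq:cap:ub}), so $\Cap_{t-s}(U)\ge\Cap_{\ceil{r/2}}(U)\ge\tfrac12\Cap_r(U)$. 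On the other hand, since $U\neq\es$ and $r\ge1$ for large $n$, one has $\Cap_r(U)\ge\pr_\pi(X_0\in U)=\pi(U)\succeq 1/n$ by \ref{A:bal}, which dwarfs the $o(n^{-2})$ error; so that error is at most $\tfrac16\Cap_r(U)$ for large $n$, and altogether $\pr_u(\tau_U<t)\ge\tfrac12\Cap_r(U)-\tfrac16\Cap_r(U)=\tfrac13\Cap_r(U)$.

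I do not expect a real obstacle here; the only point that needs care is the calibration of the burn-in length $s$. On the one hand $s$ must be large enough ($\gg\log(n)\tmix$) that the total-variation error after mixing is $o(n^{-2})$ rather than merely $O(1)$ — note that \eqref{fact:mix} alone would not suffice, it is the geometric decay of total variation in multiples of $\tmix$ that is used. On the other hand it is essential that $s\ll r$, not merely $s\le(1-\delta)r$: it is precisely because $t-s$ is then still asymptotically $r$ that the capacity loses only the factor $\tfrac12$, which is what leaves enough room for the stated constant $\tfrac13$. The remaining ingredient, $\Cap_r(U)\gg n^{-2}$ (needed to discard the mixing error), is immediate from $\Cap_r(U)\ge\pi(U)$ and nonemptiness of $U$.
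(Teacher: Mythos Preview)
Your proposal is correct and follows essentially the same approach as the paper's own proof: choose a burn-in time $s$ with $\log(n)\tmix(G)\ll s\ll r$, use mixing (via \eqref{eq:tmix}) to replace the starting point by a stationary one at a cost of $o(n^{-2})$, and then invoke monotonicity and subadditivity of $\Cap_\bullet(U)$ to get $\Cap_{t-s}(U)\ge\tfrac12\Cap_r(U)$, absorbing the error using $\Cap_r(U)\ge\pi(U)\gg n^{-2}$. The only cosmetic differences are that the paper phrases the mixing step as a coupling and proves the capacity halving for general $t'\sim r$ rather than via $t-s\ge r/2$.
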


\begin{proof}
	First we show that if $t'\sim r$ then 
	\begin{equation}\label{eq:half:cap}
	\Cap_{t'}(U) = \pr_{\pi}(\tau_U \leq t') \geq \frac{1}{2}\Cap_r(U).
	\end{equation}
	If $t' \geq r$ this follows from the monotonicity of the capacity.
  Assume that $t'<r$ and write $t' + m = r$. 
	By the subadditivity of the capacity we have
  \begin{equation*}
    \Cap_r(U) \leq \Cap_{t'}(U) + \Cap_m(U).
  \end{equation*}
	We may assume that $m\leq t'$, hence by monotonicity $\Cap_m(U) \leq \Cap_{t'}(U)$ and \eqref{eq:half:cap} follows.
  Let $s$ be an integer satisfying $\log(n)\cdot\tmix(G) \ll s \ll r$.
  Since $t-s\sim r$ we have that $\dtv(X_s,\pi)\ll n^{-2}$ so that we can couple (see, e.g.,~\cite{LPW2}*{Proposition 4.7}) $X_s$ with a vertex drawn according to the stationary distribution so that the coupling fails with probability at most $n^{-2}$. Thus, using \eqref{eq:half:cap} and the fact that $\Cap_r(U) \geq \pi(U) \gg n^{-2}$ we obtain that
	\begin{align*}
    \pr_u(\tau_U < t) 
    &\ge \pr_u(X[s,t) \cap U \neq \es)
    \ge \Cap_{t-s}(U) - \dtv (X_s, \pi)
     \ge \frac{1}{3}\Cap_r(U).\qedhere
	\end{align*}
\end{proof}

\subsection{High dimensional graphs}
We quickly review some notable examples of high dimensional graphs which satisfy \ref{A:bal}, \ref{A:mix} and \ref{A:esc} with universal parameters.

\paragraph{Balanced expanders}
An expander is a graph with bounded spectral gap.
The uniform mixing time of expanders is therefore logarithmic (see, e.g.,~\cite{LPW2}*{Theorem 12.4}).
In balanced expanders we have, by~\cite{LPW2}*{Eq.~(12.13)}, that $\vect{p}^t(u,v)$, for any two vertices $u,v$ and $t\ge 0$, is bounded from above by $c_1 n^{-1} + c_2 e^{-c_3t}$, for some constants $c_1,c_2,c_3>0$, from which \ref{A:esc} is immediately deduced.

\paragraph{Tori of dimension $5$ and above}
The mixing time of a $d$-dimensional tori on $n$ vertices is at most of order $n^{2/d}$ (see, e.g.,~\cite{LPW2}*{Theorem 5.6}), so for $d\ge 5$ we have \ref{A:mix}.
In addition, for every vertex $v$ and $t\leq \tmix$, we have $\vect{p}^t(v,v)\leq Ct^{-d/2}$ for some constant $C>0$ (see, e.g.,~\cite[Theorem 2.3]{DSC94}), hence \ref{A:esc} follows.

\paragraph{The hypercube}
The mixing time of the hypercube on $n$ vertices is $\asymp\log{n}\cdot\log{\log{n}}$ (see, e.g.,~\cite{LPW2}*{Section 5.3.1}).
In addition, in the nonlazy random walk $\vect{p}^6(u,u)\leq C\log^{-3}{n}$ for some constant $C>0$ and $\vect{p}^{2t+1}(v,v) = 0$ for every integer $t$.
Since in the nonlazy random walk $\vect{p}^{2t}(v,v)$ is nonincreasing in $t$ and the probability that the lazy random walk of length $t$ makes less than $6$ nonlazy steps decays exponentially in~$t$, we obtain \ref{A:esc}.

\subsection{Sharpness of \cref{thm:main}}\label{sec:sharp}
It is easy to construct a graph which does not satisfy the 
assumptions of \cref{thm:main} but whose $\UST$ is of diameter of order $\sqrt{n}$, for example, the graph formed by connecting two disjoint cliques on $n/2$ vertices by a single edge.
Nevertheless,
the following examples demonstrate that relaxing any of the assumptions of \cref{thm:main} may result in a $\UST$ with diameter which is not of order $\sqrt{n}$.

Let $H$ be a $3$-regular expander on $n/\log{n}$ vertices and consider the graph $G$ obtained by attaching a simple path of length $\log{n}$ to each of the vertices of $H$.
This graph satisfies \ref{A:bal} and \ref{A:mix} but not \ref{A:esc}.
The diameter of a spanning tree of this graph is $2\log{n}$ plus the diameter of the tree restricted to $H$.
Since the order of the diameter of $\UST(H)$ is typically $\sqrt{n/\log(n)}$ this will be the order of the diameter of $\UST(G)$.
The necessity of \ref{A:bal} is justified by looking at the star graph which has diameter $2$ with probability $1$ and satisfies both \ref{A:mix} and \ref{A:esc}.
To see that \ref{A:mix} is necessary, just consider the path on $n$ vertices (In this example \ref{A:esc} does not hold as well. We do not know of an example in which \ref{A:bal} and \ref{A:esc} both hold, but not \ref{A:mix}). \\

A slightly different attempt to capture the high-dimensionality assumptions which guarantee mean field behaviour is due to Aldous~\cite[Page 460]{Ald90}. Recall that $\tau_v$ is the hitting time of a vertex $v$. 
For a vertex $u\in V$, let
\begin{equation*}
\target^u = \sum_{v\in V}\pi(v)\E_u(\tau_v)
\end{equation*}
be the \defn{target time} of $u$.  It can be shown (see, e.g., 
\cite{LPW2}*{Lemma 10.1}) that it does not depend on the choice of $u$. 
The \defn{relaxation time} of $G$ is the inverse of its spectral gap 
and is related to the mixing time and to the Cheeger constant of the graph (see, e.g.,~\cite{LPW2}*{Theorem 12.4}).

Aldous conjectured that for every regular graph $G$ on $n$ vertices with linear target time and polylogarithmic relaxation time
the expected diameter of $\UST(G)$ is of order $\sqrt{n}$.
However, the class of graphs satisfying these conditions turns out to be slightly too large, as we show in the following counterexample.

Let $\Gamma_n$ be the graph obtained from $\mathbb{Z}_{\log{n}}^4$ by taking two distinguished nonadjacent vertices which will be chosen later,
removing $6$ edges emanating from each of them and adding a perfect matching between the $6$ other endpoints of the removed edges. Let $G$ be obtained from a $4$-regular $n$-vertex transitive expander $H$ by replacing each edge of $H$ with a copy of $\Gamma_n$ and mapping the endpoints of the edge to the two distinguished vertices of $\Gamma_n$.
Note that $G$ is $8$-regular.
We first show that this graph satisfies the assumption of Aldous and then estimate the diameter of $\UST(G)$ using a result of Schweinsberg~\cite{Sch09} and some basic properties of expanders. This graph has polylogarithmic expansion hence by Cheeger's inequality (see~\cite{LPW2}*{Theorem 13.10}) we obtain that the relaxation time is also polylogarithmic. To bound the target time, we recall that both in a regular expander and in the $4$-dimensional torus~\cite{LPW2}*{Proposition 10.21}, the effective resistance between any two vertices is bounded above by a constant.
A moment's reflection shows that the effective resistance in $G$ between any two vertices is also bounded by a constant. Therefore, by the commute time identity~\cite{LPW2}*{Proposition 10.7}, for every $u,v\in V(G)$ we have that $\E_u[\tau_v]$ is at most linear in the number of vertices of $G$ and therefore the target time is at most linear as well.

We will now estimate from below the diameter of $\UST(G)$. 
There exists a coupling of $\UST(H)$ and $\UST(G)$ such that if we condition on $\{e_1,\ldots,e_k\} \subseteq \UST(H)$, then the subtrees obtained by restricting $\UST(G)$ to each of the copies of $\Gamma_n$ associated with the edges $\{e_i\}_{i=1}^k$ are i.i.d.\ with distribution $\UST(\Gamma_n)$. We condition on the edges of $\UST(H)$. By~\cite{PR04+} and using this coupling, the path between two typical vertices in $\UST(G)$ passes through $\asymp \sqrt{n}$ copies of $\Gamma_n$ with high probability. Using~\cite{Sch09}, there exist two vertices in $\Gamma_n$ such that the typical distance between them in $\UST(\Gamma_n)$ is of order $\log^2{n}(\log{\log{n}})^{1/6}$ with probability bounded away from $0$. These are the two points we choose as the distinguished points of $\Gamma_n$ in the construction described in the previous paragraph. By standard large deviations for i.i.d.~variables (e.g., Chernoff bound) we have that with high probability in a constant fraction of the copies of $\Gamma_n$ along the path between two typical vertices of $\UST(G)$ the distance between the two distinguished vertices of $\Gamma_n$ is of order $\log^2{n}(\log{\log{n}})^{1/6}$. This yields that the typical distance between two vertices in $G$ is with high probability of order at least $\sqrt{n\log^4{n}}(\log{\log{n}})^{1/6}$, rather than the conjectured order of $\sqrt{n\log^4{n}}$.

\subsection{Proof outline}\label{sec:proofoutline}
The first step in our proof, performed in \cref{sec:lb}, is to sample the $\UST$ path $\phi$ between two random vertices $u$ and $v$ and prove that it is typically of length of order $\sqrt{n}$ and that it is not too ``tightly packed''.
This is the statement of \cref{thm:lb}.
The lower bound on the length of $\phi$ immediately gives the required lower
bound on the diameter of \cref{thm:main}. A difficulty that arises is that
the typical length of the random walk's path between the endpoints of $\phi$
is of order $n$, while the length of its loop-erasure is typically much
shorter, namely, of order $\sqrt{n}$.
This was dealt with in~\cite{PR04+} and we borrow their idea as follows. We extend our graph by adding an extra vertex which we call the ``sun'' and connect it to every vertex of the graph with a weighted edge. It is useful since the forest obtained from the $\UST$ of the ``sunny'' graph by removing any edge touching the sun is stochastically dominated by the $\UST$ of the original graph. The time it takes the random walk to visit the sun is geometric with mean of order $\sqrt{n}$. We run a random walk from $u$ until it hits the sun and take its loop-erasure; then we take another random walk from $v$ until it hits the first loop-erasure and argue that these two paths are unlikely to meet at the sun. Thus we are able to sample the path $\phi$ by running random walks of length at most $\sqrt{n}$ and use the domination property to bound the length of $\phi$ from above and below. 

We also need to show that with high probability $\phi$ is sufficiently ``spread out''. This will be useful for the upper bound on the diameter, see below.
We will prove that the $\r$-capacity of $\phi$, when $\tmix(G)\ll r \ll \sqrt{n}$, is of order $rn^{-1/2}$ with high probability, i.e., it has a lower bound which matches the upper bound given in \eqref{eq:cap:ub} up to multiplicative constants. This roughly means that $\phi$ looks like a random set of vertices. 
The path $\phi$ is of length $\sqrt{n}$ which is much longer than the mixing time (by \ref{A:mix}), hence different chunks of $\phi$ are almost independent of each other and occur roughly at uniform and independent locations in $G$. We carefully study the structure of the cutpoints of $\phi$ --- these typically take a positive fraction of $\phi$ (as we expect in dimensions above $4$) --- so they alone suffice to constitute the lower bound on the capacity.

For the upper bound we condition on $\phi$ and consider the order $\sqrt{n}$ trees hanging on it in the $\UST$. We prove that the probability that each tree reaches height $\ell$ is of order at most $\ell^{-1}$. Taking $\ell = A \sqrt{n}$ for some large constant $A$ together with the union bound immediately gives the upper bound on the diameter. To show the $\ell^{-1}$ estimate above we use a variant of Hutchcroft's~\cite{Hut18} construction of the wired uniform spanning forest where the set $\phi$ plays the role of ``$\infty$''. Recall that the Aldous--Broder algorithm (see \cref{sec:ab}) ran on the trace of an infinite random walk starting from $\phi$ (which we contracted to a single vertex) outputs a $\UST$ of the graph. We may simulate this random walk by considering a Poisson process on the space $\cW\times\RR$ where $\cW$ is the space of random walk excursions from $\phi$ to itself. The Aldous--Broder algorithm performed on the concatenation of the excursions which occur after a fixed time $t\in \RR$ has the distribution of the $\UST$. Thus we have constructed a stationary process distributed as the $\UST$ at each time. This construction allows us to produce a recursive inequality similar to~\cite{Hut18+} and in the spirit of~\cite{KN09} which yields the required $\ell^{-1}$ bound, see the details in \cref{sec:recursiveinequality}. The fact that $\phi$ is sufficiently spread-out, is crucially used throughout the proof. For example, it implies that the length of an excursion from $\phi$ to itself is comparable to the length of its loop-erasure.

%%%%%%%%%%%%%%%%%%%%%%%%%%%%%%%%%%%%%%%%%%%%%%%%%%%%%%%%%%%%%%%%%%%%%%%%%%%%%%%
\section{Lower bound}\label{sec:lb}
The goal of this section is to prove the lower bound in \cref{thm:main}.
To do so we will show that the unique simple path in the $\UST$ between two independent stationary points is of order at least $\sqrt{n}$.
In fact, for proving the upper bound (in \cref{sec:ub}) it will be of importance to show that this path also has large capacity (meaning, informally, that it is not too ``ball-like''), but that it is not of order larger than $\sqrt{n}$.
This path is distributed as the $\LERW$ between these vertices.
A difficulty in analysing this $\LERW$ is that typically most of the original walk, which is of linear length, does not survive the loop-erasure. We therefore begin by analysing loop-erasures of much shorter random walks in which a positive fraction of the walk typically survives the loop erasure. For that, a couple of definitions and claims about properties of short $\LERW$s are needed.
 
Throughout this section, $G=(V,E)$ is a connected graph on $n$ vertices satisfying \ref{A:bal}, \ref{A:mix} and \ref{A:esc} with parameters $D,\alpha,\theta$, respectively. We assume that the number of vertices $n$ is large enough.
Define the \defn{buffer time} $\s$ and the \defn{run time} $\r$ to be
\begin{equation}\label{eq:rs}
  \s:=n^{1/2-2\alpha/3}, \qquad \r:= n^{1/2-\alpha/3},
\end{equation}
which we will use in the rest of this paper.
Note that $\s\ll\r\ll\sqrt{n}$ and $\s\gg\log(n)\cdot\tmix(G)$.
Let us now state the main theorem of this section.
\begin{theorem}\label{thm:lb}
  For every $\eps>0$ there exist $\chi,A>0$ depending only on $D,\alpha,\theta,\eps$ such that the following holds.
  Let $u,v$ be two independent stationary points in $G$, and let $\phi$ be the unique simple path between them in $\UST(G)$.
  Then,
  \begin{equation*}
    \pr\left(|\phi|\le A\sqrt{n}\pand \Cap_\r(\phi) \ge \chi\r n^{-1/2}\right) \ge 1-\eps.
  \end{equation*}
\end{theorem}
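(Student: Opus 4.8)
The plan is to sample $\phi$ via the ``sunny graph'' of Peres--Revelle, realising it (through the domination property described in the proof outline) as a concatenation of the loop-erasures of two random walks of length $\asymp\sqrt n$; the bound $|\phi|\le A\sqrt n$ is then read off from the walk lengths, while $\Cap_\r(\phi)\succeq\r n^{-1/2}$ — and, via \eqref{eq:cap:ub} and \ref{A:bal}, the matching lower bound $|\phi|\succeq\sqrt n$ needed in \cref{thm:main} — come from the cutpoint structure of a loop-erased walk.

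\emph{The sunny graph and sampling.} Adjoin to $G$ a vertex $\ast$ joined to every $v\in V$, with the weights tuned so that the lazy walk on the resulting network $\sun{G}$ steps to $\ast$ at each time with probability $\asymp n^{-1/2}$; then its hitting time $\tau_\ast$ of $\ast$ is geometric of mean $\asymp\sqrt n$ and, for a suitable $A=A(\eps)$, lies in $[\sqrt n,A\sqrt n]$ with probability $\ge1-\eps$ from any start. The domination property is: conditioning $\UST(\sun{G})$ on the set $B$ of neighbours of $\ast$, its restriction to $E(G)$ is $\UST(G/B)$ by \cref{cl:spatial}, which by \cref{lem:neg:ass} (with a single vertex of $B$ in the role of $A$, so that $\UST(G/A)=\UST(G)$) is stochastically dominated by $\UST(G)$; averaging over $B$, the forest obtained from $\UST(\sun{G})$ by deleting the $\ast$-edges is stochastically dominated by $\UST(G)$, and I fix a monotone coupling. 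Now run Wilson's algorithm for $\UST(\sun{G})$ rooted at $\ast$, processing $u$ then $v$: this yields $\gamma_1=\LE(X^u)$, for $X^u$ the lazy walk from $u$ stopped at $\ast$, and $\gamma_2=\LE(X^v)$, for $X^v$ the lazy walk from $v$ stopped on $\gamma_1$ (which contains $\ast$). On the event $E$ that $X^v$ reaches $\gamma_1\sm\{\ast\}$ before $\ast$, the $u$--$v$ path in $\UST(\sun{G})$ is $\gamma_1[u..z]\cup\gamma_2$ for the attaching vertex $z$, avoids $\ast$, and hence equals $\phi$ under the coupling; so on $E$ we get $|\phi|\le|X^u|+|X^v|$ and $\Cap_\r(\phi)\ge\Cap_\r(\gamma_2)$ by monotonicity of $\Cap_\r$ in the set.

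\emph{Spread-out estimate.} The main ingredient is that a short loop-erased walk is ``spread out'': for $\eta=\LE(Y[0,m])$ with $Y$ the lazy walk from a stationary vertex and $m\asymp\sqrt n$, one has $\#\eta\succeq\sqrt n$ and $\Cap_\r(\eta)\ge\chi\r n^{-1/2}$ with probability $\ge1-\eps$, for some $\chi=\chi(D,\alpha,\theta,\eps)>0$. Since a time $t$ with $Y[0,t]\cap Y(t,m]=\es$ survives the loop-erasure, $\eta$ contains all such cutpoints; the escaping condition \ref{A:esc}, via \eqref{fact:esc} (which bounds by a constant the expected number of ``past-meets-future'' collisions of $Y$ up to time $\asymp\sqrt n$), gives by a first/second-moment computation that a positive proportion of $t\in[\tfrac m3,\tfrac{2m}3]$ are cutpoints, so $\eta$ contains a set $S$ with $\#S\succeq\sqrt n$ with probability $\ge1-\eps$. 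Conditionally on $S$, one bounds $\Cap_\r(\eta)\ge\Cap_\r(S)=\pr_\pi(\tau_S<\r)$ from below by Paley--Zygmund applied to the number of visits to $S$ made by a stationary $\r$-step walk: its first moment is $\asymp\r\,\#S/n\asymp\r n^{-1/2}$ by \eqref{fact:mix}, and its second moment is tamed because, by \ref{A:mix} and the choice of buffer time $\s=n^{1/2-2\alpha/3}\gg\log n\cdot\tmix(G)$, disjoint time-windows of $Y$ decorrelate, so $S$ is genuinely spread over $G$ rather than clustered where it is easy or hard to hit.

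\emph{Putting it together.} Applying the spread-out estimate to $X^u$, with probability $\ge1-\eps$ the path $\gamma_1$ is spread out; then \cref{cl:cap:mix}, used uniformly over starting vertices and iterated over $\asymp\sqrt n/\r$ disjoint time-blocks of length $\asymp\r$, shows $X^v$ hits $\gamma_1\sm\{\ast\}$ within $\asymp\sqrt n$ steps with probability $\ge1-\eps$; taking the $\ast$-weight small enough (i.e.\ $A$ large enough) so that $\tau_\ast^{(v)}$ exceeds this many steps with probability $\ge1-\eps$, the event $E$ then has probability $\ge1-3\eps$, and on $E$ one has $|\gamma_2|\preceq\sqrt n$ and, by the spread-out estimate applied to $X^v$ (mildly conditioned on the high-probability event just established), $\Cap_\r(\gamma_2)\ge\chi\r n^{-1/2}$. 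Intersecting the good events gives $|\phi|\le A\sqrt n$ and $\Cap_\r(\phi)\ge\chi\r n^{-1/2}$; absorbing the negligible event $u=v$ (of probability $\asymp1/n$) and rescaling $\eps$ proves \cref{thm:lb}.

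\emph{Main obstacle.} The hard part is the spread-out estimate: one must show not merely that a length-$\asymp\sqrt n$ loop-erased walk keeps $\asymp\sqrt n$ vertices, but that these are distributed densely enough over $G$ for $\Cap_\r\succeq\r n^{-1/2}$ — the best possible by \eqref{eq:cap:ub} — to hold \emph{with high probability} rather than merely in expectation. This requires a quantitative analysis of the walk's cutpoints — their abundance, where \ref{A:esc} enters, and their near-independence over well-separated time-windows, where \ref{A:mix} enters — fed into a second-moment control of $\Cap_\r(\eta)$ over the randomness of $\eta$; this in turn must be balanced against the requirement that $X^v$ meet $\gamma_1$ before the sun, which pushes in the opposite direction and is handled by the tuning of the $\ast$-weight.
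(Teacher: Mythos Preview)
Your overall strategy matches the paper's: sunny network, Wilson rooted at the sun, a monotone coupling identifying the $\sun G$-path with $\phi$ when it avoids the sun, then length and capacity read off from the two constituent loop-erasures. The domination you derive from \cref{cl:spatial} and \cref{lem:neg:ass} is a clean alternative to the subnetwork-monotonicity fact the paper cites.

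The gap is in the spread-out estimate. You claim that \eqref{fact:esc} plus ``a first/second-moment computation'' shows that a positive proportion of $t\in[m/3,2m/3]$ are cutpoints of $Y[0,m]$ when $m\asymp\sqrt n$. But \eqref{fact:esc} only bounds the \emph{expected} number $\E[N_t]$ of past--future collisions across a fixed $t$ by a constant; it does not give $\Pr(N_t=0)\succeq 1$, which is what a first-moment lower bound on the cutpoint count needs. The paper's ``last-intersection'' argument (\cref{cl:escape}) does prove $\Pr(N_t=0)\succeq 1$, but only for walks of length $r\ll\sqrt n$: its first step rules out intersections with $\max\{i,j\}>r$ via a union bound of size $\asymp r^2/n$, and for $m\asymp\sqrt n$ this is $\Theta(1)$, so the argument breaks. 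Likewise, your Paley--Zygmund bound on $\Cap_r(S)$ needs second-moment control on visits to $S$ that depends on the short-time clustering of cutpoints, which you do not carry out.

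The paper therefore never works with global cutpoints of a $\Theta(\sqrt n)$-long walk. It takes the sun weight small (parameter $\beta\to 0$), chunks only a $\beta\sqrt n$-long segment into runs $B_i$ of length $r$ separated by $s$-buffers, applies \cref{cl:escape} \emph{within each run} to get \cref{cl:cap:r}, rules out cross-run loops on that segment (probability $O(\beta^2)$ since the segment is short), and sums the run capacities against pairwise closeness (\cref{cl:close,cl:cap:sqrt}). Lifting this from a short segment to the full walk $X^u$ (whose length has mean $\sqrt n/\beta^2$) then requires a separate two-case analysis depending on whether the prefix already has large capacity (\cref{cl:pathtosun}); and the treatment of $X^v$ (\cref{cl:pathtopath}) additionally needs a \emph{lower} bound on $|X^v|$, obtained from the upper bound on $|\gamma_1|$, before the segment estimate can be applied. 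These are the steps your sketch elides.
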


We remark that the lower bound on the capacity of $\phi$ in \cref{thm:lb} gives, in view of~\eqref{eq:cap:ub}, the needed lower bound on its length.
The next two claims show that two independent short random walks starting from the same stationary vertex have positive probability to never intersect.
\begin{claim}\label{cl:nonatt}
  Let $u$ be a stationary vertex and let $X,Y$ be two independent lazy random walks starting at $u$.
  Then, for every vertex~$v$,
  \begin{equation*}
    \E\left[ \left| \left\{
      (i,j): 0\le i,j\le \r\pand X_i=Y_j=v
    \right\} \right| \right]
    \preceq n^{-1}.
  \end{equation*}
\end{claim}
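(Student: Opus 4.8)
The plan is to compute the expectation in closed form and recognise it as a truncated ``bubble sum'' that is controlled by assumption \ref{A:esc}. First I would condition on the common starting vertex: since $X$ and $Y$ are independent given $X_0=Y_0$, and $X_0\sim\pi$,
\[
  \E\!\left[\left|\{(i,j):0\le i,j\le\r,\ X_i=Y_j=v\}\right|\right]
  =\sum_{u\in V}\pi(u)\!\left(\sum_{i=0}^{\r}\vect{p}^i(u,v)\right)^{\!2}
  =\E_\pi\!\left[S(u)^2\right],
\]
where $S(u):=\sum_{i=0}^{\r}\vect{p}^i(u,v)$, so that the task reduces to showing $\E_\pi[S(u)^2]\preceq n^{-1}$.

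Next I would expand the square and use reversibility of the lazy random walk with respect to $\pi$, which gives $\pi(u)\vect{p}^i(u,v)=\pi(v)\vect{p}^i(v,u)$ for every $i$, together with the Chapman--Kolmogorov identity $\sum_u\vect{p}^i(v,u)\vect{p}^j(u,v)=\vect{p}^{i+j}(v,v)$. This yields
\[
  \E_\pi[S(u)^2]
  =\sum_{u\in V}\pi(u)\sum_{i,j=0}^{\r}\vect{p}^i(u,v)\vect{p}^j(u,v)
  =\pi(v)\sum_{i,j=0}^{\r}\vect{p}^{i+j}(v,v)
  \le\pi(v)\sum_{k=0}^{2\r}(k+1)\vect{p}^{k}(v,v),
\]
using that for each $k$ there are at most $k+1$ pairs $(i,j)$ with $i+j=k$.

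To finish I would invoke the two structural assumptions. Since $\r=n^{1/2-\alpha/3}$ we have $2\r\le\sqrt n$ for $n$ large, so $\sum_{k=0}^{2\r}(k+1)\vect{p}^{k}(v,v)\le\sum_{k=0}^{\sqrt n}(k+1)\sup_{w\in V}\vect{p}^{k}(w,w)\le\theta+2D$ by \eqref{fact:esc}; and $\pi(v)=\d(v)/2|E|\le\drat(G)/n\le D/n$ by \ref{A:bal} (using $2|E|\ge n\delta(G)$). Hence $\E_\pi[S(u)^2]\le D(\theta+2D)/n\preceq n^{-1}$, as claimed.

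The only real content is the first two displays: once one sees that, after averaging over the stationary start and applying reversibility, the second moment of the occupation profile $\sum_i\vect{p}^i(u,v)$ collapses to a weighted sum of return probabilities of exactly the form appearing in \ref{A:esc}, the remainder is an immediate application of \eqref{fact:esc} and \ref{A:bal}, so I do not anticipate any substantial obstacle. (Note that the proof uses only $\r\ll\sqrt n$, so the statement holds with $\r$ replaced by any such quantity, which will be convenient later.)
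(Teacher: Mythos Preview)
Your proof is correct and follows essentially the same approach as the paper: both average over the stationary start, use reversibility to collapse $\sum_u\pi(u)\vect{p}^i(u,v)\vect{p}^j(u,v)$ into a return probability $\vect{p}^{i+j}(v,v)$ weighted by $\pi(v)$ (or its bound $D/n$), and then invoke \eqref{fact:esc}. The only cosmetic difference is that you keep the exact factor $\pi(v)$ through the reversibility step and bound it at the end, while the paper replaces $\pi(v)$ by $D/n$ immediately; both arrive at $D(\theta+2D)/n$.
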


\begin{proof}
  For any fixed $i,j\ge 0$ and $v\in V$, by reversibility and by \ref{A:bal},
  \begin{align*}
    \sum_{u\in V}\pi(u)\cdot \pr(X_i=Y_j=v \mid X_0 = Y_0 = u)
    &= \sum_{u\in V}\pi(u)\cdot \pr_u(X_i=v)\cdot\pr_u(Y_j=v)\\
    &\le \frac{D}{n}\sum_{u\in V}\pr_v(X_i=u)\cdot\pr_u(Y_j=v)
     = \frac{D}{n}\vect{p}^{i+j}(v,v).
  \end{align*}
  thus by reversibility and \cref{fact:esc},
  \begin{align*}
    \E\left[ \left| \left\{
      (i,j): 0\le i,j\le \r\pand X_i=Y_j=v
    \right\} \right| \right]
    &\le \sum_{t=0}^{2 \r} \sum_{i=0}^t \sum_{u\in V} \pi(u)
      \cdot \pr_u(X_i=Y_{t-i}=v)\\
    &\le \frac{D}{n} \sum_{t=0}^{2 \r} \sum_{i=0}^t \vect{p}^t(v,v)
     \le \frac{D(\theta+2D)}{n}.\qedhere
  \end{align*}
\end{proof}

\begin{claim}\label{cl:escape}
  Let $u$ be a stationary vertex and let $X,Y$ be two independent lazy random walks starting at $u$.
  Then,
  \begin{equation*}
    \pr(X[0,\r ]\cap Y[1,\r ] = \es) \succeq 1.
  \end{equation*}
\end{claim}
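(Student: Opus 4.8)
\textbf{Proof proposal for \cref{cl:escape}.}

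The plan is to use the second moment method on the \emph{number of intersections}, but in the correct direction: rather than showing the walks intersect, I want to show that with positive probability they intersect only at the forced point $X_0 = Y_0 = u$ (which is excluded from $Y[1,\r]$). Let $N = \left|\left\{(i,j) : 0 \le i,j \le \r,\ X_i = Y_j,\ (i,j) \ne (0,0)\right\}\right|$, so that $\{X[0,\r]\cap Y[1,\r] = \es\}$ is \emph{implied} by $\{N = 0\}$ (if $X_i = Y_j$ with $j \ge 1$ then that pair $(i,j)$ is counted, and conversely any counted pair with $j \ge 1$ witnesses an intersection; pairs with $j = 0$, $i \ge 1$ would only strengthen $N$, which is fine since we only need an upper bound on $\E[N]$). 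So it suffices to show $\E[N] \preceq 1$ is not quite enough — I need $\pr(N = 0) \succeq 1$, and for that I will instead bound $\E[N]$ by a \emph{small} constant after localizing, or run the standard second-moment argument the right way. The cleanest route: first show $\E[N] \preceq 1$ by the computation already done, then upgrade using the fact that intersections that \emph{do} occur tend to come in clusters.

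Concretely, first I would show $\E[N] \preceq 1$ by summing \cref{cl:nonatt} over $v$: since $\sum_{v} \E\left[\left|\{(i,j) : 0 \le i,j \le \r,\ X_i = Y_j = v\}\right|\right] \preceq \sum_v n^{-1} = 1$, we get $\E[N+1] \preceq 1$ (the $+1$ absorbing the $(0,0)$ term). This alone does not give $\pr(N=0)\succeq 1$. To obtain that, I would use the following standard trick for loop-erased/intersection arguments: decompose according to the \emph{first} intersection. Let $\sigma = \min\{i \ge 0 : X_i \in Y[1,\r]\}$ on the event that such $i$ exists. Condition on $X[0,\sigma]$ and on the meeting vertex $w = X_\sigma$. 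Given a \emph{prior} visit of $Y$ to $w$ at some time $j \ge 1$, the expected number of \emph{further} coincidences between $X[\sigma,\r]$ and $Y[j,\r]$ is, by the same reversibility-plus-\cref{fact:esc} computation as in \cref{cl:nonatt} (now both walks started from $w$), at least some constant $c_0 > 0$ — because a positive fraction of the mass in the bubble sum comes from small times and the walks have time $\asymp \r$ remaining. Hence $\E[N \mid N \ge 1] \ge 1 + c_0$, which combined with $\E[N] = \E[N \mid N\ge 1]\pr(N \ge 1)$ and $\E[N] \preceq 1$ gives $\pr(N \ge 1) \le \E[N]/(1+c_0) $...

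Actually the sharp version is: $\pr(N \ge 1) = \E[N]/\E[N \mid N \ge 1]$, so any \emph{lower} bound $\E[N \mid N \ge 1] \ge \kappa$ together with the \emph{upper} bound $\E[N] \le M$ yields $\pr(N \ge 1) \le M/\kappa < 1$ provided $M < \kappa$ — which is not automatic since both are $\asymp 1$. The genuinely correct and robust argument is instead the \emph{second moment method applied to a well-chosen quantity}: set $Z = \sum_{j=1}^{\r} \ind\{Y_j \in X[0,\r]\}$ (the number of steps of $Y$ landing on the range of $X$), and note $\{Z = 0\} = \{X[0,\r]\cap Y[1,\r] = \es\}$. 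A first-moment bound gives $\E[Z] \preceq 1$, but to force $\pr(Z=0)\succeq 1$ I would show $\E[Z^2] \preceq (\E[Z])^2$ up to constants via a last-exit decomposition at the meeting point (so intersections cluster, making $\E[Z^2] \asymp \E[Z]$ when $\E[Z]\asymp 1$), and then the Paley–Zygmund inequality applied to $1 - \ind\{Z \ge 1\}$... no. The honest statement is: one shows $\E[Z] \le c$ for a constant $c$ that can be made $<1$ by choosing the implied constants carefully — \emph{but it cannot}, since $\E[Z] \asymp 1$ genuinely.

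So the main obstacle, and the real content of the claim, is precisely this: a bare first-moment bound $\E[Z]\preceq 1$ does \emph{not} give $\pr(Z = 0)\succeq 1$. The resolution I would pursue is the \emph{last-exit decomposition} / renewal argument: on the event $Z \ge 1$, let $w$ be the location of the \emph{last} time $X$ visits $Y[1,\r]\cap X[0,\r]$, say $X_i = w$, and let $j$ be the last time $Y_j = w$. Then $X[i,\r]$ and $Y[j,\r]$ are independent walks from $w$ that (given the conditioning defining $w,i,j$ as \emph{last} times) \emph{never meet again} — so conditionally on $Z \ge 1$ and on the decomposition data, the problem reduces to two independent shorter walks from $w$ avoiding each other, whose avoidance probability is itself $\succeq 1$ by a bootstrapping/Wald-type identity: writing $q = \pr(\text{two independent }\r\text{-walks from a stationary point avoid each other after time }1)$ and using that the number of meeting-pairs has expectation $\preceq 1$ while each ``excursion between consecutive meetings'' contributes an independent copy, one gets $1 - q \le (1-q)\cdot(\text{expected number of renewals}) \cdot (\text{something}) $, yielding $q$ bounded away from $0$. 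I expect the technical heart to be setting up this renewal decomposition cleanly — controlling the joint law of the two walks after the last meeting, using reversibility to re-root at $w$, and invoking \cref{fact:esc} to bound the expected number of returns — and this is exactly the step where \ref{A:esc} (the escaping assumption, i.e.\ transience-like behaviour) is essential and the whole argument would fail on, say, the torus $\mathbb{Z}_m^2$.
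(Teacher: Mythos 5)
You circle into the right strategy---a last-intersection decomposition combined with the first-moment bound of \cref{cl:nonatt} is exactly how the paper proves this claim---but the proposal never closes the argument, and the step at which it stalls is the one carrying all the content. The ``bootstrapping/Wald-type identity'' you write, $1-q\le(1-q)\cdot(\text{expected number of renewals})\cdot(\text{something})$, is vacuous as stated, and the assertion that after the last meeting the two continuation walks avoid each other ``with probability $\succeq 1$'' is circular: that is precisely the statement being proved. What is missing is the anchor that makes the bootstrap non-circular. In the paper one extends the horizon to $2r$ and calls $(i,j)\in[0,r]^2$ a \emph{last intersection} if $X_i=Y_j$ and there is no coincidence $X_{t_1}=Y_{t_2}$ with $i\le t_1\le 2r$ and $j<t_2\le 2r$. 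Since $X_0=Y_0$, an intersection always exists, and by \cref{fact:mix} and a union bound the probability of any coincidence with $\max\{t_1,t_2\}>r$ is $\preceq r^2/n\ll1$; hence a last intersection exists with probability at least $1/2$, so the expected number of last intersections is at least $1/2$. This unconditional lower bound is what your sketch lacks: you only work ``on the event $Z\ge1$'', which produces no such inequality, and you never use the fact that the pair $(0,0)$ forces at least one intersection.

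For the matching upper bound no conditioning on future avoidance is needed (so the law-distortion you worry about does not arise); one only uses event containment and the Markov property: $\{(i,j)\text{ is a last intersection at }v\}$ is contained in $\{X_i=Y_j=v\}$ intersected with the event that the two continuations from $v$, which given $X_i=Y_j=v$ are fresh independent walks of length at least $r$, avoid each other in the sense of the claim. Summing over $i,j\le r$ and over $v$, \cref{cl:nonatt} together with \ref{A:bal} gives $\sum_{i,j}\pr(X_i=Y_j=v)\preceq n\pi(v)\cdot n^{-1}$, so the sum over the last-meeting vertex reproduces, up to a constant, exactly the stationary average $\pr(X[0,r]\cap Y[1,r]=\es)$ --- this is how the ``re-rooting at $w$'' you allude to is handled, with no renewal structure, no second moment, and no Wald identity. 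The two bounds sandwich the expected number of last intersections, with the unknown avoidance probability appearing only on the upper-bound side, so $1/2\preceq\pr(X[0,r]\cap Y[1,r]=\es)$, which is the claim. In short: right idea, but the decisive inequality is neither stated correctly nor derivable from what you wrote, so as it stands the proposal has a genuine gap.
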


\begin{proof}
  First note that by \ref{A:bal}, \ref{A:mix}, \cref{fact:mix} and the union bound,
  \begin{equation*}
    \pr(\exists i,j:\ 0< i,j\le 2\r,\ \max\{i,j\}>\r,\ X_i=Y_j)
    \le 6 \r ^2 D/n \ll 1.
  \end{equation*}
  Call a pair $(i,j)$, for $0\le i,j\le \r $, a \defn{last intersection}, if $X_i=Y_j$ and for every $i\le t_1\le 2\r $ and $j< t_2\le 2\r $ it holds that $X_{t_1}\ne Y_{t_2}$.
  Thus, with probability greater than $1/2$, there exists a last intersection, thus the expected number of last intersections is greater than $1/2$.
  On the other hand,
  \begin{align*}
    \pr((i,j)\text{ is a last intersection})
    &\le \sum_{v\in V} \pr(X[0,\r ]\cap Y[1,\r ]=\es \mid X_0=Y_0=v)\cdot \pr(X_i=Y_j=v),
  \end{align*}
  thus by \cref{cl:nonatt} and by \ref{A:bal},
  \begin{align*}
    \frac{1}{2} &\le \E[\left|\left\{
      (i,j):(i,j)\text{ is a last intersection}
    \right\}\right|]\\
    &\le \sum_{v\in V} \pr(X[0,\r ]\cap Y[1,\r ]=\es \mid X_0=Y_0=v)
         \cdot \sum_{i,j=0}^{\r } \pr(X_i=Y_j=v)\\
    &\preceq n\cdot\sum_{v\in V} \pi(v) \cdot \pr(X[0,\r ]\cap Y[1,\r ]=\es\mid X_0=Y_0=v)
         \cdot n^{-1}\\
    &= \pr(X[0,\r ]\cap Y[1,\r ]=\es),
  \end{align*}
  and the claim follows.
\end{proof}

\subsection{Cut points and capacity}
Let $X = (X_0,\ldots,X_L)$ be a fixed walk of length $L$. We say that an integer $0\le t< L$ is a \defn{cut time} of $X$ if $X[0,t] \cap X[t+1,L] = \es$.
We observe that if $t$ is a cut time of $X$, then $X_t \in \LE(X[0,L])$.
Denote the set of cut times of $X$ by $\CT(X)$ and let $\CP(X)=X[\CT(X)]$ be the set of \defn{cut points} of $X$.
For every $1\le i\le\floor{L/\r}$ define
\begin{equation}\label{eq:Ai:Bi}
  B_i(X) = X[(i-1)\r,ir - \s), \quad A_i(X) = X[(i-1)\r+\s, ir - 2\s).
\end{equation}

The next claim essentially states that the capacity of the loop-erasure of a random walk whose length is a small constant times $\sqrt{n}$ is large enough.
\begin{claim}\label{cl:cap:sqrt}
  There exist a nonnegative function $f(n,\beta)$ satisfying $\lim_{\beta\to 0}\lim_{n\to\infty}f(n,\beta)=0$ and a constant $c>0$, both depending only on $D$, $\alpha$ and $\theta$, such that the following holds.
  Let $X$ be a lazy random walk of length $\beta\sqrt{n}$ starting from a stationary vertex and set $N=\floor{\beta\sqrt{n}/\r}$.
  Then with probability at least $1-f(n,\beta)$ we have
  \begin{equation*}
    \Cap_\r\left(
        \bigcup_{i=1}^N \left(\CP(B_i(X))\cap A_i(X)\right)
      \right) \ge \frac{c\beta \r}{\sqrt{n}}
    \quad\pand\quad
      \bigcup_{i=1}^N \left(\CP(B_i(X))\cap A_i(X)\right) \subseteq \LE(X),
  \end{equation*}
	so that
  \begin{equation*}
    \pr(\Cap_\r(\LE(X)) \geq c\beta \r/\sqrt{n}) \geq 1 - f(n,\beta).
  \end{equation*}
\end{claim}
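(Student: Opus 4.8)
The plan is to show that each ``block'' $B_i(X)$ contributes, with good probability, a positive-density set of cut points lying inside the sub-block $A_i(X)$, and that these contributions are nearly independent across $i$, so that a second-moment / concentration argument forces their union to have capacity of order $\beta\r/\sqrt n$. The inclusion in $\LE(X)$ is the easy part and should be disposed of first: if $X_t$ is a cut point of the sub-walk $B_i(X)=X[(i-1)\r, i\r-\s)$ and $t$ lies in the range defining $A_i(X)$ — i.e.\ at least $\s$ after the start of the block and at least $2\s$ before its end — then to know that $X_t\in\LE(X)$ we need that $X$ does not return to $X_t$ after time $t$ \emph{globally}, not just within the block. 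This is where \ref{A:mix} enters: the block boundaries are separated by $\s\gg\log(n)\tmix(G)$, so conditionally on the walk's position the future of the walk mixes, and the probability that $X$ revisits the point $X_t$ at any time $\ge i\r-\s$ is $\preceq \sqrt n\cdot (1/n) = n^{-1/2}$ per point (by \cref{fact:mix} summed over $\sqrt n$ steps, plus the bubble bound \cref{fact:esc} for nearby times); a union bound over the $\preceq\sqrt n$ candidate points in the block shows that the failure probability is $\preceq 1$ but we need it $o(1)$ — so in fact one argues that the \emph{expected number} of ``bad'' points (cut points of $B_i$ in $A_i$ that fail to survive the global loop-erasure) is small relative to the expected number of such points, and discards the bad ones by Markov. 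The same mixing separation makes the events across different $i$ essentially independent.

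The heart of the matter is the lower bound on $|\CP(B_i(X))\cap A_i(X)|$ and then on the capacity of the union. First I would establish that $\E|\CP(B_i(X))\cap A_i(X)|\succeq \r$: a time $t\in[(i-1)\r+\s,\ i\r-2\s)$ is a cut time of $B_i$ iff the past piece $X[(i-1)\r,t]$ and the future piece $X[t+1,\ i\r-\s)$ are disjoint; after splitting the walk at $t$ these two pieces are, conditionally on $X_t$, independent walks of length $\ge\s$ each started from the (roughly) stationary vertex $X_t$, so \cref{cl:escape} (applied to the first $\r$ — really $\s$ — steps of each, which is all we need) gives that this happens with probability $\succeq 1$, uniformly in $t$. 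Summing over the $\asymp\r$ admissible $t$ gives the first-moment bound. A matching second-moment bound $\E|\CP(B_i)\cap A_i|^2\preceq \r^2$ follows from the same escape estimates applied to two times $t<t'$ (using that conditionally the walk regenerates): this yields, via Paley–Zygmund, that $|\CP(B_i)\cap A_i|\succeq\r$ with probability $\succeq 1$.

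With the per-block estimate in hand, I would pass to the capacity. Note $\Cap_\r$ is subadditive, so the naive bound goes the wrong way; instead I use that the blocks live at asymptotically independent, roughly uniform locations in $G$ (again by \ref{A:mix}), so the union $S=\bigcup_{i=1}^N(\CP(B_i)\cap A_i)$ behaves like a union of $N\asymp\beta\sqrt n/\r$ independent ``uniform'' sets each of size $\asymp\r$. For such a set, $\Cap_\r(S)=\pr_\pi(\tau_S<\r)$, and by inclusion–exclusion / a second-moment argument $\pr_\pi(\tau_S<\r)\succeq \sum_i \pr_\pi(\tau_{S_i}<\r) - \sum_{i\ne j}\pr_\pi(\tau_{S_i}<\r,\tau_{S_j}<\r)$; the single-block hitting probability is $\asymp \r\pi(S_i)\asymp \r^2/n$ using \cref{fact:mix} and the fact (from \cref{thm:lb}'s philosophy, already available via \cref{cl:escape}) that the walk of length $\r$ hitting a spread-out $\r$-set does so with probability matching the union bound up to constants, and the cross terms are $\preceq (\r^2/n)^2$ by near-independence, so the sum is dominated by the linear term: $\Cap_\r(S)\succeq N\cdot \r^2/n \asymp \beta\r/\sqrt n$. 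Finally, since $S\subseteq\LE(X)$ with probability $1-f(n,\beta)$ by the first paragraph, monotonicity of capacity under inclusion gives $\Cap_\r(\LE(X))\ge \Cap_\r(S)\succeq c\beta\r/\sqrt n$, proving both displays.

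The main obstacle I expect is the capacity lower bound for the union: subadditivity of $\Cap_\r$ is useless here and one genuinely needs a second-moment estimate on the hitting time $\tau_S$ that exploits the near-independence and spread-out-ness of the blocks, controlling the correlation $\pr_\pi(\tau_{S_i}<\r,\ \tau_{S_j}<\r)$ — this requires the mixing assumption \ref{A:mix} to decouple the walk's visits to block $i$ from its visits to block $j$, and the escaping assumption \ref{A:esc} to ensure the walk does not waste its $\r$ steps in a small neighbourhood, so that hitting an $\r$-set really costs probability $\asymp \r^2/n$ rather than something smaller.
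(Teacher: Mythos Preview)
Your overall architecture --- split into blocks, show near-independence across blocks via the $\s$-buffer coupling, and recombine via inclusion--exclusion on the capacity --- matches the paper's. But two steps are mishandled.

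\textbf{The inclusion $\bigcup_i(\CP(B_i)\cap A_i)\subseteq\LE(X)$.} You recognise that your union bound only gives failure probability $\preceq 1$, and propose to ``discard the bad ones by Markov''. That salvages the final capacity conclusion but does \emph{not} prove the inclusion as stated in the claim: the claim asserts that with probability $1-f(n,\beta)$ the \emph{entire} set lies in $\LE(X)$, not a large subset of it. The paper's fix is a single line you are missing: by \cref{fact:mix} and a union bound,
\[
  \pr\bigl(\exists\,i,j:\ |i-j|>\s,\ X_i=X_j\bigr)\le (\beta\sqrt n)^2\cdot 2D/n\preceq\beta^2,
\]
and on the complementary event the inclusion holds \emph{deterministically}: if $t$ lies in the $A_i$-range and is a cut time of $B_i$, then any revisit of $X_t$ from outside $B_i$ would be a loop of length $>\s$, so $t$ is a cut time of the whole walk and hence $X_t\in\LE(X)$. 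No per-point accounting is needed.

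\textbf{The per-block capacity lower bound.} You prove $|\CP(B_i)\cap A_i|\succeq\r$ with probability $\succeq 1$ via first/second moments on the \emph{size}, and then assert that the single-block hitting probability $\pr_\pi(\tau_{S_i}<\r)$ is $\asymp \r^2/n$ because the set is ``spread out''. That last assertion is the entire content of \cref{cl:cap:r} and does not follow from the size bound: a set of $\r$ points can have capacity $\ll \r^2/n$ if it is clustered, and nothing in your size argument rules this out. The paper bypasses size altogether and applies Paley--Zygmund directly to $J=\sum_{i,j}\ind\{X_i=Y_j,\ i\in\CT(X)\}$ with $Y$ an independent stationary walk; the first moment uses \cref{cl:escape} for $\pr(i\in\CT(X))\succeq 1$, and the second moment is controlled by $J\le I$ and \cref{fact:esc}. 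This yields $\E[\Cap_\r(\CP(B_i)\cap A_i)]\succeq q^2$ outright, after which the paper applies Hoeffding (\cref{lem:hoeff}) to the bounded i.i.d.\ variables $J_i=\Cap_\r(\CP(B_i(Y))\cap A_i(Y))$ --- cleaner than your ``positive probability per block, then independence'' route, though yours would also work once the expectation bound is in place. Your treatment of the cross terms via $\Close_\r$ is essentially the paper's \cref{cl:close}.
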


The idea of the proof is as follows.
We first show that with high probability there are no loops of size larger than $\s$ in $X$.
Then, we divide the walk into small runs of size $\r-\s$, separated by buffers of size $\s$ (these are the sets $B_i(X)$ defined in \eqref{eq:Ai:Bi}).
We look at the cut times of any of these runs.
If indeed there are no loops of size larger than $\s$, then every cut time which is not in the $\s$-long prefix or suffix of the run survives the loop-erasure of $X$.
Then, in order to estimate the capacity of this loop-erasure,
we show that the sum of the capacities of the sets of cut points in these runs is typically of the right order.
Since with high probability these runs are ``far'' from one another, this sum estimates well the capacity of the loop-erasure of $X$.
This is the content of the next two claims.

To ease notation, set
\begin{equation*}
  \q = \r/\sqrt{n} = n^{-\alpha/3}.
\end{equation*}

\begin{claim}\label{cl:cap:r}
  Let $X$ be a lazy random walk of length $r-1$ starting from a stationary vertex.
  Then \[\E[\Cap_\r(\CP(X))] \succeq q^2.\]
\end{claim}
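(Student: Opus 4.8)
The plan is to lower-bound $\E[\Cap_\r(\CP(X))]$ by showing that a positive fraction of the middle portion of the walk consists of cut times, and that each such cut point contributes $\asymp \r/n$ to the expected capacity. Write $X = (X_0,\ldots,X_{\r-1})$ started from $\pi$. For an index $t$ in a central block, say $\r/4 \le t \le 3\r/4$, call $t$ \emph{good} if $X[0,t]\cap X[t+1,\r-1]=\es$, i.e. $t\in\CT(X)$. The first step is to estimate $\pr(t\text{ is good})$ from below by a constant. Splitting the walk at time $t$ into its past $X[0,t]$ and future $X[t,\r-1]$, both of which — conditioned on $X_t=v$ — are (by reversibility) random walks of length $\asymp\r$ from $v$, the expected number of collisions between past and future is controlled exactly by the bubble-type sum $\sum_{s=0}^{2\r}(s+1)\sup_v \vect p^s(v,v)$, which by \cref{fact:esc} is $\preceq 1$. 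Hence by a second-moment / Paley–Zygmund argument (the number of past–future intersections has bounded mean, so it is $0$ with probability bounded away from $0$ — this is essentially the computation already carried out in \cref{cl:nonatt} and \cref{cl:escape}), we get $\pr(t\in\CT(X))\succeq 1$ uniformly over good $t$ and over the starting vertex.

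Next I would pass from "each $t$ is good with constant probability" to a statement about the capacity. For a single good index $t$, since $X_t\in\CP(X)$, we have $\Cap_\r(\CP(X)) \ge \Cap_\r(\{X_t\})=\pr_\pi(\tau_{X_t}<\r)$. Averaging over the starting point being stationary and using reversibility, $\E[\pr_\pi(\tau_{X_t}<\r)\mathbf 1\{t\text{ good}\}]$ should be comparable to $\E[\pr_\pi(\tau_{X_t}<\r)]$ up to the constant-probability loss from goodness; and $\E_\pi[\pr_\pi(\tau_{X_t}<\r)] = \sum_{v}\pi(v)\pr_\pi(X_t=v)\cdot\text{(something)}$ — more precisely, since $X_0\sim\pi$ implies $X_t\sim\pi$, this is $\sum_v \pi(v)\Cap_\r(\{v\})$. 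By \eqref{fact:mix} each vertex has $\pi(v)\asymp 1/n$, and a lazy walk of length $\r$ from stationarity hits a fixed vertex $v$ with probability $\asymp \r/n$ (lower bound: restrict to the $\asymp\r$ near-stationary steps, each hitting $v$ with probability $\asymp 1/n$, and use an inclusion–exclusion / second-moment bound together with \cref{fact:esc} to show the expected number of visits $\asymp \r/n$ dominates and the variance is comparable, so $\pr(\tau_v<\r)\succeq \r/n$). This gives $\E[\Cap_\r(\{X_t\})]\succeq \r/n = \q^2$ for each fixed good-eligible $t$, and hence $\E[\Cap_\r(\CP(X))]\succeq \q^2$.

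A cleaner route that avoids worrying about the correlation between "$t$ is good" and "$X_t$ has large hitting probability": instead of a single $t$, use that $\Cap_\r$ is monotone and bound $\Cap_\r(\CP(X))$ below by $\Cap_\r(\{X_{t_0}\})$ on the event that $t_0:=\r/2$ is good, but note that goodness of $t_0$ is essentially independent of the geometry far from $X$ that governs $\pr_\pi(\tau_{X_{t_0}}<\r)$ only through the value $X_{t_0}$; condition on $X_{t_0}=v$, observe the past and future are conditionally independent given $X_{t_0}=v$, so $\pr(t_0\text{ good}\mid X_{t_0}=v)\succeq 1$ uniformly in $v$ (by the bubble estimate applied to two independent length-$\r$ walks from $v$), and therefore
\begin{equation*}
  \E[\Cap_\r(\CP(X))] \ge \E\big[\Cap_\r(\{X_{t_0}\})\,\mathbf 1\{t_0\text{ good}\}\big]
  = \sum_v \pr(X_{t_0}=v)\,\pr(t_0\text{ good}\mid X_{t_0}=v)\,\Cap_\r(\{v\})
  \succeq \sum_v \pi(v)\,\Cap_\r(\{v\}) \succeq \q^2 .
\end{equation*}

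The main obstacle is the last lower bound $\Cap_\r(\{v\})=\pr_\pi(\tau_v<\r)\succeq \r/n$: one must show a length-$\r$ walk from stationarity hits a fixed vertex with probability of the optimal order $\r/n$, not merely $\succeq \r/n\cdot(\text{something small})$. This is exactly where assumption \ref{A:esc} (via \cref{fact:esc}) is essential — it is the standard "first moment $\asymp \r/n$, second moment $\preceq (\r/n)\cdot\bub(G) \preceq \r/n$" computation for the number of visits to $v$ in $\r$ steps, after which Cauchy–Schwarz (Paley–Zygmund) yields $\pr(\tau_v<\r)\succeq (\E\#\text{visits})^2/\E(\#\text{visits})^2\succeq \r/n$. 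Everything else is bookkeeping with \cref{fact:mix,fact:esc} and reversibility; I expect the write-up to reuse the collision-counting lemmas \cref{cl:nonatt,cl:escape} almost verbatim for the goodness estimate.
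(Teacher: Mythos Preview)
Your approach is genuinely different from the paper's and, with one correction, does work. The paper runs Paley--Zygmund on the random variable $J=\#\{(i,j):X_i=Y_j,\ X_i\in\CP(X)\}$ where $Y$ is an independent stationary walk of length $\r$: it shows $\E[J]\succeq\q^2$ (using only the \emph{averaged} cut-time probability $\pr(i\in\CT(X))\succeq 1$ from \cref{cl:escape} together with $\pi(v)\asymp n^{-1}$) and $\E[J^2]\le\E[I^2]\preceq\q^2$ via \cref{fact:esc}, then concludes since $\pr(J>0)=\E[\Cap_\r(\CP(X))]$. You instead isolate a single candidate cut point $X_{t_0}$ and reduce to a single-vertex capacity bound $\Cap_\r(\{v\})\succeq\r/n$, which you correctly derive by Paley--Zygmund on the visit count. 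Your route is more modular (the single-vertex capacity lemma is a clean standalone statement) at the cost of throwing away all cut points but one.

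The gap is the uniform-goodness claim $\pr(t_0\text{ good}\mid X_{t_0}=v)\succeq 1$ for \emph{every} $v$. The bubble estimate gives only $\E_v[\#\text{collisions}]\preceq 1$, and bounded expected collisions does not by itself imply positive probability of zero collisions. \Cref{cl:escape} proves non-intersection only for a \emph{stationary} start, via a last-intersection argument that genuinely uses the averaging over the starting vertex (through \cref{cl:nonatt}); extracting a pointwise-in-$v$ version from the assumptions is not the one-liner you suggest.

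Fortunately you don't need it. Since your single-vertex bound $\Cap_\r(\{v\})\succeq\r/n$ \emph{is} uniform in $v$, reverse the order of operations in your displayed chain: pull that factor out first to get
\[
  \sum_{v}\pi(v)\,\pr(t_0\text{ good}\mid X_{t_0}=v)\,\Cap_\r(\{v\})
  \;\succeq\; \frac{\r}{n}\sum_v\pi(v)\,\pr(t_0\text{ good}\mid X_{t_0}=v)
  \;=\;\frac{\r}{n}\,\pr(t_0\text{ good})\;\succeq\;\q^2,
\]
invoking only the averaged statement from \cref{cl:escape} in the last step. With this reordering your argument is complete.
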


\begin{proof}
	Let $Y$ be a lazy random walk starting from a stationary vertex, independent of $X$.
	We then have
	\begin{equation*}
	\E[\Cap_\r(\CP(X))] = \pr(\CP(X) \cap Y \neq \es).
	\end{equation*}
	For $0\le i,j < \r$ let $I_{ij}$ be the indicator of the event $\{X_i=Y_j\}$ and let $J_{ij}$ be the indicator of the event $\{I_{ij}\pand X_i\in\CP(X)\}$.  Let $I=\sum_{i,j}I_{ij}$ and $J=\sum_{i,j}J_{ij}$. 
  By \cref{cl:escape}, 
  for every $0\le i<\r$ we have $\pr(i\in \CT(X)) \succeq 1$ (where the implicit constant does not depend on $i$). Hence, using \ref{A:bal},
  \begin{align*}
  	\E[J_{ij}]
    &\ge \sum_{v\in V} \pr(X_i = Y_j = v \pand i\in \CT(X))\\
    &=   \sum_{v\in V} \pr(X_i = Y_j = v) \pr(i\in \CT(X) \mid X_i = Y_j = v)\\
  	&\ge \frac{1}{Dn} \sum_{v\in V} \pr(X_i = v) \pr(i\in \CT(X) \mid X_i = v)
     \succeq n^{-1}, 
  \end{align*}
  and it follows that $\E{J} \succeq \q^2$. 
  We turn to bound the second moment of $J$.  Observing that $0\le J\le I$, it suffices to bound the second moment of $I$ instead.
  Suppose that $i',j'$ are such that $|i'-i|=k_i$ and $|j'-j|=k_j$.  Then 
  \begin{align*}
  	\E[I_{ij}I_{i'j'}]
    &= \sum_{u,v\in V} \pr(X_i=Y_j=u \pand X_{i'} = Y_{j'} = v) \\
    &\le \sum_{u\in V} \pr(X_i=Y_j=u)
         \sum_{v\in V} \pr(X_{i'} = Y_{j'} = v \mid X_i = Y_j = u) \\
    &\le \sum_{u\in V} \pr(X_i = Y_j = u)
         \sum_{v\in V} D\vect{p}^{k_i}(u,v)\vect{p}^{k_j}(v,u)
     \leq D\sum_{u\in V} \pi^2(u) \vect{p}^{k_i+k_j}(u,u). 
  \end{align*}
  We sum over all $i,j,i',j'$ to obtain
  \begin{align*}
  	\E[J^2] \le \E[I^2] \le 4 D \r^2 \sum_{u\in V} \pi^2(u) \sum_{t=0}^{2r} 
  	(t+1)\vect{p}^{t}(u,u). 
  \end{align*}
  From \cref{fact:esc} we obtain that the second sum is bounded by 
  $\theta+2D$. We then have that 
  \begin{equation*}
  	\E[J^2] \le 4Dr^2(\theta+2D) \sum_{u\in V} \pi^2(u)
    \preceq q^2.
  \end{equation*}
  Hence, by Paley--Zigmund inequality,
  \begin{equation*}
  	\pr(J>0) \ge \frac{\E^2[J]}{\E[J^2]}
    \succeq q^2.
  \end{equation*}
  The result follows since $\pr(J>0) = \E[\Cap_\r(\CP(X))]$.
\end{proof}

Define the \defn{$\r$-closeness} of two vertex sets $U_1,U_2$ by \[\Close_\r(U_1,U_2) = \pr_\pi(\tau_{U_1} < \r \pand \tau_{U_2} < \r).\]

\begin{claim}\label{cl:close}
  Let $Y$ and $Z$ be two independent lazy random walks of length $r-1$ starting from two independent stationary vertices.  Then,
  \begin{equation*}
    \E[\Close_\r(Y,Z)]\preceq \q^4.
  \end{equation*}
\end{claim}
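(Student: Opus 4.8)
The quantity $\E[\Close_\r(Y,Z)]$ is, by definition of $r$-closeness and by conditioning on an independent auxiliary walk, the probability that a third walk $W$ (from a stationary start) meets $Y$ within $\r$ steps \emph{and} meets $Z$ within $\r$ steps; more precisely, $\E[\Close_\r(Y,Z)] = \pr(W[0,\r)\cap Y \ne \es \pand W'[0,\r)\cap Z\ne\es)$ where $W,W'$ are the walk reindexed from the two stationary endpoints, but since $Y$ and $Z$ are independent and each is hit by an independent stationary walk, the cleanest route is: introduce two further independent stationary walks $W^Y, W^Z$ of length $\r-1$ and write
\begin{equation*}
  \E[\Close_\r(Y,Z)] = \pr\left(W^Y \cap Y \ne \es \pand W^Z \cap Z \ne \es\right).
\end{equation*}
Because $(Y,W^Y)$ is independent of $(Z,W^Z)$, this factorizes as $\pr(W^Y\cap Y\ne\es)^2$, and each factor is $\E[\Cap_\r(Y)] = \pr_\pi(\tau_Y < \r)$ for a stationary walk $Y$ of length $\r-1$. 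So the claim reduces to showing
\begin{equation*}
  \E[\Cap_\r(Y)] = \pr(W^Y\cap Y \ne \es) \preceq \q^2,
\end{equation*}
which is the matching upper bound to \cref{cl:cap:r} (there we only needed $\CP(X)$, here we use all of $Y$, but the upper bound is the same order).

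\textbf{Key steps.} First I would set up the reduction above carefully, making sure the independence of the two pairs is used correctly so that the product structure is genuine and not merely an inequality. Second, for the single-walk bound $\pr(W^Y\cap Y\ne\es)\preceq \q^2$, I would expand by the first-moment/second-moment method exactly as in the proof of \cref{cl:cap:r}: let $I = \sum_{0\le i,j<\r} \ind\{Y_i = W^Y_j\}$, so $\pr(I>0)\le \E[I]$. By reversibility and \ref{A:bal},
\begin{equation*}
  \E[I] = \sum_{i,j=0}^{\r-1}\sum_{v\in V}\pi(v)\,\pr_\pi(Y_i = v)\,\pr_\pi(W^Y_j = v) \le D\sum_{i,j}\sum_v \pi^2(v) = D\r^2\sum_v\pi^2(v) \preceq \r^2 n^{-1} = \q^2,
\end{equation*}
using that $\sum_v\pi^2(v)\asymp n^{-1}$ by \ref{A:bal}. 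Wait --- this bound actually uses the stationarity of $Y$ at time $i$, which is valid since $Y_0\sim\pi$ implies $Y_i\sim\pi$ for the lazy walk. Hence each factor is $\preceq \q^2$ and the product is $\preceq \q^4$, proving the claim.

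\textbf{Main obstacle.} The calculation itself is routine; the only thing requiring care is the \emph{reduction}, i.e. correctly identifying $\E[\Close_\r(Y,Z)]$ with a product of two intersection probabilities. One must be careful that $\Close_\r(Y,Z) = \pr_\pi(\tau_Y<\r \pand \tau_Z<\r)$ involves a \emph{single} walk hitting both $Y$ and $Z$, not two independent walks. So the true identity is $\E[\Close_\r(Y,Z)] = \pr(W[0,\r)\cap Y\ne\es \pand W[0,\r)\cap Z\ne\es)$ with one walk $W$, and then one bounds this above by $\E[I_Y \cdot I_Z]$ where $I_Y=\sum_{i,j}\ind\{W_i\in Y,\ \text{etc}\}$ --- but this no longer factorizes cleanly. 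The honest approach: bound $\pr(W\cap Y\ne\es \pand W\cap Z\ne\es) \le \E\big[\big(\sum_{i<\r}\ind\{W_i\in Y\}\big)\big(\sum_{j<\r}\ind\{W_j\in Z\}\big)\big]$, expand into $\sum_{i,j<\r}\pr(W_i\in Y \pand W_j\in Z)$, and since $Y\perp Z$ this is $\sum_{i,j}\sum_{u,v}\pr(W_i=u, W_j=v)\pr_\pi(\tau_u^{?})\cdots$ --- cleanest is to note $\pr(W_i\in Y)\le \sum_u \pr(W_i=u)\cdot \r\pi(u) \preceq \r n^{-1} = \q^2$ for each $i$ by \eqref{eq:cap:ub} and \ref{A:bal}, but we need the \emph{joint} event. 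Conditioning on $W$: $\pr(W_i\in Y, W_j\in Z\mid W) = \pr(W_i\in Y\mid W)\pr(W_j\in Z\mid W)$ by independence of $Y,Z$, each factor $\le \r\pi(W_i)$ resp.\ $\le \r\pi(W_j) \preceq n^{-1}$, so the summand is $\preceq n^{-2}$, and summing over $\r^2$ pairs gives $\preceq \r^2 n^{-2} = \q^4$. This is the argument I would write, and the subtlety to flag is precisely this use of conditional independence of $Y$ and $Z$ given $W$.
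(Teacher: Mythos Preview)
Your final approach (in the ``Main obstacle'' paragraph) is correct and arguably cleaner than the paper's, but you have an arithmetic slip at the very end. With $X$ the auxiliary stationary walk, conditioning on $X$ and using the independence of $Y$ and $Z$ gives
\[
\pr(X_i\in Y,\ X_j\in Z\mid X)=\pr(X_i\in Y\mid X)\,\pr(X_j\in Z\mid X)\le r\pi(X_i)\cdot r\pi(X_j)\preceq \frac{r^2}{n^2},
\]
not $n^{-2}$. Summing over the $r^2$ pairs $(i,j)$ then yields $r^4 n^{-2}=q^4$, as desired; your assertion ``$r^2 n^{-2}=q^4$'' is false (recall $q=r/\sqrt{n}$). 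Once this is fixed, the argument is complete.

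For comparison, the paper does not bound by the full double sum. Instead it orders the two hitting times by symmetry, so that $X$ first hits $Y$ at some vertex $v$ and only afterwards hits $Z$; it then partitions over $v$, bounds $\pr(\tau_v=\tau_Y<r)\le r^2\pi(v)^2$ by a union bound, and uses the Markov property at $v$ to bound the remaining probability by $\pr_v(\tau_Z<r)$, summing to $\preceq q^2\Cap_r(Z)\preceq q^4$. Your route avoids the symmetry/ordering step and the Markov property entirely by exploiting the conditional independence of $Y$ and $Z$ given the auxiliary walk; the paper's route, on the other hand, would still work if $Y$ and $Z$ were dependent, since it only needs the marginal bound $\Cap_r(Z)\preceq q^2$. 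In the present setting your argument is the more direct of the two.
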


\begin{proof}
    Let $X$ be a lazy random walk, independent of $Y$ and $Z$, starting from an independent stationary vertex. Throughout the proof, the notation $\tau$ refers to the random walk $X$.  We would like to bound $\pr(\tau_{Y} < \r \pand \tau_{Z} < \r)$.
	Indeed, by symmetry,
	\begin{equation*}
	  \pr(\tau_{Y} < \r \pand \tau_{Z} < \r)
    \le 2\pr(\exists t_1<t_2 <\r : X_{t_1} \in Y,\ X_{t_2} \in Z). 
	\end{equation*}
  Conditioning on $Z$ and partitioning according to the hitting vertex in $Y$ we have
	\begin{equation*} \label{eq:closeness:fixed:set}
	  \pr(\exists t_1<t_2 < \r : X_{t_1} \in Y,\ X_{t_2} \in Z \mid Z)
    \le \sum_{v\in V}
          \pr(\tau_v = \tau_{Y} < \r) \pr_v(\tau_{Z} < \r \mid Z).
	\end{equation*}
	By the union bound, $\pr(\tau_v = \tau_{Y} < \r) \le \r^2\pi^2(v)$,
	hence, using \ref{A:bal},
	\begin{align*}
    \sum_{v\in V}\pr(\tau_v = \tau_{Y} < \r)\pr_v(\tau_{Z} < \r \mid Z)
    &\le \sum_{v\in V} \r^2\pi^2(v)\pr_v(\tau_{Z}<\r \mid Z)\\
    &\le \r^2\cdot\frac{D}{n}\sum_{v\in V} \pi(v)\pr_v(\tau_{Z} < \r \mid Z)
     \le Dq^2\cdot \Cap_\r(Z),
	\end{align*}
  and the result follows since by~\eqref{eq:cap:ub} we have $\Cap_\r(Z)\le r\pi(Z)\preceq q^2$.
\end{proof}

To prove \cref{cl:cap:sqrt}, we will also need the following classical lemma by Hoeffding.
\begin{lemma}[Hoeffding,~\cite{Hoe63}*{Theorem 1}]\label{lem:hoeff}
 	Let $M>0$, and suppose $\{J_i\}_{i=1}^k$ is a family of independent random variables with $0 \leq J_i \le M$. Then, for every $0<x<M-\E[\sum_{i=1}^k J_i]/k$,
  \begin{equation*}
  	\pr\left(\left|\sum_{i=1}^k  \left(J_i  - \E[J_i]\right) \right| > kx\right)
      \le 2\exp\left(-2k\left(\frac{x}{M}\right)^2\right).
  \end{equation*}	
\end{lemma}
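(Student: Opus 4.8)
The plan is to run the classical Cram\'er--Chernoff exponential-moment argument, with Hoeffding's lemma as the one-dimensional input. First I would reduce to the normalized case $M=1$: replacing each $J_i$ by $J_i/M$ and $x$ by $x/M$ changes neither side of the claimed inequality, so from now on assume $0\le J_i\le 1$. Write $S=\sum_{i=1}^k(J_i-\E J_i)$. Since $-S=\sum_{i=1}^k\bigl((1-J_i)-\E(1-J_i)\bigr)$ and each $1-J_i$ again lies in $[0,1]$, a union bound over the two tails reduces the statement to the one-sided estimate $\pr(S>kx)\le\exp(-2kx^2)$, the factor $2$ in the statement accounting for the lower tail. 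The hypothesis $x<M-\E[\sum_i J_i]/k$ is not actually needed for the argument; it only guarantees that the tail event is non-trivial.

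For the one-sided bound I would apply the exponential Markov inequality together with independence: for every $\lambda>0$,
\begin{equation*}
  \pr(S>kx)\le e^{-\lambda kx}\,\E[e^{\lambda S}]
  = e^{-\lambda kx}\prod_{i=1}^k\E[e^{\lambda(J_i-\E J_i)}].
\end{equation*}
The crux is the sub-Gaussian moment bound for a single bounded centred variable: if $\E Y=0$ and $Y\in[a,b]$ almost surely, then $\E[e^{\lambda Y}]\le e^{\lambda^2(b-a)^2/8}$. I would prove this from convexity of $t\mapsto e^{\lambda t}$, which gives $e^{\lambda Y}\le\frac{b-Y}{b-a}e^{\lambda a}+\frac{Y-a}{b-a}e^{\lambda b}$ pointwise; taking expectations and using $\E Y=0$ yields $\E[e^{\lambda Y}]\le e^{L(h)}$, where $h=\lambda(b-a)$, $p=-a/(b-a)\in[0,1]$, and $L(h)=-ph+\log(1-p+pe^{h})$. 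One checks $L(0)=L'(0)=0$ and $L''(h)=\frac{p(1-p)e^{h}}{(1-p+pe^{h})^{2}}\le\tfrac14$ for all $h$ (using $t(1-t)\le\tfrac14$ with $t=pe^{h}/(1-p+pe^{h})$), so Taylor's theorem gives $L(h)\le h^2/8$, which is exactly the claim.

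Applying this with $Y=J_i-\E J_i$, which takes values in the interval $[-\E J_i,\,1-\E J_i]$ of length $1$, gives $\E[e^{\lambda(J_i-\E J_i)}]\le e^{\lambda^2/8}$, hence $\pr(S>kx)\le\exp\bigl(k(-\lambda x+\lambda^2/8)\bigr)$ for every $\lambda>0$. Minimizing the quadratic $-\lambda x+\lambda^2/8$ at $\lambda=4x$ gives $\pr(S>kx)\le e^{-2kx^2}$; adding the symmetric lower-tail bound and undoing the normalization $x\mapsto x/M$ yields the asserted $2\exp(-2k(x/M)^2)$. The only mildly delicate computation is the bound $L''\le\tfrac14$ on the single-variable log-moment-generating function, which is precisely where the constant $2$ in the final exponent is produced; all other steps are routine.
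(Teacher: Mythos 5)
Your proof is correct and complete: the normalization to $M=1$, the reduction of the two-sided bound to the one-sided one via $1-J_i$, the Chernoff/exponential-Markov step, the convexity proof of the single-variable bound $\E[e^{\lambda Y}]\le e^{\lambda^2(b-a)^2/8}$ with $L''\le\tfrac14$, and the optimization $\lambda=4x$ are all sound (and you are right that the hypothesis $0<x<M-\E[\sum_i J_i]/k$ is not needed for the inequality itself). The paper does not prove this lemma but simply cites Hoeffding's Theorem 1, and your argument is exactly the classical proof of that result, so there is nothing further to compare.
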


\begin{proof}[Proof of \cref{cl:cap:sqrt}]
  We begin by bounding the probability that there is a loop of size larger than $\s$ in $X$.
  This follows from \cref{fact:mix} and the union bound:
  \begin{equation}\label{eq:large:loop:estimate}
    \pr\left(\exists i,j: |i-j|>\s,\ X_i = X_j\right)
      \le (\beta\sqrt{n})^2\cdot 2D/n \preceq \beta^2.
  \end{equation}
  Therefore, with probability tending to $1$ as $\beta$ tends to $0$ we obtain
  \begin{equation*}
    \bigcup_{i=1}^N \left(\CP(B_i(X)) \cap A_i(X)\right) \subseteq \LE(X).
  \end{equation*}
  The sequence of runs $\langle B_i(X)\rangle$ are nearly independent random walks with initial distribution $\pi$. Indeed, because they are separated by buffers of length $\s$, if we condition on $B_i(X)$ the distribution of $X_{(i+1)\r}$, the starting position of $B_{i+1}(X)$, is $\stadist$-far from $\pi$ in total variation distance.
  Define the process $Y$ as follows.
  For $0\le i<N$ we sample $Y_{ir}\sim\pi$ independently, and let $\langle Y_{ir+j}\rangle_{j=0}^{r-1}$ be a lazy random walk on $G$.
  That is, $Y$ is a concatenation of $N$ independent lazy random walk starting from the stationary distribution.
  By the discussion above we may couple $X$ and $Y$ such that
  \begin{equation}\label{eq:coupling:success}
    \pr(\forall i: B_i(Y) = B_i(X))
      \ge 1 - N\cdot n^{-2}.
  \end{equation}
  That is, if we neglect an error which, for $n$ large enough, is bounded by $N\cdot n^{-2}$, we can get our estimations on $\langle B_i(X)\rangle$ by analysing $Y$, which has the property that $\langle B_i(Y) \rangle$ are i.i.d.\ lazy random walks on $G$ starting from the stationary distribution. 
  We now show that the sum of $\Cap_\r(\CP(B_i(X))\cap A_i(X))$ over $i$ is large, using the coupling between $X$ and $Y$.
  We use Hoeffding's lemma (\cref{lem:hoeff}) with the random variables $J_i = \Cap_\r(\CP(B_i(Y))\cap A_i(Y))$.
  By \cref{cl:cap:r} we have that $\E[J_i] \succeq \q^2$.
  By~\eqref{eq:cap:ub} we have $J_i \le D\q^2$.
  We obtain
  \begin{equation*}
    \pr\left(
        \left|\sum_{i=1}^N \left(J_i - \E[J_i]\right) \right| > N\q^{9/4}
      \right)
    \le 2\exp\left(-2N \left(\frac{\q^{9/4}}{2Dr^2n^{-1}}\right)^2 \right)
    \le 2\exp\left(-c_1\beta \q^{-1/2}\right),
  \end{equation*}
  for some $c_1=c_1(D,\alpha,\theta)$.
  Therefore, with probability larger than $1- 2\exp\left(-c_1\beta \q^{-1/2}\right)$ we have
  \begin{equation}\label{eq:sum:cap:estimate}
    \sum_{i=1}^N \Cap_\r\left(\CP({B_i(Y)})\cap(A_i(Y)\right)
      \succeq \beta\q.
  \end{equation}
  We would like to show that this sum estimates well enough (from below) the capacity of $\LE(X)$. By a simple inclusion-exclusion argument, we have 
  \begin{equation*}
    \Cap_\r\left(\bigcup_{i=1}^N\CP(B_i(X))\cap A_i(X))\right)
      \ge \sum_{i=1}^N \Cap_\r(\CP(B_i(X))\cap A_i(X))
        - \sum_{i\ne j} \Close_\r(B_i(X),B_j(X)).
  \end{equation*}
  Therefore, on the event that there are no loops of size larger than $\s$,
  \begin{equation*}
    \Cap_\r(\LE(X))
      \ge \sum_{i=1}^N \Cap_\r(\CP(B_i(X))\cap A_i(X))
        - \sum_{i\ne j} \Close_\r(B_i(X),B_j(X)).
  \end{equation*}
  We use the coupling between $X$ and $Y$ once more and bound from above $\Close_\r(B_i(Y),B_j(Y))$ for $i\neq j$.
  It follows from \cref{cl:close} that
  $\E[\Close_\r(B_i(Y),B_j(Y))] \preceq \q^4$.
  Hence
  \begin{equation*}
    \E\left[\sum_{i\ne j} \Close_\r(B_i(Y),B_j(Y))\right]
      \preceq N^2\q^4 \asymp \beta^2\q^2.
  \end{equation*}
  Then using Markov's inequality,
  \begin{equation}\label{eq:close:estimate}
    \pr\left(\sum_{i\ne j} \Close_\r(B_i(Y),B_j(Y)) > \q^{3/2} \right)
      \preceq \beta^2\q^{1/2}.
  \end{equation}
  We sum the errors by restricting to walks with no loops of size larger than $\s$ using~\eqref{eq:large:loop:estimate}, in which the coupling succeeds using~\eqref{eq:coupling:success}, with large capacity as in~\eqref{eq:sum:cap:estimate} and small closeness as in~\eqref{eq:close:estimate} to obtain that with probability $1-f(n,\beta)$, for $f(n,\beta)$ which tends to $0$ as $n\to\infty$ and $\beta\to 0$,
  we have that
  \begin{equation*}
      \Cap_\r(\LE(X))
        \ge \Cap_\r\left(\bigcup_{i=1}^N \CP(B_i(X)) \cap A_i(X)\right)
        \succeq \beta\q.\qedhere
  \end{equation*}
\end{proof}

\subsection{The sunny network}\label{sec:sun}
Let $\Cap_\r(\cdot)$ denote the $\r$-capacity as defined in~\eqref{eq:cap}, where $\r=\r(\alpha)$ is as defined in~\eqref{eq:rs}.
For $\beta>0$, let $\sun{G}_\beta:=(\sun{G},\weight_\beta)$ be the network obtained from the graph $G$ in the following way.
First, add a vertex $\rho$, called the \defn{sun}, to the vertex set of $G$. Then, for every $u\in V$ we add the edge $\{u,\rho\}$ to $\sun{G}$, assigning it edge weight so that the probability to move to $\rho$ from $u$ in the lazy random walk would be $\beta^2n^{-1/2}$.
This is achieved by setting
\begin{equation*}
  \weight_\beta(\{u,\rho\}) = \frac{2\beta^2 \d_G(u)}{\sqrt{n}-2\beta^2}
\end{equation*}
and $\weight_\beta(\{u,v\})=1$ for every $\{u,v\}\in E$.
We call $\sun{G}_\beta$ the \defn{sunny network} associated with $G$ (with parameter $\beta$).
Note that if $X$ is the lazy random walk on $\sun{G}_\beta$, conditioning on $\tau_\rho=m$ for some $m\ge 1$, the walk $X[0,m)$ is distributed as the lazy random walk of length $m-1$ on $G$.

A classical fact~\cite{LP}*{Lemma 10.3} says that if $G$ is a connected subnetwork of $\sun{G}_\beta$, then we can couple the weighted $\UST$ measure on $G$ and the weighted $\UST$ measure on $\sun{G}_\beta$ such that the weighted $\UST$ of $\sun{G}_\beta$, restricted to the edge set of $G$, is a subset the weighted $\UST$ of $G$.
Let $\sun\phi$ be the unique simple path between two stationary points $u,v$ in $\UST(\sun{G}_\beta)$.
We show that as $\beta$ tends to $0$ the probability that $\sun\phi$ contains the sun diminishes, hence by this coupling $\sun\phi$ is the unique simple path between $u$ and $v$ in $\UST(G)$.
The general approach is to run a random walk from $u$ until it hits the sun, analyse the length and capacity of its loop-erasure, then run a random walk from $v$ until it hits that loop-erasure, and analyse its loop-erasure as well.

\begin{claim}\label{cl:pathtosun}
  There exist a nonnegative function $f(n,\beta)$ satisfying $\lim_{\beta\to 0}\lim_{n\to\infty}f(n,\beta)=0$ and a constant $c>0$, both depending only on $D$, $\alpha$ and $\theta$, such that the following holds.
  Let $X$ be a lazy random walk on $\sun{G}_\beta$ starting from a stationary vertex of $G$ and terminated $1$ step before first hitting $\rho$.
  Then with probability at least $1-f(n,\beta)$ we have
  \begin{equation*}
    \Cap_\r(\LE(X)) \geq c\beta \r / \sqrt{n}
      \pand |\LE(X)| \le \sqrt{n}/\beta^3. 
  \end{equation*}
\end{claim}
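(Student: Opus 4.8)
The plan is to control separately the length of $\LE(X)$ and its capacity, using the fact that $X$ run until hitting $\rho$ is, after conditioning on $\tau_\rho=m$, just a lazy random walk of length $m-1$ on $G$, and that $\tau_\rho$ is geometric with mean $\asymp\sqrt{n}/\beta^2$. First I would handle the length. Conditionally on $\tau_\rho=m$, the walk $X[0,m)$ is a lazy random walk on $G$, so $|\LE(X)|\le m$ trivially; but this only gives $|\LE(X)|\lesssim \sqrt{n}/\beta^2$ on the event $\{\tau_\rho\le K\sqrt{n}/\beta^2\}$, which is not quite the claimed $\sqrt{n}/\beta^3$. The right move is to note that we do not need $|\LE(X)|$ small deterministically but only with probability $1-f(n,\beta)$; since $\tau_\rho$ has mean $\asymp\sqrt{n}/\beta^2$, Markov's inequality gives $\pr(\tau_\rho > \sqrt{n}/\beta^3)\le \pr(\tau_\rho > \beta^{-1}\cdot \E[\tau_\rho]/C)\preceq \beta$, which tends to $0$ with $\beta$; and on the complementary event $|\LE(X)|\le \tau_\rho\le\sqrt{n}/\beta^3$. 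So the length bound follows from a one-line Markov estimate on the geometric hitting time of the sun.

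Next, the capacity. The idea is to compare $X$ (run until hitting $\rho$) with a genuine lazy random walk $W$ of length $\beta'\sqrt{n}$ on $G$ for a suitable $\beta'$ (say $\beta'=1$, or some constant), and invoke \cref{cl:cap:sqrt}. The point is that $\pr(\tau_\rho \ge \beta'\sqrt{n})$ is bounded away from $0$ once $\beta$ is small relative to $\beta'$ — indeed, with move-probability $\beta^2 n^{-1/2}$ to $\rho$ at each step, the chance of not hitting $\rho$ in the first $\beta'\sqrt{n}$ steps is $(1-\beta^2 n^{-1/2})^{\beta'\sqrt{n}}\ge e^{-2\beta^2\beta'}\ge 1/2$ for $\beta$ small. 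Conditioned on $\{\tau_\rho\ge\beta'\sqrt{n}\}$, the initial segment $X[0,\beta'\sqrt{n})$ is distributed exactly as $W[0,\beta'\sqrt{n})$, a lazy random walk of length $\beta'\sqrt{n}$ on $G$ from a stationary vertex. By \cref{cl:cap:sqrt} applied with that constant $\beta'$, with probability $1-f(n,\beta')$ the initial segment already contains a subset of the form $\bigcup_i(\CP(B_i)\cap A_i)$ that lies in $\LE(X[0,\beta'\sqrt{n}))$ and has $\r$-capacity $\succeq \beta'\r/\sqrt{n}$. The slightly delicate point here is that these cut points of the initial segment need not survive the loop-erasure of the full walk $X[0,\tau_\rho)$: a later excursion of $X$ could revisit them. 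But a cut point $X_t$ of $B_i$ that is not in the $\s$-prefix/suffix of its run survives $\LE(X[0,\tau_\rho))$ precisely when $X_t\notin X[t+1,\tau_\rho)$, and the total number of pairs $(i,j)$ with $|i-j|>\s$ and $X_i=X_j$ over the whole (now possibly long) walk is controlled by \cref{fact:mix} only up to length $\asymp\sqrt{n}/\beta^3$, giving an error $\preceq (\sqrt{n}/\beta^3)^2\cdot D/n = D/\beta^6$, which is \emph{not} small. So a cruder "no long loop" estimate over the full walk fails, and this is the main obstacle.

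I expect the way around this to be to not insist on the whole walk: restrict to the event $\{\beta'\sqrt{n}\le \tau_\rho\le K\beta'\sqrt{n}\}$ for a constant $K$, which has probability bounded below (again by the geometric law of $\tau_\rho$, uniformly in $n$, once $\beta$ is small), and apply the no-long-loop bound of \eqref{eq:large:loop:estimate} with the length $K\beta'\sqrt{n}$ — giving error $\preceq (K\beta')^2$, which tends to $0$ as $\beta'\to 0$. On that event, cut points of the runs $B_i$ inside the first $\beta'\sqrt{n}$ steps that avoid their $\s$-buffers do survive $\LE(X[0,\tau_\rho))$, because there are no loops of size $>\s$ in the entire walk $X[0,\tau_\rho)$, and hence $\Cap_\r(\LE(X))\ge \Cap_\r(\bigcup_i(\CP(B_i)\cap A_i))\succeq\beta'\r/\sqrt{n}\succeq c\beta\r/\sqrt{n}$ for a suitable absolute constant $c$ (here one uses $\beta\le\beta'$, so a lower bound in terms of $\beta'$ is in particular a lower bound in terms of $\beta$ with the stated constant, after possibly shrinking $c$). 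Then I would collect the three error terms — the failure of $\{\beta'\sqrt{n}\le\tau_\rho\le K\beta'\sqrt{n}\}$, the failure of \cref{cl:cap:sqrt} on the initial segment, and the presence of a long loop — into a single function $f(n,\beta)$, note each vanishes as $n\to\infty$ and then $\beta\to 0$ (choosing $\beta'$ appropriately, e.g.\ $\beta'=\sqrt{\beta}$ or some fixed small constant depending only on the target error), and combine with the length bound from the first paragraph by a union bound. The constants $c$ and the function $f$ depend only on $D,\alpha,\theta$ as required, since all the inputs (\cref{fact:mix}, \cref{fact:esc}, \cref{cl:cap:sqrt}) have that property.
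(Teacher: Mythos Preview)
Your length bound via Markov's inequality on the geometric variable $\tau_\rho$ is fine and matches the paper.

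The capacity argument, however, has a genuine gap. You restrict to the event
\[
E=\{\beta'\sqrt{n}\le\tau_\rho\le K\beta'\sqrt{n}\}
\]
and claim it ``has probability bounded below \ldots once $\beta$ is small''. This is false: $\tau_\rho$ is geometric with mean $\sqrt{n}/\beta^2$, so for any fixed $\beta',K$,
\[
\pr(\tau_\rho\le K\beta'\sqrt{n})
=1-(1-\beta^2 n^{-1/2})^{K\beta'\sqrt{n}}
\sim K\beta'\beta^2\longrightarrow 0
\quad\text{as }\beta\to 0.
\]
Thus $\pr(E^c)\to 1$, not $\to 0$, and ``failure of $E$'' cannot be absorbed into $f(n,\beta)$. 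No tuning of $\beta',K$ helps: if you push $K\beta'$ up to order $\beta^{-2}$ so that $E$ becomes likely, then the no-long-loop bound \eqref{eq:large:loop:estimate} gives an error of order $(K\beta')^2\asymp\beta^{-4}$, and \cref{cl:cap:sqrt} (which needs its parameter small) is no longer available on the long initial segment. This is precisely the tension you flagged as ``the main obstacle'', and your proposed fix does not resolve it.

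The paper handles this by looking at the \emph{last} $\beta\sqrt{n}$ steps of the walk (the suffix) rather than the first, and splitting into two cases according to the capacity of the loop-erasure of the prefix $X[0,\tau_\rho-\beta\sqrt{n})$. If that capacity is already $\ge 2c\beta r/\sqrt{n}$, one extracts an initial piece $U$ of the prefix loop-erasure with capacity $\asymp c\beta r/\sqrt{n}$ and shows the short suffix is unlikely to hit $U$ (so $U$ survives the full loop-erasure). If instead the prefix loop-erasure has small capacity, then the suffix is unlikely to revisit it, so the cut-point structure of the $\beta\sqrt{n}$-long suffix (via \cref{cl:cap:sqrt}) survives into $\LE(X)$. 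In both cases one only controls interactions between a segment of length $\beta\sqrt{n}$ and the rest, never all pairs over the full (order $\sqrt{n}/\beta^2$) walk.
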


\begin{proof}
  We first note that since $\tau_\rho$ (the first hitting time to $\rho$) is a geometric random variable with mean $\sqrt{n}/\beta^2$, we have that $\pr(\tau_\rho \leq \beta \sqrt{n}) \le \beta^3$.
  We restrict ourselves to the event $\{\tau_\rho > \beta \sqrt{n}\}$.
  Denote $m=\tau_\rho - \beta\sqrt{n} - 1$ and divide the walk to a prefix $X[0,m)$ and a $\beta\sqrt{n}$-long suffix $X[m,\tau_\rho)$.
  We distinguish between two cases according to whether or not the capacity of the loop-erasure of the prefix is larger than $2c\beta\r / \sqrt{n}$, where $c=c(D,\alpha,\theta)$ is the constant obtained from \cref{cl:cap:sqrt}.
  Write
  \begin{equation*}\label{eq:prefix:long:cap}
    \lcevent = \left\{\Cap_\r(\LE(X[0,m))) \geq 2c\beta\r / \sqrt{n}
                      \pand \tau_{\rho} \geq \beta\sqrt{n}\right).
  \end{equation*} First, we suppose that $\lcevent$ holds.
  In this case, we will find a part of the loop-erasure of the prefix that the suffix avoids.
  This part will thus survive the loop-erasure and contribute $c\beta\r /\sqrt{n}$ to its capacity.
  Let $t$ be the smallest integer such that $\LE(X[0,m))[0,t]$ has capacity larger than $c\beta\r/\sqrt{n}$.
  For brevity, denote $U = \LE(X[0,m))[0,t]$.
  By the subadditivity of the capacity we have that
  \begin{equation*}
    \Cap_\r(U)
      \le \Cap_\r(\LE(X[0,m))[0,t)) + \Cap_\r(\LE(X[0,m))_t)
      \le c\beta\r / \sqrt{n} + Dr / n \leq 2c\beta\r / \sqrt{n}.
  \end{equation*}
  Recall that $\langle \lambda(X[0,m))_i \rangle$ are the times contributing to the loop-erasure of $X[0,m)$.
  The subadditivity of the capacity also implies that the length of $\LE(X[0,m))$ has to be larger than $t-1+c\beta\sqrt{n}/D$.
  Hence on $\lcevent$, for large enough $n$,
  \begin{equation*}
    \lambda(X[0,m))_t \leq m - \frac{c\beta\sqrt{n}}{2D}.
  \end{equation*}
  We will now show that with high probability $U$ does not intersect the suffix $X[m,\tau_\rho)$ and hence survives the loop-erasure of $X[0,\tau_\rho)$. We apply \ref{A:mix} and use the fact that conditioned on $\tau_\rho$, the walk $X[0,\tau_{\rho})$ is distributed as the lazy random walk on $G$.
  We then have by \cref{fact:mix}
  \begin{align*}
    &\pr(\lcevent \pand U \cap X[m,m+s) \neq \es) \\
    &\leq \pr(X[m,m+\s) \cap X[0,m-c\beta\sqrt{n}/(2D)] \neq \es) \\
    &\leq \E[\ind(X[m,m+\s) \cap X[0,m-c\beta\sqrt{n}/(2D)] \neq \es) \mid m]
      \leq \frac{2D\s\E[\tau_\rho]}{n}
      \leq \frac{2D\s}{\beta^2 \sqrt{n}}.
  \end{align*}
  Therefore there is a low probability that $U$ intersects $X[m,m+s)$.
  We will show it has a low probability of intersecting $X[m+s,\tau_{\rho})$. 
  By \eqref{eq:tmix}, every $\r$-long part of the walk $X[m+\s,\tau_\rho)$ has a probability of at most $\Cap_\r(U) + n^{-2}$ to hit $U$.
  We divide $X[m+\s,\tau_\rho)$ into $\ceil{(\beta\sqrt{n}-s)/r}$ sub-runs of length $\r$.
  Conditioning on $\lcevent$, the expected numbers of such runs that intersect $U$ is bounded by $(\beta\sqrt{n}/\r)\cdot(3c\beta\r/\sqrt{n})=3c\beta^2$.
  Then, 
  \begin{align*}
    \pr(\lcevent \pand \Cap_\r(\LE(X)) < c\beta \r / \sqrt{n})
    &\leq \pr(\lcevent \pand U \not\subseteq \LE(X)) \\
    &\leq 2Ds/(\beta^2\sqrt{n}) + 3c\beta^2.
  \end{align*}
  
  We now analyse the second case. Denote
  \begin{equation*}
  \scevent = \left\{\Cap_\r(\LE(X[0,m))) < 2c\beta \r / \sqrt{n} \pand \tau_{\rho} \geq \beta\sqrt{n}\right).
  \end{equation*}
  On $\scevent$ a lazy random walk of length $\beta \sqrt{n}$ starting from a stationary vertex hits $\LE(X[0,m))$ with probability at most $2c\beta^2$. 
  Since the distribution of $X_{m+\s}$ conditioned on $X[0,m)$ is close to stationarity (as in \eqref{eq:tmix}) we have
  \begin{equation} \label{eq:lb:suffix:hit:small:prefix}
    \pr(\scevent \pand X[m+\s,\tau_\rho) \cap \LE(X[0,m)) \neq \es) \leq 2c\beta^2 + n^{-2}.
  \end{equation}
  Moreover, the probability that there is a loop of size at least $\s$ in the last $\beta \sqrt{n}$ steps of the walk is smaller than $D\beta^2$.
  Set $N=\floor{(\beta\sqrt{n}-\s)/\r}$.
  Suppose $\scevent$ holds but the event in~\eqref{eq:lb:suffix:hit:small:prefix} does not hold.
  Suppose further that there is no loop of size $s$ in the suffix.
  In that case, for every $1\le i\le N$ we have that
  \begin{equation*}\label{eq:lb:cp:in:lerw}
    \CP(B_i(X[m+\s,\tau_\rho))) \cap A_i(X[m+\s,\tau_{\rho})) \subseteq \LE(X).
  \end{equation*}
  By \cref{cl:cap:sqrt}, there exists a function $f_1(n,\beta)$ satisfying $\lim_{\beta\to 0}\lim_{n\to\infty}f_1(n,\beta)=0$ 
  depending only on $D$, $\alpha$ and $\theta$, such that
  \begin{equation*} \label{eq:lb:suffix:has:large:cap}
    \pr\left(
	    \Cap_\r\left(
        \bigcup_{i=1}^N \CP(B_i(X[m+\s,\tau_\rho)))\cap A_i(X[m+\s,\tau_\rho))
		  \right) \geq \frac{c\beta \r}{\sqrt{n}}
    \right)
    \geq 1 - f_1(n,\beta).
  \end{equation*}
  We sum all errors in the two cases to obtain
  \begin{equation*}
  \pr\left(\Cap_\r(\LE(X)) < c\beta \r / \sqrt{n}\right)
  \leq
    2Ds/(\beta^2\sqrt{n}) + 3c\beta^2
    + D\beta^2 + 2c\beta^2 + n^{-2} + f_1(n,\beta).
  \end{equation*}
  For the upper bound on the length of $\LE(X)$, we use Markov's inequality to obtain
  \begin{equation*}
    \pr(|\LE(X)| \geq \sqrt{n}/\beta^3)
      \le \pr(\tau_\rho \geq \sqrt{n}/\beta^3) \leq \beta.
  \end{equation*}
  Therefore with probability $1-f(n,\beta)$ for $f(n,\beta)$ which satisfies $\lim_{\beta\to 0}\lim_{n\to\infty}f(n,\beta)=0$ we have that
  $\Cap_\r(\LE(X)) \succeq \beta \r/\sqrt{n}$ and $|\LE(X)|\le\sqrt{n}/\beta^3$.
\end{proof}

We now know that the loop-erased random walk from a stationary vertex to the sun behaves nicely, and we turn to analyse the loop-erasure of an independent random walk starting from an independent stationary vertex that terminates when it hits the first loop-erasure.

\begin{claim}\label{cl:pathtopath}
  There exist a nonnegative function $f(n,\beta)$ satisfying $\lim_{\beta\to 0}\lim_{n\to\infty}f(n,\beta)=0$ and a constant $c>0$, both depending only on $D$, $\alpha$ and $\theta$, such that the following holds.
  Let $X$ be a lazy random walk on $\sun{G}_\beta$ starting from a stationary vertex of $G$ and terminated when first hitting $\rho$.
  Let $Y$ be an independent lazy random walk on $\sun{G}_\beta$ starting from an independent stationary vertex of $G$ and terminated when hitting $\LE(X)$.
  Then with probability at least $1-f(n,\beta)$ we have
  \begin{equation*}
    Y_{\tau_{\LE(X)}} \neq \rho \ \pand\ \Cap_\r(\LE(Y)) \geq c\beta^4r/\sqrt{n}\ \pand\ |\LE(Y)| \leq \sqrt{n}/\beta^3.
  \end{equation*}
\end{claim}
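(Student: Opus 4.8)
The plan is to prove the three conjuncts separately, each failing with probability tending to $0$ as $n\to\infty$ and then $\beta\to 0$, and to take a union bound. Write $X'=X[0,\tau_\rho)$, so that as vertex sets $\LE(X)=\LE(X')\cup\{\rho\}$, and condition throughout on the event supplied by \cref{cl:pathtosun} applied to $X'$: that $\Cap_\r(\LE(X'))\ge c\beta\r/\sqrt n$ and $|\LE(X')|\le\sqrt n/\beta^3$. This has probability $\ge 1-f_1(n,\beta)$, and all estimates below are made on it, so effectively $\LE(X')$ is a fixed set with these two properties.

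Two of the conjuncts are quick. For the length, $|\LE(Y)|\le|Y|=\tau_{\LE(X)}\le\tau_\rho$, and since the walk on $\sun{G}_\beta$ moves to $\rho$ with probability exactly $\beta^2n^{-1/2}$ at each step, $\tau_\rho$ is geometric with mean $\sqrt n/\beta^2$, so $\pr(\tau_\rho\ge\sqrt n/\beta^3)\le\beta$ by Markov. For $\{Y_{\tau_{\LE(X)}}=\rho\}$, realize $Y$ by a free lazy walk $\hat Y$ on $G$ from stationarity run alongside an independent geometric death time $\mathsf D$ of parameter $\beta^2n^{-1/2}$; then $\{Y_{\tau_{\LE(X)}}=\rho\}=\{\mathsf D\le\tau_{\LE(X')}(\hat Y)\}$, so
\begin{equation*}
  \pr(Y_{\tau_{\LE(X)}}=\rho)=\E\!\left[1-(1-\beta^2n^{-1/2})^{\tau_{\LE(X')}}\right]\le \frac{\beta^2}{\sqrt n}\,\E_\pi[\tau_{\LE(X')}].
\end{equation*}
Now \cref{cl:cap:mix} (valid since $\r\gg\log n\cdot\tmix(G)$) gives $\pr_w(\tau_{\LE(X')}<2\r)\ge\tfrac13\Cap_\r(\LE(X'))\succeq\beta\q$ for every vertex $w$, whence by the Markov property $\pr_\pi(\tau_{\LE(X')}>2\r k)\le(1-\tfrac13\Cap_\r(\LE(X')))^k$ and $\E_\pi[\tau_{\LE(X')}]\preceq\r/(\beta\q)=\sqrt n/\beta$. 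Hence $\pr(Y_{\tau_{\LE(X)}}=\rho)\preceq\beta$.

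The capacity bound is obtained by rerunning the proof of \cref{cl:pathtosun}, with the buffer/suffix length reduced from $\beta\sqrt n$ to $b:=\beta^4\sqrt n$. The smaller scale is dictated by the fact that $|\LE(X)|$ may be as large as $\sqrt n/\beta^3$: only a walk of length $O(\beta^4\sqrt n)$ is guaranteed (with probability $1-O(\beta)$) to avoid $\LE(X)$, and accordingly $\pr(\tau_{\LE(X)}<b)\preceq \beta^6+b\,\pi(\LE(X))\preceq\beta$, so $Y$ has length at least $b$ with the needed probability, while the last $b$ steps of $Y$ contain a loop of length $>\s$ only with probability $O(\beta^8)$. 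With $m=\tau_{\LE(X)}-b-1$, split $Y$ into a prefix $Y[0,m)$ and a $b$-step suffix, and distinguish, exactly as in \cref{cl:pathtosun}, the case $\Cap_\r(\LE(Y[0,m)))\ge 2c\beta^4\q$ — in which one isolates the initial segment $U$ of $\LE(Y[0,m))$ of capacity $\asymp\beta^4\q$, notes by subadditivity that $U$ was completed by time $m-c\beta^4\sqrt n/(2D)$, and checks that the remaining $O(\beta^4\sqrt n)$ steps miss $U$ with probability $1-o_n(1)-O(\beta^8)$ (using $|U|\le\sqrt n/\beta^3$ and $\Cap_\r(U)\preceq\beta^4\q$) — from the complementary case, in which the suffix misses $\LE(Y[0,m))$ with probability $1-O(\beta^8)$ and, by \cref{cl:cap:sqrt} applied with parameter $\beta^4$, the union over $i$ of the sets $\CP(B_i)\cap A_i$ defined as in~\eqref{eq:Ai:Bi} for the suffix has $\r$-capacity $\ge c\beta^4\q$ and lies inside $\LE(Y)$. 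Summing the errors gives $\Cap_\r(\LE(Y))\ge c\beta^4\r/\sqrt n$ off an event of probability $f(n,\beta)$ with $\lim_{\beta\to0}\lim_{n\to\infty}f(n,\beta)=0$.

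The one genuinely new point — and the main obstacle — is that $Y$ is stopped upon hitting the \emph{set} $\LE(X)$ rather than the single vertex $\rho$, so conditioning on the value of $\tau_{\LE(X)}$ tilts the law of the suffix (unlike in \cref{cl:pathtosun}, where conditioning on $\tau_\rho$ left a genuine free walk on $G$). This is harmless because the tilt is mild: a walk of length $b=\beta^4\sqrt n$ from any vertex hits $\LE(X)$ with probability at most $b\,\pi(\LE(X))\preceq\beta$, so working with the last \emph{full} $b$-block strictly preceding $\tau_{\LE(X)}$ and conditioning on that block avoiding $\LE(X)$ — an event of probability $1-O(\beta)$ — distorts the no-large-loop estimate, the ``miss $\LE(Y[0,m))$'' estimate, and the conclusion of \cref{cl:cap:sqrt} each by a factor at most $1/(1-O(\beta))$, which is absorbed into $f(n,\beta)$. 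As in \cref{cl:pathtosun}, the block is analysed starting $\s$ steps in, so that its initial vertex is within $n^{-2}$ of stationarity in total variation and \cref{cl:cap:sqrt} may be applied after a coupling.
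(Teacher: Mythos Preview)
Your treatment of the length bound and of the event $\{Y_{\tau_{\LE(X)}}=\rho\}$ is correct and essentially the paper's. The divergence is in the capacity bound, and here there is a real gap.

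You rerun the proof of \cref{cl:pathtosun}, splitting $Y$ into a prefix $Y[0,m)$ and a $b$-step suffix with $m=\tau_{\LE(X)}-b-1$. That decomposition works in \cref{cl:pathtosun} only because $\tau_\rho$ is an independent geometric clock: conditioning on $\tau_\rho=T$ leaves $X[0,T)$ a genuine free walk on $G$ of deterministic length, so $m$ is deterministic and the suffix is a free walk segment to which \cref{cl:cap:sqrt} applies after an $\s$-buffer. Here $\tau_{\LE(X)}$ depends on the trajectory of $Y$, so $m$ is not a stopping time and your ``suffix'' is not a free walk segment. Your proposed fix---pass to the last full $b$-block before $\tau_{\LE(X)}$ and note that conditioning a single block to avoid $\LE(X)$ is a $1-O(\beta)$ event---does not close this: which block is ``last'' is itself random (it is block $K-1$ where $K$ is the block containing $\tau_{\LE(X)}$), and conditioning on $K=k+1$ also forces all earlier blocks to avoid $\LE(X)$, which is not an $O(\beta)$ tilt and need not leave $Y_{(k-1)b}$ near stationarity. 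The sentence ``the block is analysed starting $\s$ steps in, so that its initial vertex is within $n^{-2}$ of stationarity'' is exactly what fails under this conditioning.

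The paper sidesteps the issue by using the \emph{prefix} $Y[0,\beta^4\sqrt n]$ rather than the suffix. On the high-probability event $\{\tau_{\LE(X)}>\beta^4\sqrt n\}$ (probability $\ge 1-D\beta$ by a union bound using $|\LE(X')|\le\sqrt n/\beta^3$), this prefix coincides with the unconditioned walk $\hat Y[0,\beta^4\sqrt n]$, to which \cref{cl:cap:sqrt} applies directly. One then shows that $\LE(\hat Y[0,\beta^4\sqrt n-\s])$ survives the full loop-erasure by bounding $\E[\#\{(i,j):\hat Y_i=\hat Y_j,\ i\le\beta^4\sqrt n-\s,\ \beta^4\sqrt n\le j\le\tau_{\LE(X)}\}]\preceq\beta$ on $\{\tau_{\LE(X)}\le\sqrt n/\beta^3\}$. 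This removes any need for a two-case split or for analysing a randomly-located block; your entire \cref{cl:pathtosun}-style dichotomy becomes unnecessary.
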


\begin{proof}
  We denote $W = \LE(X)\sm\{\rho\}$ for brevity.
  By \cref{cl:pathtosun} we have that $\Cap_\r(W)\ge c\beta rn^{-1/2}$ and $|W|\le\beta^{-3}n^{1/2}$ with probability $1-f_2(n,\beta)$, for a constant $c>0$ and a function $f_2$ which satisfies $\lim_{\beta\to 0}\lim_{n\to\infty} f_2(n,\beta)=0$, both depending only on $D$, $\alpha$ and $\theta$.
  We condition on this event and we let $\tau^Y_W$ be the hitting time of the random walk $Y$ in a set $W$.
  Using \cref{cl:cap:mix}, for every $j\geq 0$ we have
  \begin{equation*} \label{eq:avoid:set:capacity}
  \pr(\tau_W^Y > jr) \leq \left(1-\frac{c\beta r}{3\sqrt{n}}\right)^j.
  \end{equation*} 
  Conditioning on $\{\tau_W > j\r\}$, for every $t> jr$ we have $\pr(X_t=\rho) = \beta^2/\sqrt{n}$. 
  We then have
  \begin{align*}
    \pr\left(\tau^Y_\rho < \tau^Y_W\right)
    &\leq \sum_{j=0}^\infty \pr(jr \leq \tau_\rho^Y \leq (j+1)r \pand \tau_W^Y > jr) \\&\leq \sum_{j=0}^{\infty}
      \pr\left(jr < \tau^Y_\rho \leq (j+1)\r \midd \tau^Y_W > jr\right)
      \cdot \pr(\tau^Y_W > jr) \\
    &\leq \sum_{j=0}^{\infty}
      \frac{\beta^2r}{\sqrt{n}}
      \cdot\left(1-\frac{c\beta \r}{3\sqrt{n}}\right)^j 
     \preceq \beta.
  \end{align*}
  We will now show that $\Cap_\r(\LE(Y)) \succeq \beta^4r / \sqrt{n}$ with high probability.
  Indeed, by the union bound, $\pr(\tau^Y_{\LE(X)} \leq \beta^4\sqrt{n}) \le \beta^4\sqrt{n}\cdot D|W|/n + \beta^6 \le D\beta$.
  By Markov's inequality 
  \begin{equation*}
  \pr(\tau^Y_{\LE(X)} > \sqrt{n}/\beta^3) \leq \pr(\tau^Y_\rho > \sqrt{n}/\beta^3) \leq \beta.
  \end{equation*}  
   We thus restrict ourselves to the event 
  \begin{equation}\label{eq:length:bound:second:walk}
    \{\beta^4\sqrt{n} <\tau^Y_{\LE(X)} \le \beta^{-3}\sqrt{n}\}.
  \end{equation}
  We analyse the walk $Y[0,\beta^4 \sqrt{n}]$. By \cref{cl:cap:sqrt}, we have that $\Cap_\r(\LE(Y[0,\beta^4\sqrt{n}])) \succeq \beta^4r/\sqrt{n}$ with probability larger than $1-f_1(n,\beta)$, for $f_1$ which satisfies $\lim_{\beta\to 0}\lim_{n\to\infty} f_1(n,\beta)=0$.
  We would like to show that most of $\LE(Y[0,\beta^4\sqrt{n}])$ survives the loop-erasure of $Y[0,\tau_{\LE(X)}]$.
  On the event in~\eqref{eq:length:bound:second:walk}, the probability that $Y[0,\beta^4\sqrt{n} - \s]$ intersects $Y[\beta^4\sqrt{n},\tau^Y_{\LE(X)}]$ is bounded by 
  \begin{equation*}
    \E\left[
      \#\left\{(i,j) \midd Y_i = Y_j,
        \ 0\leq i \leq \beta^4\sqrt{n} -\s,
        \ \beta^4\sqrt{n}\leq j \leq \tau^Y_{\LE(X)}
      \right\}\right]
    \leq D\beta.
  \end{equation*}
  Finally, the expected number of loops of size larger than $\s$ in the first $\beta^4 \sqrt{n}$ is smaller than $D\beta^8$.
  When all these events occur, we have that $\LE(Y[0,\beta^4\sqrt{n}-\s])\cap Y[0,\beta^4\sqrt{n}] \subseteq \LE(Y)$.
  We therefore have that with probability larger than $1-f(n,\beta)$ that $\Cap(\LE(Y)) \geq c\beta^4r/\sqrt{n}$, that  $Y_{\tau_{\LE(X)}} \neq \rho$ and $|\LE(Y)| \leq \sqrt{n}/\beta^3$, for $f$ which satisfies $\lim_{\beta\to 0}\lim_{n\to\infty} f(n,\beta)=0$.
\end{proof}

We are now ready to prove the main theorem of this section.

\begin{proof}[Proof of \cref{thm:lb}]
  Let $\eps>0$.
  Let $\sun{G}_\beta$ be the sunny network associated with $G$ (with parameter $\beta$, to be chosen later). We couple $\UST(G),\UST(\sun{G}_\beta)$ as described in the beginning of the section.
  Sample two independent stationary points $u$ and $v$ from $V$, and let $\phi$ and $\sun\phi$ be the simple paths between them in $\UST(G),\UST(\sun{G}_\beta)$, respectively.
  It follows that $\phi=\sun\phi$ if and only if $\rho\notin\sun\phi$.
  By \cref{cl:pathtosun,cl:pathtopath} there exist $c=c(D,\alpha,\theta)>0$ and $\beta=\beta(\eps)>0$ such that for large enough $n$,
  \begin{equation*}
    \pr\left(\phi=\sun\phi
             \pand |\sun\phi|\le 2\sqrt{n}\beta^{-3}
             \pand \Cap_\r(\sun\phi)\ge c\beta^4rn^{-1/2}\right)
    \ge 1-\eps.
  \end{equation*}
  The result follows by taking $A=2\beta^{-3}$ and $\chi=c\beta^4$.
\end{proof}

\begin{proof}[Proof of the lower bound in \cref{thm:main}]
  Let $\eps>0$, let $u,v$ be two independent stationary points and let $\phi$ be the unique simple path between them in $\UST(G)$.
  By~\eqref{eq:cap:ub} and \cref{thm:lb} we have that with probability at least $1-\eps$,
  \begin{equation*}
    \diam(\UST(G)) \ge |\phi|
    \ge \frac{\Cap_\r(\phi) \cdot n}{\r D}
    \ge \frac{\chi\sqrt{n}}{D}.\qedhere
  \end{equation*}
\end{proof}

%%%%%%%%%%%%%%%%%%%%%%%%%%%%%%%%%%%%%%%%%%%%%%%%%%%%%%%%%%%%%%%%%%%%%%%%%%%%%%%
\section{Upper bound}\label{sec:ub}
Let $G=(V,E)$ be a connected graph on $n$ vertices, satisfying \ref{A:bal}, \ref{A:mix} and \ref{A:esc} with parameters $D,\alpha,\theta$ respectively.
The goal of this section is to prove the upper bound of \cref{thm:main}.
To do so, we will show that under certain size and capacity assumptions on a given set of vertices $W$, the diameter of the $\UST$ of $G/W$ --- the graph obtained from $G$ by contracting $W$ into a single vertex --- is of order at most $\sqrt{n}$.
This will be helpful to bound the diameter of $\UST(G)$ by setting $W$ to be the set of vertices in the unique simple path in $\UST(G)$ between two independent stationary vertices which satisfies, according to \cref{sec:lb}, the necessary assumptions.

Let $\Cap_\r(\cdot)$ denote the $\r$-capacity as defined in~\eqref{eq:cap}, where $\r=\r(\alpha)$ is as defined in~\eqref{eq:rs}.

\begin{theorem}\label{thm:ub}
  For every $\chi,A,\eps>0$ there exists $C=C(D,\alpha,\theta,\chi,A,\eps)$ such that if $W$ is a vertex set satisfying $|W|\le A\sqrt{n}$ and $\Cap_r(W)\ge \chi r/\sqrt{n}$ then
  \begin{equation*}
    \pr(\diam(\UST(G/W)) \ge C\sqrt{n}) \le \eps.
  \end{equation*}
\end{theorem}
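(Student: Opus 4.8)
The plan is to condition on the contracted set $W$ playing the role of ``$\infty$'' and to build a stationary process distributed as $\UST(G/W)$ using a Poissonized Aldous--Broder algorithm on random walk excursions from $W$, as sketched in \cref{sec:proofoutline}. Concretely, let $\cW$ be the space of lazy random walk excursions from $W$ to $W$, and consider a Poisson point process on $\cW\times\RR$ whose intensity is (excursion measure) $\times$ (Lebesgue). For a fixed time $t$, concatenate the excursions with labels exceeding $t$ in order of label; the Aldous--Broder algorithm run on this concatenated walk (started at $W$, which is contracted to a single vertex) outputs a sample of $\UST(G/W)$. Because the Poisson process is invariant under shifting the $\RR$-coordinate, this yields a stationary (in $t$) family of $\UST(G/W)$-distributed trees, and crucially a \emph{monotone coupling}: as $t$ decreases, more excursions are added, vertices get re-attached, and one has a first-passage-percolation-type recursion for how far the tree can reach. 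The hypothesis $\Cap_r(W)\ge \chi r/\sqrt n$ guarantees that a random walk started anywhere returns to $W$ within $\asymp r$ steps with probability $\asymp r/\sqrt n$ (via \cref{cl:cap:mix}), so a typical excursion has length $\asymp\sqrt n$ and, by the spread-out-ness of $W$ together with \ref{A:esc}, its loop-erasure has length comparable to its total length; hence each excursion contributes $O(\sqrt n)$ to the height of the tree.

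The main step is then to establish, for the height $\height$ of the component of a fixed vertex in this tree, a bound of the form $\pr(\height \ge \ell)\preceq \ell^{-1}$ for $\ell$ of order $\sqrt n$, in the spirit of \cite{KN09} and \cite{Hut18+}. First I would set up the recursive inequality: the event $\{\height\ge\ell\}$ forces some vertex at height $\asymp \ell/2$ from $W$ in the tree to itself have a subtree of height $\asymp \ell/2$ that was created by excursions in a disjoint time window; using the stationarity and the monotone coupling one expresses $\pr(\height \ge 2\ell)$ in terms of $\pr(\height \ge \ell)^2$ times a combinatorial factor counting the (random, $O(\sqrt n)$-many) candidate ``branch points'', which are themselves endpoints of loop-erased excursions. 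The spread-out-ness of $W$ is used repeatedly here: it controls the number of excursions needed to reach height $\ell$ (roughly $\ell/\sqrt n$ of them), it ensures excursion loop-erasures do not backtrack too much, and it keeps the expected number of pairs of excursions that interact bounded (an analogue of the $\Close_r$ estimate in \cref{cl:close}). Iterating the recursion from a base case at scale $\ell\asymp\sqrt n$ (where the bound is trivial since $|V(G/W)|\le n$) yields $\pr(\height\ge A'\sqrt n)\preceq (A')^{-1}$.

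Finally, to pass from the single-vertex height bound to the diameter: the diameter of $\UST(G/W)$ is at most twice the maximum over vertices $x$ of the height of $x$'s component, i.e.\ of the tree-distance from $x$ to $W$. By the union bound over the at most $n$ vertices, $\pr(\diam(\UST(G/W))\ge 2\ell)\le n\cdot \pr(\height\ge \ell)\preceq n\ell^{-1}\cdot(\text{iterated factor})$; choosing $\ell=C\sqrt n$ with $C=C(D,\alpha,\theta,\chi,A,\eps)$ large enough makes this at most $\eps$. The hard part will be making the recursive inequality rigorous---in particular, identifying the correct ``branch point'' structure in the Poissonized Aldous--Broder tree so that the two sub-events at scale $\ell$ are genuinely conditionally independent given the branch point, and controlling the error terms coming from excursion--excursion interactions uniformly in $n$; this is where the capacity lower bound on $W$ and the bubble condition \ref{A:esc} must be combined carefully, and it will occupy the bulk of \cref{sec:recursiveinequality}.
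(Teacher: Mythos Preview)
Your overall architecture---Poissonized Aldous--Broder on $W$-excursions, stationarity in the time label, and a recursive inequality for the height---matches the paper. But two concrete points diverge from what actually makes the argument close.

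First, the recursion is not a squaring recursion. The paper does \emph{not} bound $\pr(\height\ge 2\ell)$ by $\pr(\height\ge\ell)^2$ times a branch-point count. Instead it defines
\[
\cQ_W(\ell)=\max_{u\in V}\pr\bigl(\height(\past^{W_u}(u))\ge\ell\bigr),\qquad W_u=W\cup\{u\},
\]
and proves (\cref{cl:q:ind}) the \emph{linear} recursion $\cQ_W(\ell)\le C_2/\ell+\tfrac14\,\cQ_W(\ell/3)$. The additive $C_2/\ell$ term is simply the probability that a trajectory emanates from $u$ in a time window of length $\zeta\asymp|W|/\ell$. The factor $\tfrac14$ (rather than a second copy of $\cQ_W$) comes from an effective-conductance argument (\cref{lem:bad:bd}): all but $O(\xi\ell)$ of the candidate branch-points $v$ at distance $\asymp\ell$ from $u$ have $\ceff(W_u\lr\hat\fut^{W_u}(v))\ge\xi\ell\Delta(G)$, so an independent negative-time trajectory hits the path from $v$ to $u$ with high probability; this supplies the conditional independence and a multiplicative constant that can be tuned below $1/4$. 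Only one factor of $\cQ_W(\ell/3)$ survives, via the stochastic domination of \cref{lem:stoc:dom}. This linear recursion solves directly to $\cQ_W(\ell)\le C_3/\ell$ with no base-case input needed.

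Second, and this is the genuine gap, your final union bound does not close. You union over all $n$ vertices and obtain $n\cdot\ell^{-1}$; at $\ell=C\sqrt n$ this is $\sqrt n/C$, which is not $\le\eps$ for any fixed $C$ (and your vague ``iterated factor'' cannot help, since you have just stated the tail is only $\asymp(A')^{-1}$). Notice that you never use the hypothesis $|W|\le A\sqrt n$. The point is that $\cQ_W(\ell)$ already bounds the probability that the \emph{entire} subtree rooted at a single $u\in W$ has height $\ge\ell$, so one only needs to union over the $|W|$ roots (\cref{lem:mns}):
\[
\pr(\height(\cT_W)\ge\ell)\le |W|\cdot\cQ_W(\ell)\le A\sqrt n\cdot C_3/\ell,
\]
which at $\ell=C\sqrt n/2$ gives $2AC_3/C\le\eps$. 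This is exactly where $|W|\le A\sqrt n$ enters the proof; defining the recursive quantity as the height of an entire rooted subtree (with the extra wiring $W_u$), rather than the distance of a single vertex to $W$, is what makes the union bound affordable.
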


For a nonempty vertex set $W$, denote
\begin{equation*}
  \pvec{W}^t(u,v) = \pr(X_t = v \pand X[0,t] \cap W = \es \mid X_0=u),
\end{equation*}
and define the $W$-\defn{bubble sum} by
\begin{equation*}
  \bub_W(G) := \sum_{t=0}^{\infty}(t+1)\sup_{v\in V}\pvec{W}^t(v,v).
\end{equation*}
As the random walk on $G$ is an irreducible Markov chain on a finite state space, we have that $\pr(X[0,t]\cap W = \es)$ decays exponentially in $t$. Hence, $\bub_W(G)$ is finite.

\begin{claim}\label{cl:bubw:walk:bd}
  Let $W$ be a nonempty vertex set.
  Let $\ell\ge 1$ be an integer and let $\langle \gamma_t \rangle_{t=0}^\ell$ be a simple path of length $\ell$, such that $v=\gamma_0,\ldots,\gamma_{\ell-1}\notin W$ and $\gamma_\ell=u\in W$.
  Then, for a lazy random walk $X$ from $v$ to $W$,
  \begin{equation*}
    \E[\tau_u \mid \LE(X[0,\tau_W]) = \gamma,\ \tau_u = \tau_W] \leq \ell\cdot\bub_W(G).
  \end{equation*}
\end{claim}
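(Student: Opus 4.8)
The plan is to use the loop decomposition of a random walk conditioned on its loop-erasure, and then to bound the expected length of each loop by the $W$-bubble sum $\bub_W(G)$. Write $\tau_W$ for the hitting time of $W$ by $X$. On the event $\{\LE(X[0,\tau_W])=\gamma\}$ we have $X_{\tau_W}=\gamma_\ell=u$, so $\tau_u=\tau_W$ there; hence the assertion is equivalent to $\E[\tau_W\mid\LE(X[0,\tau_W])=\gamma]\le\ell\,\bub_W(G)$. Let $0=\lambda_0<\lambda_1<\dots<\lambda_\ell=\tau_W$ be the times contributing to the loop-erasure, so that $X_{\lambda_k}=\gamma_k$, and let $E_k:=X[\lambda_k,\lambda_{k+1}-1]$, a closed walk from $\gamma_k$ to $\gamma_k$; then $\tau_W=\sum_{k=0}^{\ell-1}(\lambda_{k+1}-\lambda_k)=\ell+\sum_{k=0}^{\ell-1}|E_k|$, with $|E_k|$ the number of steps of $E_k$. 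The first task is to identify the conditional law of $(E_0,\dots,E_{\ell-1})$ given $\{\LE(X[0,\tau_W])=\gamma\}$. Set $W_k:=W\cup\{\gamma_0,\dots,\gamma_{k-1}\}$ and, for a finite walk $L=(z_0,\dots,z_m)$ in $G$, let $w(L):=\prod_{i=0}^{m-1}\vect{p}^1(z_i,z_{i+1})$. I would prove by a direct bijective argument that $\omega\mapsto(E_0(\omega),\dots,E_{\ell-1}(\omega))$ is a bijection between the finite walks $\omega$ starting at $\gamma_0$ that first hit $W$ at their last vertex and have $\LE(\omega)=\gamma$, and the tuples $(E_0,\dots,E_{\ell-1})$ in which each $E_k$ is a closed walk from $\gamma_k$ to $\gamma_k$ all of whose vertices lie in $V\sm W_k$; the inverse map concatenates $E_0$, the step $\gamma_0\to\gamma_1$, $E_1$, the step $\gamma_1\to\gamma_2$, $\dots$, $E_{\ell-1}$, the step $\gamma_{\ell-1}\to\gamma_\ell$, and one verifies by induction on $k$ that the loop-erasure of this concatenation equals $\gamma$, using that $E_{k'}$ avoids $\gamma_k$ whenever $k'>k$. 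Since $\pr(X[0,\tau_W]=\omega)=w(\omega)$ for every such $\omega$, and $w(\omega)$ factorises over the blocks of this decomposition, it follows that, conditionally on $\{\LE(X[0,\tau_W])=\gamma\}$, the walks $E_0,\dots,E_{\ell-1}$ are independent and $E_k$ has law proportional to $w(L)$ over all closed walks $L$ from $\gamma_k$ to $\gamma_k$ with vertices in $V\sm W_k$. It then suffices to prove $\E[|E_k|\mid\LE(X[0,\tau_W])=\gamma]\le\bub_W(G)-1$ for each $k$.

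For this, note that the total weight of the closed walks $L$ from $\gamma_k$ to $\gamma_k$ of length $m$ with vertices in $V\sm W_k$ is $\pr_{\gamma_k}(X_m=\gamma_k,\ X[0,m]\cap W_k=\es)$, which is $\le\pvec{W}^m(\gamma_k,\gamma_k)$ since $W\subseteq W_k$, and which equals $1$ for $m=0$ (the empty loop). Hence the normalising constant $Z_k$ of the conditional law of $E_k$ satisfies $Z_k\ge 1$, and
\begin{align*}
  \E\bigl[|E_k|\mid\LE(X[0,\tau_W])=\gamma\bigr]
  &=\frac{1}{Z_k}\sum_{m\ge 1}m\sum_{|L|=m}w(L) \\
  &\le\sum_{m\ge 1}m\,\pvec{W}^m(\gamma_k,\gamma_k)
  \le\bub_W(G)-1,
\end{align*}
where the last inequality holds because $\bub_W(G)=\sum_{m\ge 0}(m+1)\sup_v\pvec{W}^m(v,v)\ge 1+\sum_{m\ge 1}m\,\pvec{W}^m(\gamma_k,\gamma_k)$, using $\pvec{W}^0(v,v)=1$ for $v\notin W$. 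Summing over $0\le k<\ell$ and recalling $\tau_W=\ell+\sum_k|E_k|$ yields $\E[\tau_W\mid\LE(X[0,\tau_W])=\gamma]\le\ell+\ell(\bub_W(G)-1)=\ell\,\bub_W(G)$, as required.

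The main obstacle will be establishing the loop decomposition precisely: one must check that $E_k$ is forced to avoid $W$ together with the \emph{earlier} loop-erasure vertices $\gamma_0,\dots,\gamma_{k-1}$ (while it may revisit later vertices of $\gamma$, and may revisit $\gamma_k$ itself), and — crucially, so as not to lose a multiplicative constant — that $E_k$ is allowed to be the empty loop, which is exactly what makes $Z_k\ge 1$ and lets the ``$+\ell$'' coming from the $\ell$ edges of $\gamma$ be absorbed by the per-loop ``$-1$''. The remaining steps are routine manipulations of $\bub_W(G)$.
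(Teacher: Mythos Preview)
Your proof is correct and follows essentially the same approach as the paper: both use the loop decomposition of the walk conditioned on its loop-erasure $\gamma$, observe that the $k$-th loop is a closed walk at $\gamma_k$ avoiding $W\cup\gamma[0,k-1]$ with law proportional to its random-walk weight, use the trivial lower bound $1$ on the normalising constant (from the empty loop), and bound the expected loop length by the $W$-bubble sum. Your bookkeeping, splitting $\tau_W=\ell+\sum_k|E_k|$ and bounding $\E[|E_k|]\le\bub_W(G)-1$, is slightly more explicit than the paper's, which directly bounds $\E[\lambda_{k+1}-\lambda_k]\le\bub_W(G)$, but the arguments are the same.
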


\begin{proof}
  We set $\loops_k^\gamma(W)$ to be the set of loops (closed walks) starting and ending at $\gamma_k$ that do not hit $\gamma[0,k-1] \cup W$. 
  Let $\langle Y_t\rangle$ be the lazy random walk on $G$ and consider the following weight $\weight$ assigned to every finite loop $\phi \in \loops_k^\gamma(W)$,
  \begin{equation*}
    \weight(\phi)
      = \pr_{\gamma_k}(Y[0,|\phi|]=\phi)
      = \prod_{i=0}^{|\phi|-1}\vect{p}^1(\phi_i,\phi_{i+1}).
  \end{equation*}
  Denote
  \begin{equation*}
    \partition_k := \sum_{\phi \in \loops_k^\gamma(W)}\weight(\phi).
  \end{equation*}
  The quantity $\partition_k$ equals to the expected number of visits to $\gamma_k$ before hitting $\gamma[0,k-1]\cup W$. As the random walk on $G$ is an irreducible Markov chain on a finite state space, $\partition_k$ is finite.
  We observe that $\partition_k \geq 1$ since the loop of length $0$ contributes $1$ to $\partition_k$.
  Recall that $\langle \lambda_i(X[0,\tau_W])\rangle_i$ are the times contributing to the loop-erasure of $X[0,\tau_W]$.
  Conditioning on $X[0,\lambda_k(X[0,\tau_W])]$, on $\LE(X[0,\lambda_k(X[0,\tau_W])])=\gamma[0,k]$ and on $\tau_u = \tau_W$, the conditional distribution of $X[\lambda_k(X[0,\tau_W]),\tau_u]$ is equal to the distribution of a random walk started at $X_{\lambda_k(X[0,\tau_W])}$ conditioned to hit $u$ before it hits any other vertex of $W$ or any vertex of $\gamma[0,k-1]$.
  Therefore,
  \begin{align*}
    &\pr_v(X[\lambda_k,\lambda_{k+1}-1] = \phi
             \mid \LE(X[0,\tau_W]) = \gamma,\ \tau_u = \tau_W) \\
    &= \frac{\weight(\phi)}{\partition_k}\cdot \ind_{\{\phi \in \loops_k^\gamma(W)\}}
      \leq \weight(\phi)\cdot \ind_{\{\phi \in \loops_k^\gamma(W)\}}.  
  \end{align*}
  Summing over all $\phi$ of length $m$ we have that
  \begin{equation*}
    \pr_v\left(\lambda_{k+1}(X[0,\tau_u]) - \lambda_k(X[0,\tau_u]) - 1
    = m \midd \LE(X[0,\tau_W]) = \gamma,\ \tau_u = \tau_W\right)
    \leq \sup_{v\in V}\pvec{W}^m(v,v).
  \end{equation*}
  Hence,
  \begin{align*}
    &\E_v\left[\lambda_{k+1}(X[0,\tau_u]) - \lambda_k(X[0,\tau_u])
                 \mid \LE(X[0,\tau_W]) = \gamma,\ \tau_u = \tau_W\right]\\
    &\leq\sum_{m=0}^{\infty}(m+1)\sup_{v\in V}\pvec{W}^m(v,v)=\bub_W(G).
  \end{align*}
  We conclude that
  \begin{align*}
    &\E_v[\tau_u \mid \LE(X[0,\tau_W]) = \gamma,\ \tau_u =  \tau_W] \\
    &= \sum_{k=0}^{\ell-1}
         \E_v\left[\lambda_{k+1}(X[0,\tau_u]) - \lambda_k(X[0,\tau_u])
                     \mid \LE(X[0,\tau_W]) = \gamma,\ \tau_u = \tau_W\right]
    \leq \ell\cdot\bub_W(G).\qedhere
  \end{align*}
\end{proof}

Recall that $G/W$ is the graph obtained from $G$ by contracting the vertex set $W$ into a single vertex.  We think of the edge set of $G/W$ as the same edge set as that of $G$, by remembering the original endpoints of each edge (self loops may be created).
We think of $\UST(G/W)$ as an \emph{oriented} random tree, in which the edges are oriented towards $W$. 
The graph $\cT_W=(V,E(\UST(G/W)))$ which consists of the edges of $\UST(G/W)$ on the vertices of $G$ is therefore an oriented rooted forest (a disjoint union of oriented rooted trees).
If $W=\{w_1,\ldots,w_k\}$ we may write $\cT_{w_1,\ldots,w_k}$ instead. 

For any oriented rooted forest $T$ and a vertex $v$ denote by $\past^T(v)$ the \defn{past} of $v$ in $T$, namely, the tree spanned by the set of vertices that have a directed path \emph{to} $v$ in $T$. Similarly, denote by $\fut^T(v)$ the \defn{future} of $v$ in $T$, namely, the path spanned by the set of vertices that have a directed path \emph{from} $v$ in $T$.

For any graph $G$, a vertex $v$ in $G$ and an integer $\ell\ge 0$, denote the graph distance ball of radius $\ell$ around $v$ in $G$ by $\ball_G(v,\ell)$.

\begin{claim}\label{cl:hang:ball}
  Let $W$ be a nonempty vertex set and $u\in W$.  Then for every integer $\ell\ge 1$,
  \begin{equation*}
    \E\left[\left|\ball_{\cT_W}(u,\ell)\right|\right] \leq 8D\ell\cdot\bub_W(G).
  \end{equation*}
\end{claim}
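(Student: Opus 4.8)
The plan is to combine three ingredients: Pemantle's identification of $\UST$ paths with loop-erased walks, \cref{cl:bubw:walk:bd}, and reversibility of the lazy random walk $X$ on $G$.

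\emph{Reduction.} I would first write $\E[|\ball_{\cT_W}(u,\ell)|]=\sum_{v\in V}\pr(\dist_{\cT_W}(u,v)\le\ell)$. Since $\cT_W$ is an oriented rooted forest in which every vertex of $W$ is a sink, for $v\ne u$ the vertices $u$ and $v$ lie in a common component of $\cT_W$ at distance at most $\ell$ exactly when the directed path out of $v$ reaches $u$ in at most $\ell$ steps. By Pemantle's theorem applied to the network $G/W$ (equivalently, Wilson's algorithm rooted at $W$), this directed path is distributed as $\LE(X[0,\tau_W])$ for $X$ started at $v$; it ends at $X_{\tau_W}$ and has length $|\LE(X[0,\tau_W])|$. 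Hence
\begin{equation*}
  \E\big[|\ball_{\cT_W}(u,\ell)|\big]=1+\sum_{v\ne u}\pr_v\!\big(\tau_u=\tau_W \pand |\LE(X[0,\tau_W])|\le\ell\big).
\end{equation*}

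\emph{Trading the loop-erasure length for a length bound on the walk.} This is the key step, since the event $\{|\LE(X[0,\tau_W])|\le\ell\}$ behaves badly under time reversal. On the event $\{X_{\tau_W}=u \pand |\LE(X[0,\tau_W])|\le\ell\}$ I would condition further on the realized loop-erasure $\gamma=\LE(X[0,\tau_W])$: it is a simple path from $v$ to $u$, all of whose vertices but the last lie outside $W$, and $|\gamma|\le\ell$, so \cref{cl:bubw:walk:bd} yields $\E_v[\tau_W\mid \LE(X[0,\tau_W])=\gamma]\le|\gamma|\,\bub_W(G)\le\ell\,\bub_W(G)$. Averaging over $\gamma$ gives $\E_v[\tau_W\mid X_{\tau_W}=u,\,|\LE(X[0,\tau_W])|\le\ell]\le\ell\,\bub_W(G)$, so Markov's inequality gives $\pr_v(\tau_W>2\ell\,\bub_W(G)\mid X_{\tau_W}=u,\,|\LE(X[0,\tau_W])|\le\ell)\le 1/2$, and therefore
\begin{equation*}
  \pr_v\!\big(\tau_u=\tau_W \pand |\LE(X[0,\tau_W])|\le\ell\big)\le 2\,\pr_v\!\big(\tau_u=\tau_W \pand \tau_u\le 2\ell\,\bub_W(G)\big).
\end{equation*}

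\emph{Reversibility and conclusion.} It remains to bound $S:=\sum_{v\ne u}\pr_v(\tau_u=\tau_W \pand \tau_u\le M)$ with $M:=2\ell\,\bub_W(G)$; only $v\notin W$ contribute. Expanding each term as a sum over walks $v=w_0,w_1,\dots,w_t=u$ of length $t\le M$ with $w_1,\dots,w_{t-1}\notin W$, reversibility of the lazy walk ($\pi(x)\vect{p}^1(x,y)=\pi(y)\vect{p}^1(y,x)$) turns each term, after multiplication by $\pi(v)$, into $\pi(u)$ times the probability that a lazy walk $\hat X$ started at $u$ satisfies $\hat X_1,\dots,\hat X_{t-1}\notin W$ and $\hat X_t=v$. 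Summing over $v\notin W$ and $1\le t\le M$ collapses this to $\pi(u)\sum_{t=1}^{M}\pr_u(\hat X_1,\dots,\hat X_t\notin W)\le\pi(u)M$; dividing by $\min_{v}\pi(v)$ and using $\pi(u)/\pi(v)=\d(u)/\d(v)\le\drat(G)\le D$ (from \ref{A:bal}) gives $S\le DM$. Combining the three steps,
\begin{equation*}
  \E\big[|\ball_{\cT_W}(u,\ell)|\big]\le 1+2DM=1+4D\ell\,\bub_W(G)\le 8D\ell\,\bub_W(G),
\end{equation*}
where the last inequality uses $D,\ell\ge1$ and $\bub_W(G)\ge1$ (the $t=0$ summand of the bubble sum, valid since $W\ne V$).

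The only genuinely new point — and the main obstacle — is the second step: one cannot reverse the event $\{|\LE(X[0,\tau_W])|\le\ell\}$ directly, because loop-erasure length is not invariant under path reversal, so \cref{cl:bubw:walk:bd} is invoked together with Markov's inequality precisely to pass first to the reversal-friendly event $\{\tau_W\le 2\ell\,\bub_W(G)\}$; after that the argument is a routine reversibility computation.
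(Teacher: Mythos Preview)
Your proof is correct and follows essentially the same route as the paper's: the reduction via Wilson's algorithm, the use of \cref{cl:bubw:walk:bd} with Markov's inequality to trade the loop-erasure length constraint for a hitting-time bound, and then a reversibility computation summed over $v$. The only cosmetic difference is in how the reversibility step is bookkept (the paper phrases the reversed event as $\{\tau_v=t<\tau_W^+\}$, you sum over walks directly), but the structure and the constants are identical.
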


\begin{proof}
	Let $v\in V\setminus W$.
  Let $X$ be a random walk on $G$ starting from $v$ and let 
  $J^\ell(u)$ be the event that $\tau_u = \tau_{W}$ and that $|\LE(X[0,\tau_{W}])| \leq \ell$.
  By Wilson's algorithm, $\pr_v(J^\ell(u))$ is the probability that $v$ is in $\ball_{\cT_W}(u,\ell)$.
  For every path $\gamma$ such that ${}\pr_v(\LE(X[0,\tau_W]) = \gamma \pand \tau_u = \tau_W) > 0$ we have, by \cref{cl:bubw:walk:bd} and Markov's inequality, that
  \begin{equation*}
    \pr_v\left(
      \tau_u > 2|\gamma|\cdot\bub_W(G)
        \midd \LE(X[0,\tau_W]) = \gamma,\ \tau_u=\tau_{W}
    \right)
    \leq \frac{1}{2}.
  \end{equation*}
  Averaging over all $\gamma$ with $|\gamma|\leq\ell$ we get that
  \begin{equation*}
    \pr_v\left(\tau_u > 2\ell\cdot\bub_W(G)
               \midd J^\ell(u) \right) \leq \frac{1}{2}.
  \end{equation*}
  Thus, 
  \begin{align*}
    \pr_v(J^\ell(u))
    &\leq 2 \pr_v(\tau_u=\tau_W \pand \tau_u \leq 2\ell\cdot\bub_W(G)) \\
    &\leq 2\sum_{t=0}^{2\ell\cdot\bub_W(G)}\pr_v(\tau_u=\tau_W=t)\\
    &\leq 2D
      \sum_{t=0}^{2\ell\cdot\bub_W(G)}\pr_u(\tau_v = t < \tau_W^+),
  \end{align*}
  where the last inequality follows by reversing time.
  Summing over all $v$ we obtain
  \begin{align*}
    \E\left[\left|\ball_{\cT_W}(u,\ell)\right|\right]
    &= 1 + \sum_{v\in V\sm W}\pr_{v} (J^\ell(u))\\
    &\le 1 + 2D\sum_{t=0}^{2\ell\cdot \bub_W(G)}\sum_{v\in V\sm W}\pr_u(\tau_v=t<\tau_W^+)
     \le 8D\ell\cdot\bub_W(G).\qedhere
  \end{align*}
\end{proof}

\subsection{Effective conductance}
For every two disjoint sets $W$ and $S$, we define the \defn{effective conductance} between them to be
\begin{align*}
\ceff(W \lr S) = \Vol(W)\cdot \pr_W(\tau_S<\tau_W^+)
= 2|E|\cdot\sum_{u\in W} \pi_u\cdot\pr_u(\tau_S<\tau_W^+),
\end{align*}
where $\Vol(W)=\sum_{v\in W}\d(v)$ is the \defn{volume} of $W$,
and $\pr_W(\cdot)$ denotes the probability measure of the lazy random walk on $G/W$ conditioned on $X_0=W$.
Note that reversibility ensures that $\ceff(W\lr S)=\ceff(S\lr W)$.
Define further
\begin{equation*}
\cM_W(S) = \sum_{u,v\in S}\d(u)\mathcal{G}_W(u,v),
\end{equation*}
where $\mathcal{G}_W(u,v)$ is the \defn{Green function} of the random walk killed when hitting $W$. That is, 
\begin{equation*}
\mathcal{G}_W(u,v)
= \E_u\left[\sum_{t=0}^{\infty}\ind_{\{X_t = v \pand t < \tau_W\}}\right].
\end{equation*}
Note that $\cM_W(S)$ is monotone increasing in $S$.
We now use the following lemma from~\cite{Hut18+}*{Lemma 5.5}.
\begin{lemma}\label{lem:ceff:bd}
	Let $W,S \subseteq V$ be two disjoint vertex sets.  Then
	\begin{equation*}
	\ceff(W \lr S) \geq \frac{\left(\sum_{v\in S}\d(v)\right)^2}{\cM_W(S)}.
	\end{equation*} 	
\end{lemma}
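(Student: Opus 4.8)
The plan is to invoke Thomson's principle. Endow $G$ with the edge conductances $c(x,y)=\d(x)\vect{p}^1(x,y)$; these are symmetric by reversibility of the lazy walk, the associated network random walk is exactly the lazy walk, one has $c(x):=\sum_y c(x,y)=\d(x)$, and the corresponding effective conductance is precisely $\ceff(W\lr S)=\sum_{u\in W}\d(u)\pr_u(\tau_S<\tau_W^+)=\Vol(W)\pr_W(\tau_S<\tau_W^+)$, i.e.\ the reciprocal of the effective resistance $\mathcal R_{\mathrm{eff}}(S\lr W)$. Hence it suffices to exhibit a flow of strength $1$ from $S$ to $W$ whose energy is at most $\cM_W(S)/\bigl(\sum_{v\in S}\d(v)\bigr)^2$; we may assume $\cM_W(S)<\infty$, as otherwise there is nothing to prove.

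Write $\Vol(S)=\sum_{v\in S}\d(v)$, let $\mu$ be the probability measure on $S$ with $\mu(v)=\d(v)/\Vol(S)$, and let $f(x)=\sum_{u\in S}\mu(u)\mathcal{G}_W(u,x)$ be the expected number of visits to $x$ strictly before $\tau_W$ of the lazy walk started from $\mu$, so $f\equiv 0$ on $W$. A one-step analysis of this occupation measure gives $f(x)=\mu(x)+\sum_y f(y)\vect{p}^1(y,x)$ for $x\notin W$; dividing by $\d(x)$ and using $\d(y)\vect{p}^1(y,x)=\d(x)\vect{p}^1(x,y)$, the function $g:=f/\d$ satisfies
\begin{equation*}
  \d(x)\Bigl(g(x)-\textstyle\sum_y \vect{p}^1(x,y)g(y)\Bigr)=\mu(x)\quad(x\notin W),\qquad g\equiv 0\text{ on }W.
\end{equation*}
Let $\theta(x,y)=c(x,y)\bigl(g(x)-g(y)\bigr)$ be the flow induced by the potential $g$. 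Its net outflow at a vertex $x$ is $c(x)\bigl(g(x)-\sum_y\vect{p}^1(x,y)g(y)\bigr)=\d(x)\bigl(g(x)-\sum_y\vect{p}^1(x,y)g(y)\bigr)$, which is $\mu(x)$ for $x\notin W$ and therefore sums to $-1$ over $W$; thus $\theta$ is a strength-$1$ flow from the source measure $\mu$ (supported on $S$) into $W$. Contracting $S$ to a single vertex turns $\theta$ into an honest unit flow from $S$ to $W$ whose energy does not exceed the energy of $\theta$ in $G$.

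Finally, by the standard summation-by-parts identity for flows induced by potentials, and since $g\equiv 0$ on $W$,
\begin{equation*}
  \mathcal E(\theta)=\sum_x g(x)\cdot\bigl(\text{net outflow of }\theta\text{ at }x\bigr)
  =\sum_{x\in S}g(x)\mu(x)
  =\frac{1}{\Vol(S)}\sum_{x\in S}f(x),
\end{equation*}
and $\sum_{x\in S}f(x)=\frac{1}{\Vol(S)}\sum_{u\in S}\d(u)\sum_{x\in S}\mathcal{G}_W(u,x)=\cM_W(S)/\Vol(S)$ by the definition of $\cM_W(S)$, so $\mathcal E(\theta)=\cM_W(S)/\Vol(S)^2$. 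Thomson's principle then gives $\ceff(W\lr S)^{-1}=\mathcal R_{\mathrm{eff}}(S\lr W)\le\mathcal E(\theta)=\cM_W(S)/\bigl(\sum_{v\in S}\d(v)\bigr)^2$, which is the assertion. The only points requiring care are choosing the conductance normalization so that the network's effective conductance matches the paper's $\ceff$ exactly, handling the fact that the source of $\theta$ is a measure on $S$ rather than a single vertex (done by the contraction), and recognizing $g=f/\d$ (the normalized occupation measure) as the right potential; the energy computation is then a routine summation by parts. Conceptually the inequality is an equality precisely when the escape probabilities $\pr_v(\tau_W<\tau_S^+)$, $v\in S$, are all equal, convexity accounting for the gap otherwise.
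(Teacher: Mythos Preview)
The paper does not supply a proof of this lemma; it is quoted verbatim from Hutchcroft~\cite{Hut18+}*{Lemma 5.5}. Your argument via Thomson's principle is correct: the choice of conductances $c(x,y)=\d(x)\vect{p}^1(x,y)$ makes the network walk the lazy walk and gives $c(x)=\d(x)$, so the network effective conductance coincides with the paper's $\ceff$; the potential $g=f/\d$ built from the killed Green function yields a flow with sources $\mu$ on $S$ and sink $W$; and the summation-by-parts identity $\mathcal E(\theta)=\sum_x g(x)\,\mathrm{div}\,\theta(x)$ collapses to $\cM_W(S)/\Vol(S)^2$ exactly as you compute. The contraction step handling the distributed source is the right way to pass to a genuine unit $S$--$W$ flow, and since $g\equiv 0$ on $W$ the flow already vanishes on internal $W$-edges, so contracting $W$ costs nothing. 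This is in fact the standard proof, and is essentially how Hutchcroft argues as well.
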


For a random walk $X$ and $t\ge 0$, let
\[X_W[0,t] = \langle X_i \rangle_{i=\tau_{W^c}}^{t \wedge (\tau_{W^+} - 1)},\]
with the convention that if $\tau_{W^c} > \tau_{W^+} - 1$ then $X_W[0,t]$ is empty.
Note that $X_W[0,t] \cap W = \es$. 
The next lemma is analogous to~\cite{Hut18+}*{Lemma 5.6} and is proven using the same techniques. 
\begin{lemma}\label{lem:m:exp}
	Let $W$ be a nonempty vertex set.
	Let $v\in V$ and let $X$ be a lazy random walk. Then,
	\begin{equation*}
	\E_v[\cM_W(X_W[0,t])] \leq 5\Delta(G)\bub_W(G)t.
	\end{equation*}
\end{lemma}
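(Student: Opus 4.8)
The plan is to bound $\E_v[\cM_W(X_W[0,t])]$ by writing $\cM_W(X_W[0,t]) = \sum_{u,u'} \d(u)\mathcal{G}_W(u,u')\ind_{\{u,u'\in X_W[0,t]\}}$ and controlling the expectation term by term using the structure of the excursion $X_W[0,t]$. First I would recall that $X_W[0,t]$ is the portion of the walk between the first exit of $W$ and (before) the first return to $W$, truncated at time $t$; in particular it is a walk avoiding $W$ of length at most $t$. So the set $X_W[0,t]$ can be written as $\{X_{\tau_{W^c}+j} : 0 \le j \le (t\wedge(\tau_{W^+}-1)) - \tau_{W^c}\}$, and in particular $u,u' \in X_W[0,t]$ forces the walk, after leaving $W$, to visit $u$ and $u'$ at two times that differ by at most $t$, without hitting $W$ in between.

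The key step is the following expansion. Fix the vertex $u\in V\sm W$ first visited by $X_W$ at some time, say $u = X_{\tau_{W^c}}$. By the strong Markov property, conditioned on $\{X_{\tau_{W^c}} = u\}$, the walk $X_W[0,t]$ is (a truncation of) the lazy random walk from $u$ killed on hitting $W$. Then, using reversibility of the walk killed at $W$ (which reverses the Green function up to degree factors, $\d(u)\mathcal{G}_W(u,u') = \d(u')\mathcal{G}_W(u',u)$), I would bound
\begin{align*}
  \E_v[\cM_W(X_W[0,t])]
  &\le \sum_{u} \pr_v(X_{\tau_{W^c}}=u) \sum_{u',u''} \d(u')\mathcal{G}_W(u',u'')
       \pr_u(u',u''\in X[0,t],\ t < \tau_{W^+}) \\
  &\le \Delta(G)\sum_{u'} \sum_{u''} \mathcal{G}_W(u',u'')
       \cdot \E_{u'}\bigl[\#\{j\le t : X_j = u'', j < \tau_{W^+}\}\cdot (\text{something})\bigr].
\end{align*}
More precisely, the clean way is: for each ordered pair $(u',u'')$ of vertices appearing in $X_W[0,t]$, charge it to the \emph{first} time $X_W$ hits $u'$; the expected number of subsequent visits to $u''$ before hitting $W$ is at most $\mathcal{G}_W(u',u'')$, and the expected number of times $u'$ is visited in $[0,t]$ before $\tau_{W^+}$ (starting from the entrance point) is, by a time-reversal / bubble-sum argument exactly like the one in \cref{cl:bubw:walk:bd}, at most $\bub_W(G)$ (this is where the $(m+1)\sup_v \pvec{W}^m(v,v)$ sum enters). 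Summing $\d(u')\mathcal{G}_W(u',u'')$ over $u''$ gives $\E_{u'}[\tau_{W^+} \wedge \infty$ time spent$]\le\d(u')\cdot(\text{total Green mass})$, but the cleaner bound uses that $\sum_{u''}\mathcal{G}_W(u',u'')$ summed against the visit counts telescopes; keeping track of the lazy factor $1/2$ and the degree bound $\d(u')\le\Delta(G)$ yields the constant $5\Delta(G)$.

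The main obstacle I expect is the bookkeeping that avoids an extra factor of $t$: a naive union bound over all pairs of times $i,j\le t$ would give $t^2$, and a naive bound over all pairs of \emph{vertices} in the range would lose control of how many distinct vertices appear. The trick — exactly as in~\cite{Hut18+}*{Lemma 5.6} — is to organize the double sum $\sum_{u',u''}\d(u')\mathcal{G}_W(u',u'')\ind_{u',u''\in X_W[0,t]}$ by pairing each occurrence of a vertex $u'$ in the trace with the Green-function weight $\mathcal{G}_W(u',\cdot)$ of its \emph{future} excursion before returning to $W$, so that the $u''$-sum is absorbed by the Green function ($\sum_{u''}\d(u')\mathcal{G}_W(u',u'') \le \Delta(G)\bub_W(G)$ by \cref{cl:bubw:walk:bd}-type reasoning), and the remaining sum over occurrences of $u'$ in $X_W[0,t]$ contributes only a linear-in-$t$ factor because it counts time steps, not pairs. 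Assembling these — the $\pr_v(X_{\tau_{W^c}}=u)$ sum is a probability and disappears, one $\Delta(G)$ comes from $\d(u')\le\Delta(G)$, one $\bub_W(G)$ from the future-excursion Green mass, and the linear $t$ from counting steps, with the residual constant $5$ collecting the lazy-walk factors and a crude slack — gives the stated $\E_v[\cM_W(X_W[0,t])] \le 5\Delta(G)\bub_W(G)t$.
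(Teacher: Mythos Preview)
Your proposal has a genuine gap. The claim $\sum_{u''}\d(u')\mathcal{G}_W(u',u'') \le \Delta(G)\bub_W(G)$ is false: the left side equals $\d(u')\,\E_{u'}[\tau_W]$, the (degree-weighted) expected hitting time of $W$ from $u'$, which in the applications of this lemma is typically of order $\sqrt{n}$, whereas \cref{cl:bubw} shows $\bub_W(G)$ is bounded by a constant depending only on $D,\theta,\chi$. \cref{cl:bubw:walk:bd} does not help here --- it bounds a conditional expectation given a fixed loop-erasure of length $\ell$, not an unconditional Green mass. More fundamentally, the bubble sum $\sum_m(m+1)\sup_v\pvec{W}^m(v,v)$ is built from \emph{return} probabilities, and nothing in your vertex-pair decomposition produces a return: you only ever see transitions $u'\to u''$, never a closed walk $u\to u$. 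Without that, there is no mechanism by which $\bub_W(G)$ can appear on the right-hand side.

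The paper's proof takes a different route: it works with time indices rather than vertex pairs. One upper-bounds the sum over $u,w\in X_W[0,t]$ by a sum over $i,j\in[0,t]$, and then expands $\mathcal{G}_W(X_i,X_j)=\sum_{k\ge 0}\pvec{W}^k(X_i,X_j)$ via an independent walk $Y^i$ started at $X_i$, obtaining a triple sum over $(i,j,k)$. After splitting on $i\le j$ versus $i>j$, reversibility collapses each term to at most $\Delta\sup_u\pvec{W}^{|j-i|+k}(u,u)$ --- and \emph{this} is where the return probability, and hence the bubble sum, enters. A counting argument (for each value $m=|j-i|+k$ there are at most $(2t+1)(m\wedge t+1)$ contributing triples) then gives $\Delta(2t+1)\cdot 2\bub_W(G)\le 5\Delta\bub_W(G)t$. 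The move your sketch is missing is precisely this: the Green function must be \emph{expanded} as a time sum and combined with the walk via reversibility to produce a closed trajectory, not summed out as a spatial marginal.
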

\begin{proof}
	Let $v\in V\setminus W$ and
	denote $\Delta=\Delta(G)$.
	Let $Y^i$ be an independent lazy random walk starting from $X_i$.
	We then have
	\begin{equation*}
	\E_v[\cM_W(X_W[0,t])]
	= \sum_{i=0}^{t}\sum_{j=0}^{t}\sum_{k=0}^{\infty}
	\E_v[\d(X_i)\ind(Y_k^i = X_j\pand (Y^i[0,k]\cup X[0,j])\cap W = \es)].
	\end{equation*}
	As in~\cite{Hut18+}, we divide this sum into two parts, according to whether or not $i\leq j$.  When $i\leq j$ we upper bound by 
	\begin{align*}
	&\sum_{i=0}^{t}\sum_{j=i}^{t}\sum_{k=0}^\infty\sum_{u,w\in V}
	\vect{p}_W^i(v,u)\d(u)\vect{p}_W^{j-i}(u,w)\vect{p}_W^k(u,w)\\
	&\leq \Delta
	\sum_{i=0}^{t}\sum_{j=i}^{t}\sum_{k=0}^\infty\sum_{u,w\in V}
	\vect{p}_W^{i}(v,u)\vect{p}_W^{j-i}(u,w)\vect{p}_W^k(w,u)\\
	&\leq \Delta
	\sum_{i=0}^{t}\sum_{j=i}^{t}\sum_{k=0}^\infty\sum_{u\in V}
	\vect{p}_W^{i}(v,u)\vect{p}_W^{k+j-i}(u,u)\\
	&\leq \Delta
	\sum_{i=0}^{t}\sum_{j=0}^{i}\sum_{k=0}^{\infty}\sup_u
	\vect{p}_W^{k+j-i}(u,u).
	\end{align*}
	Similarly, for the second sum, we upper bound by
	\begin{equation*}
	\Delta\sum_{i=0}^{t}\sum_{j=0}^{i-1}\sum_{k=0}^{\infty}\sup_u
	\vect{p}_W^{i-j+k}(u,u).
	\end{equation*}
	Therefore,
	\begin{equation*}
	\E_v[\cM_W(X_W[0,t])]
	\leq \Delta       
	\sum_{i=0}^{t}\sum_{j=0}^{t}\sum_{k=0}^{\infty}\sup_u 
	\vect{p}_W^{|j-i|+k}(u,u).
	\end{equation*}
	For every pair $b,m$ with $b\le m$ we have at most $2t+1$ suitable values of $i,j,k$ satisfying $|j-i|=b$ and $b+k=m$.  Hence,
	\begin{align*}
	\E_v[\cM_W(X_W[0,t])]
	&\leq \Delta(2t+1)
	\sum_{m=0}^{\infty}\sum_{b=0}^{m\wedge t}\sup_u
	\vect{p}_W^m(u,u) \\
	& \leq \Delta(2t+1)\left(\sum_{m=0}^t (m+1)\sup_u
	\vect{p}_W^{m}(u,u)
	+ \sum_{m=t+1}^{\infty}(t+1)\sup_u
	\vect{p}_W^{m}(u,u)\right) \\
	& \leq \Delta(2t+1)(2\bub_W(G))
	\leq 5\Delta\bub_W(G)t.
	\end{align*} 	
	This proves the lemma for every $v\in V\sm W$. For $v \in W$, we have 
	\begin{align*}
	\E_v[\cM_W(X_W[0,t])]
	&\le \sum_{u\in V\sm W} \vect{p}(v,u) \E_u[\cM_W(X_W[0,t])]\\
	&\le \sum_{u\in V\sm W} \vect{p}(v,u) 5\Delta\bub_W(G)t
	\le 5\Delta\bub_W(G)t. \qedhere
	\end{align*}
\end{proof}

Recall that $\fut^W(v)$ is the future of $v$ in $\cT_W$.
Denote by $\hat\fut^W(v)$ the future of $v$ in $\cT_W$ excluding the unique vertex of $W$ in this future, namely, $\hat\fut^W(v)=\fut^W(v)\sm W$.
The following lemma is an analogue of~\cite{Hut18+}*{Lemma 7.7}.

\begin{lemma}\label{lem:bad:bd}
	Let $W$ be a nonempty vertex set.
	Let $\xi>0$ and let $\ell>0$ be an integer. 
	Then,
	\begin{equation*}\label{eq:hut:low:cap:ball:bound}
	\sup_{u\in W}\E\left[\left|\left\{
	v\in V \midd \dist_{\cT_W}(u,v) \in 
	\left[\frac{\ell}{3},\frac{2\ell}{3}\right]
	\pand 
	\ceff\left(W \lr \hat\fut^W(v)\right) \leq \xi\ell\Delta(G)
	\right\}\right|\right] \leq C_1\xi\ell,
	\end{equation*}	
	for $C_1 = 360D^3\bub_W(G)^3$.
\end{lemma}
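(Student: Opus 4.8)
The plan is to reveal the forest $\cT_W$ via Wilson's algorithm rooted at $W$ in a way that lets us control, for a fixed vertex $v$ at the prescribed distance from $u$, both the event that $v$ lies in the ``shell'' $\dist_{\cT_W}(u,v)\in[\ell/3,2\ell/3]$ and the event that $\ceff(W\lr\hat\fut^W(v))$ is small. First I would fix $u\in W$ and $v\in V$ and run a loop-erased random walk $X$ from $v$ to $W$; by Wilson's algorithm the loop-erasure $\LE(X[0,\tau_W])$ is exactly $\fut^{\cT_W}(v)$, and on the event $\{\tau_u=\tau_W\}$ this path has length $\dist_{\cT_W}(u,v)$. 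So the first event forces $|\LE(X[0,\tau_W])|\in[\ell/3,2\ell/3]$, which by \cref{cl:bubw:walk:bd} and Markov means $\tau_W\le \tfrac{2\ell}{3}\bub_W(G)\cdot$(constant) with probability at least $1/2$ conditionally; this is precisely the kind of estimate already used in \cref{cl:hang:ball}.

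The second ingredient is to lower-bound $\ceff(W\lr\hat\fut^W(v))$ using \cref{lem:ceff:bd}: since $\hat\fut^W(v)=\LE(X[0,\tau_W])\sm W$ and $\sum_{w\in\hat\fut^W(v)}\d(w)\ge \delta(G)\cdot|\hat\fut^W(v)|\succeq \ell$ on the shell event (using \ref{A:bal}), we get
\[
\ceff(W\lr\hat\fut^W(v))\ \ge\ \frac{\bigl(\sum_{w\in\hat\fut^W(v)}\d(w)\bigr)^2}{\cM_W(\hat\fut^W(v))}\ \succeq\ \frac{\ell^2}{\cM_W(\hat\fut^W(v))}.
\]
Hence the bad event $\{\ceff(W\lr\hat\fut^W(v))\le \xi\ell\Delta(G)\}$ together with the shell event forces $\cM_W(\hat\fut^W(v))\succeq \ell/(\xi\Delta(G))$, i.e.\ $\cM_W$ of the loop-erasure is \emph{large}. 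But the loop-erasure $\LE(X[0,\tau_W])$ is a subset of the trace $X_W[0,\tau_W]$, and by monotonicity of $\cM_W$ in its argument, $\cM_W(\hat\fut^W(v))\le \cM_W(X_W[0,\tau_W])$. On the event that $\tau_W$ is at most a constant times $\ell\bub_W(G)$ (from the first paragraph), \cref{lem:m:exp} gives $\E_v[\cM_W(X_W[0,\tau_W])]\preceq \Delta(G)\bub_W(G)\cdot\ell\bub_W(G)=\Delta(G)\bub_W(G)^2\ell$, so by Markov the event $\{\cM_W(X_W[0,\tau_W])\ge c\ell/(\xi\Delta(G))\}$ has probability $\preceq \xi\bub_W(G)^2$ (uniformly in the constants, tracking the $D$'s carefully to land on $360D^3\bub_W(G)^3$).

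Putting it together: for each fixed $v$, the probability that $v$ contributes to the set in the statement is at most a constant times $\xi\bub_W(G)^2\pr_v(\tau_u=\tau_W,\ \tau_W\preceq\ell\bub_W(G))$; summing over $v\in V$, reversing time exactly as in the last display of the proof of \cref{cl:hang:ball} (so that $\sum_v\pr_v(\tau_u=\tau_W=t)\le D\sum_v\pr_u(\tau_v=t<\tau_W^+)\le D$ for each $t$), the sum over the $\preceq\ell\bub_W(G)$ relevant values of $t$ produces the factor $\ell$, giving the bound $C_1\xi\ell$ with $C_1$ of the stated form. The main obstacle I expect is getting the conditioning right: the events ``$\tau_W$ small'' and ``$\cM_W(X_W[0,\tau_W])$ large'' both need to be compared on the same probability space as the shell event and the capacity event, and one must be careful that conditioning on $\LE(X[0,\tau_W])=\gamma$ (as in \cref{cl:bubw:walk:bd}) does not corrupt the unconditional application of \cref{lem:m:exp} — the clean route is to bound $\cM_W$ of the trace \emph{unconditionally} via \cref{lem:m:exp} run up to a deterministic horizon $t\asymp\ell\bub_W(G)$, absorb the event $\{\tau_W>t\}$ into the error using the Markov-inequality step on $\tau_W$, and only then intersect with the shell and capacity events.
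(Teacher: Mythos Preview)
Your approach is the same as the paper's---Wilson's algorithm, \cref{cl:bubw:walk:bd} plus Markov to cap $\tau_W$, \cref{lem:ceff:bd} to convert small conductance into large $\cM_W$, monotonicity of $\cM_W$ to pass from $\LE(X)$ to the trace, \cref{lem:m:exp} to bound $\E[\cM_W]$, and path reversal to sum over $v$. The ingredients are correct; the gap is in how you combine them.

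Your ``Putting it together'' step asserts, for each fixed $v$,
\[
\pr_v(\text{$v$ contributes})\ \preceq\ \xi\,\bub_W(G)^2\cdot\pr_v(\tau_u=\tau_W\le t_0),
\]
where $t_0\asymp\ell\,\bub_W(G)$. This is an unjustified factorization of the intersection $\{\tau_u=\tau_W\le t_0\}\cap\{\cM_W(X_W[0,t_0])\ge\text{threshold}\}$: there is no reason these events should be negatively correlated, and you have flagged this as the obstacle without resolving it. Bounding $\pr_v(\cM_W\ \text{large})$ uniformly in $v$ (via \cref{lem:m:exp} for the walk from $v$) and then summing over all $v$ costs a factor of $n$; bounding instead by $\sum_v\pr_v(\tau_u=\tau_W\le t_0)$ loses the $\xi$. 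Neither bound alone suffices, and you cannot simply take the product.

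The fix---and this is what the paper does---is to reverse paths \emph{before} applying Markov, carrying the $\cM_W$ constraint through the reversal (which is legitimate because $\cM_W(S)$ depends only on the vertex set $S$, preserved under reversal). Concretely,
\[
\pr_v\Bigl(\tau_u=\tau_W=t,\ \cM_W(X_W[0,t])\ge\tfrac{\Delta\ell}{9D^2\xi}\Bigr)
\ \le\ D\,\pr_u\Bigl(X_t=v,\ \tau_W^+>t,\ \cM_W(X_W[0,t])\ge\tfrac{\Delta\ell}{9D^2\xi}\Bigr).
\]
Now sum over $v$: the endpoint constraint $\{X_t=v\}$ disappears, leaving $\pr_u(\cM_W(X_W[0,t])\ge\text{threshold})$, to which \cref{lem:m:exp} and Markov apply for the walk started from $u$. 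Summing then over $t\le 2\bub_W(G)\ell$ produces the factor $\ell$ and the constant $360D^3\bub_W(G)^3$. So the order must be: reverse (with the $\cM_W$ event intact), then sum over $v$, then Markov---not Markov first and reverse afterward.
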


\begin{proof}
	Write $\delta=\delta(G)$ and $\Delta=\Delta(G)$.
	Fix $v\in V\sm W$ and $u\in W$.  For a random walk $X$, let $Z=X[0,\tau_u-1]$.
	Let $F_1$ be the event $\{|\LE(Z)|\in[\ell/3,2\ell/3]\}$.
	Let $F_2$ for the event $\{\tau_u=\tau_W\}$ and $F_2'$ be the event $\{\tau_u=\tau_W\le 2\bub_W(G)\ell\}$.
	Finally, let $F_3$ be the event $\{\ceff(W\lr\LE(Z)) \le \xi\ell\Delta\}$.
	Then
	\begin{equation*}
	\pr\left(\dist_{\cT_W}(u,v) \in [\ell/3,2\ell/3]
	\pand \ceff(W \lr \hat\fut^W(v)) \leq \xi\ell\Delta\right)
	= \pr(F_1\cap F_2\cap F_3).
	\end{equation*}
	By \cref{cl:bubw:walk:bd} and the same reasoning as in \cref{cl:hang:ball} we have that this probability is bounded by
	$2\pr(F_1\cap F_2'\cap F_3)$.
	Since on the event $F_1\cap F_2'\cap F_3$ we have $|\LE(Z)| \geq \ell/3$, it follows from \cref{lem:ceff:bd} that
	\begin{equation*}
	\frac{\ell^2\delta^2}{9\cM_W(Z)}
	\leq \frac{\ell^2\delta^2}{9\cM_W(\LE(Z))}
	\leq \ceff(W \lr \LE(Z)) \leq \xi \ell\Delta. 
	\end{equation*}
	Thus, on this event we have $\cM_W(Z) \geq \frac{\Delta\ell}{9D^2\xi}$.
	Note also that on this event $Z=X_W[0,\tau_u]$.
	Hence, by reversing paths,
	\begin{align*}
	\pr_v(F_1\cap F_2\cap F_3)
	&\le 2\pr_v(F_1\cap F_2'\cap F_3)\\
	&\leq 2\pr_v\left(F_2' \pand
	\cM_W(X_W[0,\tau_u]) \geq \frac{\Delta\ell}{9D^2\xi}\right)\\
	&\le 2\sum_{t=0}^{2\bub_W(G)\ell}
	\pr_v\left(t=\tau_W = \tau_u
	\pand \cM_W(X_W[0,t]) \geq \frac{\Delta\ell}{9D^2\xi}\right)\\
	& \leq 2D\sum_{t=0}^{2\bub_W(G)\ell}
	\pr_u\left(X_t = v,\  \tau_W^+ > t
	\pand \cM_W(X_W[0,t]) \geq  \frac{\Delta\ell}{9D^2\xi}\right).
	\end{align*}
  To conclude, we sum over all $v$ in $V\sm W$ and apply \cref{lem:m:exp} and Markov's inequality to
  obtain that this quantity is bounded by
	\begin{equation*}
	2D\sum_{t=0}^{2\bub_W(G)\ell}
	\pr_u\left(\cM_W(X_W[0,t]) \geq \frac{\Delta\ell}{9D^2\xi}\right)
	\leq 90D^3\xi\cdot \bub_W(G)
	\sum_{t=0}^{2\bub_W(G)\ell} \frac{t}{\ell}
	\leq 360D^3\bub_W(G)^3 \xi\ell.\qedhere
	\end{equation*}
\end{proof}

\subsection{Aldous--Broder and random interlacements}\label{sec:AB:RI}
The random interlacement process was first introduced by Sznitman~\cite{Szn10}. Furthermore, many of the ideas in this section are based on~\cite{Hut18}.
Given a finite connected graph $G$ and a nonempty subset $W\subseteq V$ we define the \defn{$W$-wired interlacement process} $\I_W$ as follows.
Let $\cW_W$ be the set of walks $(u_i)_{i=0}^\ell$ satisfying
\begin{itemize}
	\item $\ell\ge 1;$
	\item $u_0,u_\ell\in W;$
	\item $u_i\notin W$ for $0<i<\ell$.
\end{itemize}
We call these walks \defn{$W$-trajectories} and think of them as directed walks.
We equip $\cW_W$ with the following probability measure:
\begin{equation*}
  \mu_W\left((u_i)_{i=0}^\ell\right)
  = \frac{1}{\Vol(W)} \cdot \prod_{i=1}^{\ell-1}\frac{1}{\d(u_i)},
\end{equation*}
where we recall that $\Vol(W)=\sum_{v\in W}\d(v)$.
The $W$-wired interlacement process $\I_W$ is the Poisson point process on $\cW_W\times\RR$ with
intensity measure $\mu_W\otimes\mathcal{L}$, where $\mathcal{L}$ is the Lebesgue 
measure on $\RR$.  We think of $\RR$ as the timeline.  We write
$\I_W[a,b]$ for the intersection of $\I_W$ with $\cW_W\times[a,b]$ and $\I_W(t)$ for the intersection of $\I_W$ with $\cW_W\times[t,\infty)$.

For a vertex $v\in V$ let $\sigma_t(v;W)$ be the first time greater or 
equal to $t$ at which there is a directed edge emanating from $v$ in a $W$-trajectory of $\I_W(t)$. We observe that for $v\in V\sm W$ this is exactly the first time greater or equal to $t$ at which $v$ is contained in a $W$-trajectory of $\I_W(t)$ and for a vertex $v\in W$ this is the first time greater or equal to $t$ at which there is a $W$-trajectory starting from $v$. 
Since such a first trajectory is almost surely unique, we may define $e_t(v;W)$, for every $v\in V\sm W$, to be the first directed edge of that trajectory that enters $v$.
For every directed edge $e = (u,v)$, we write $-e$ for the reversed edge, that is, $-e = (v,u)$.  The \defn{$W$-Aldous--Broder forest at time $t$} is
\begin{equation*}
\AB_W(\I_W(t)) = \{-e_t(v;W):v\in V\sm W\}.
\end{equation*}
For brevity, we write $\AB_W(t):=\AB_W(\I_W(t))$.
The concatenation of the $W$-trajectories starting from time $t$ has the distribution of the lazy random walk on $G/W$ starting from the contracted vertex of $W$. Hence, the correctness of Aldous--Broder algorithm (see in \cref{sec:ab}) implies that $\AB_W(t)$ has the distribution of $\cT_W$. 
Note that if $T\subseteq G[W]$ is a tree that spans $W$, then the union of that tree and $\AB_W(t)$ is a spanning tree of $G$ (when forgetting edge orientations).
The spatial Markov property of uniform spanning trees (\cref{cl:spatial}) implies that the union of $\AB_W(t)$ and $T$ is in fact the $\UST$ of $G$ conditioned to have $T$ as a subgraph.

Recall that for an oriented rooted forest $T$ and a vertex $v$ we denote by $\past^T(v)$ the \defn{past} of $v$ in $T$ and by $\fut^T(v)$ the \defn{future} of $v$ in $T$. 
For $t\in\RR$ and $v\in V$ set $\past^W_t(v)=\past^{\AB_W(t)}(v)$ and similarly $\fut^W_t(v)=\fut^{\AB_W(t)}(v)$, and note that it follows that $\past^W_t(v)$ and and $\fut^W_t(v)$ have the same law as $\past^{\cT_W}(v)$ and $\fut^{\cT_W}(v)$, respectively.

For every $(\gamma, t) \in \cW_W \times \RR$ and $x\in\RR$ we set $\Phi_x(\gamma,t) = (\gamma,t+x)$. We extend $\Phi_x$ to subsets of $H \subseteq \cW_W \times \RR$ by
\begin{equation*}
  \Phi_x(H) = \left\{(\gamma,t+x) \midd (\gamma,t)\in H \right\}. 
\end{equation*}
Since the measure $\mathcal{L}$ is invariant with respect to the operation $t \to t+x$ for every $x\in\RR$, we obtain the following.
\begin{observation} \label{obs:time:shift}
	Let $W$ be a nonempty vertex set.  For every $t,x \in \RR$, the law of $\Phi_x(\I_W(t))$ is equal to the law of $\I_W(t+x)$.
\end{observation}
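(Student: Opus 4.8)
The plan is to deduce the claim from two standard properties of Poisson point processes: the restriction property and the mapping theorem. First I would observe that, since $\I_W$ is a Poisson point process on $\cW_W\times\RR$ with intensity $\mu_W\otimes\mathcal{L}$ and $\I_W(t)$ is its restriction to the measurable set $\cW_W\times[t,\infty)$, the process $\I_W(t)$ is itself a Poisson point process, with intensity measure $\mu_W\otimes(\mathcal{L}|_{[t,\infty)})$, where $\mathcal{L}|_{[t,\infty)}$ denotes Lebesgue measure restricted to $[t,\infty)$. The same reasoning applied at time $t+x$ shows that $\I_W(t+x)$ is a Poisson point process with intensity $\mu_W\otimes(\mathcal{L}|_{[t+x,\infty)})$.

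Then I would apply the mapping theorem for Poisson processes to the measurable map $\Phi_x\colon\cW_W\times\RR\to\cW_W\times\RR$ given by $(\gamma,s)\mapsto(\gamma,s+x)$: the image $\Phi_x(\I_W(t))$ is again a Poisson point process, and its intensity is the pushforward $(\Phi_x)_*\bigl(\mu_W\otimes(\mathcal{L}|_{[t,\infty)})\bigr)$. Since $\Phi_x$ acts as the identity on the $\cW_W$-coordinate and as translation by $x$ on the $\RR$-coordinate, this pushforward respects the product structure and equals $\mu_W\otimes\bigl(T_x{}_*(\mathcal{L}|_{[t,\infty)})\bigr)$, where $T_x$ is translation by $x$ on $\RR$. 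By translation invariance of Lebesgue measure, $T_x{}_*(\mathcal{L}|_{[t,\infty)})=\mathcal{L}|_{[t+x,\infty)}$, so the intensity of $\Phi_x(\I_W(t))$ is $\mu_W\otimes(\mathcal{L}|_{[t+x,\infty)})$.

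Finally I would note that this is exactly the intensity measure of $\I_W(t+x)$ computed above, and since the law of a Poisson point process is determined by its intensity measure, $\Phi_x(\I_W(t))$ and $\I_W(t+x)$ have the same law. There is no real obstacle here; the only point requiring a moment's care is that $\Phi_x$ leaves the trajectory coordinate genuinely untouched, so that the product form $\mu_W\otimes(\text{Lebesgue})$ of the intensity is preserved under the pushforward and only the Lebesgue factor is shifted — which is precisely where the translation invariance of $\mathcal{L}$ enters.
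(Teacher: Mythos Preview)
Your argument is correct and is essentially the same as the paper's, which simply notes that the observation follows from the translation invariance of $\mathcal{L}$. You have just made explicit the standard restriction and mapping properties of Poisson point processes that underlie this one-line justification.
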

Denote
\begin{equation*}
  I_W[a,b] = \left\{v\in V \midd \sigma_a(v;W) \leq b\right\}.
\end{equation*}
That is, $I_W[a,b]$ is the set of vertices which have a directed edge emanating from them in a $W$-trajectory of $\I_W[a,b]$.

\begin{claim}\label{cl:chron}
  Let $W$ be a nonempty vertex set.
	For every $u,v\in V$ and $t\in\RR$, if $v\in\past^{W}_t(u)$ then
	$\sigma_t(v;W)\ge \sigma_t(u;W)$.
\end{claim}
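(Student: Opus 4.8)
The plan is to reduce the claim to the case of a single directed edge and then iterate along the directed path. Given $v\in\past^W_t(u)$, if $v=u$ there is nothing to prove; otherwise there is a directed path $v=v_0\to v_1\to\cdots\to v_k=u$ in $\AB_W(t)$ with $k\ge 1$. First I would observe that every edge of $\AB_W(t)$ has the form $-e_t(w;W)$ for some $w\in V\sm W$, and that its tail is precisely $w$ (because $e_t(w;W)$ is, by definition, an edge that enters $w$); hence $v_0,\dots,v_{k-1}\in V\sm W$. Consequently it will be enough to prove $\sigma_t(v_{i+1};W)\le\sigma_t(v_i;W)$ for each $0\le i<k$, after which $\sigma_t(u;W)=\sigma_t(v_k;W)\le\cdots\le\sigma_t(v_0;W)=\sigma_t(v;W)$ follows immediately.

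To establish the one-edge statement, fix $i$ and set $a=v_i\in V\sm W$ and $b=v_{i+1}$. The edge $(a,b)$ of $\AB_W(t)$ equals $-e_t(a;W)$, which unpacks to $e_t(a;W)=(b,a)$. Now $e_t(a;W)$ is, by construction, an edge of the $W$-trajectory realizing $\sigma_t(a;W)$, i.e.\ the almost surely unique trajectory of $\I_W(t)$ whose time coordinate equals $\sigma_t(a;W)$ (the first time $\ge t$ at which $a$ is contained in a $W$-trajectory of $\I_W(t)$). So that trajectory contains the directed edge $(b,a)$, and this edge emanates from $b$. Since the trajectory belongs to $\I_W(t)$ and sits at time $\sigma_t(a;W)$, the defining property of $\sigma_t(b;W)$ --- the first time $\ge t$ at which some $W$-trajectory of $\I_W(t)$ has a directed edge emanating from $b$ --- forces $\sigma_t(b;W)\le\sigma_t(a;W)$, which is exactly what is needed.

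The only point I expect to require care is the boundary case $b\in W$, which occurs precisely when $b$ is the initial vertex $u_0$ of that trajectory: there one must invoke the form of the definition valid for vertices of $W$, namely that $\sigma_t(b;W)$ is the first time $\ge t$ at which a $W$-trajectory of $\I_W(t)$ starts at $b$, and note that an edge emanating from $b\in W$ inside a $W$-trajectory can only be its very first edge, since interior vertices of $W$-trajectories avoid $W$; with this the same inequality goes through. Beyond this small case distinction I do not anticipate any real obstacle: the argument is essentially a transcription of the definitions, resting on the single structural fact that the discovery edge of a vertex $a$ witnesses a trajectory that has already touched (or started at) $a$'s tree-parent $b$ by time $\sigma_t(a;W)$.
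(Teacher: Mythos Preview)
Your proposal is correct and follows essentially the same approach as the paper: reduce to a single directed edge $(w_0,w_1)$ of $\AB_W(t)$, observe that the defining trajectory for $\sigma_t(w_0;W)$ contains the edge $(w_1,w_0)$ emanating from $w_1$, hence $\sigma_t(w_1;W)\le\sigma_t(w_0;W)$, and then iterate along the directed path from $v$ to $u$. Your treatment is in fact more careful than the paper's, which does not spell out the boundary case $b\in W$.
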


\begin{proof}
  First note that if $e=(w_0,w_1)$ is a directed edge in $\AB_W(t)$ then by definition
  $\sigma_t(w_0;W)\ge\sigma_t(w_1;W)$, since the first trajectory edge entering $w_0$ is $(w_1,w_0)$.
  Extending this argument to simple paths, if $(w_0,\ldots,w_k)$ is a directed path in $\AB_W(t)$ then for $0\le i<k$ we have that $\sigma_t(w_i;W)\ge\sigma_t(w_{i+1};W)$.
  The claim follows since if $v\in\past^W_t(u)$ then there exists a directed path from $v$ to $u$ in $\AB_W(t)$.
\end{proof}

\begin{claim}\label{cl:pasts}	
	Let $W$ be a nonempty vertex set.
  For every $u\in V$ and $a\le b$, if $u\notin I_W[a,b]$ then $\past^{W}_a(u) \subseteq \past^{W}_b(u)$.
\end{claim}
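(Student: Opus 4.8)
The plan is to prove the stronger statement that every directed path in $\AB_W(a)$ that ends at $u$ is also a directed path in $\AB_W(b)$; taking unions over such paths immediately yields $\past^W_a(u)\subseteq\past^W_b(u)$. So I would fix $v\in\past^W_a(u)$, assume $v\ne u$, and let $v=x_0\to x_1\to\cdots\to x_k=u$ be the directed path from $v$ to $u$ in $\AB_W(a)$. For each $0\le i<k$ the edge $(x_i,x_{i+1})$ lies in $\AB_W(a)=\{-e_a(w;W):w\in V\sm W\}$ and emanates from $x_i$, which forces $x_i\in V\sm W$ and $e_a(x_i;W)=(x_{i+1},x_i)$.

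The first step is to record the monotonicity of the visiting times along this path, exactly as in the proof of \cref{cl:chron}: the first $W$-trajectory of $\I_W$ with timestamp $\ge a$ that enters $x_i$ does so through the edge $(x_{i+1},x_i)$, hence already contains $x_{i+1}$, and therefore $\sigma_a(x_{i+1};W)\le\sigma_a(x_i;W)$. Iterating gives $\sigma_a(x_0;W)\ge\cdots\ge\sigma_a(x_{k-1};W)\ge\sigma_a(u;W)$, where the last inequality holds whether or not $u\in W$ (when $u\in W$ the trajectory realizing $\sigma_a(x_{k-1};W)$ must start at $u$, since a vertex of $W$ occurs in a $W$-trajectory only as an endpoint). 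Since $u\notin I_W[a,b]$ means precisely $\sigma_a(u;W)>b$, we conclude $\sigma_a(x_i;W)>b$ for every $i<k$.

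The second step uses this to show that the out-edge of each $x_i$ with $i<k$ is unchanged in $\AB_W(b)$. Because $\sigma_a(x_i;W)>b$, no $W$-trajectory of $\I_W$ with timestamp in $[a,b]$ passes through $x_i$; hence the first $W$-trajectory entering $x_i$ among those with timestamp $\ge b$ is literally the same trajectory as among those with timestamp $\ge a$, namely the one at time $\sigma_a(x_i;W)>b$. Consequently $e_b(x_i;W)=e_a(x_i;W)=(x_{i+1},x_i)$, so $(x_i,x_{i+1})\in\AB_W(b)$. Since this holds for all $i<k$, the path $x_0\to\cdots\to x_k$ is present in $\AB_W(b)$, i.e.\ $v\in\past^W_b(u)$, which finishes the proof.

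The only genuinely delicate point is the bookkeeping at the endpoint $u$ in the case $u\in W$: there $u$ is a root of its tree and carries no out-edge, so the monotonicity chain has to be closed off by the observation that a vertex of $W$ appears in a $W$-trajectory only at its two ends. Once that and the definitions of $\I_W$, $\sigma$, $e_\cdot(\cdot;W)$ and $\AB_W$ are unwound carefully, the rest is immediate and requires no estimates.
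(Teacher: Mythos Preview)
Your proof is correct and follows essentially the same route as the paper's: fix a vertex $v$ in $\past^W_a(u)$, take the directed path from $v$ to $u$ in $\AB_W(a)$, use the monotonicity $\sigma_a(x_0;W)\ge\cdots\ge\sigma_a(u;W)>b$ along the path (the content of \cref{cl:chron}), and conclude that each $e_a(x_i;W)=e_b(x_i;W)$ so the same path lies in $\AB_W(b)$. Your inline rederivation of the monotonicity chain and your explicit handling of the endpoint case $u\in W$ are slightly more detailed than the paper, but the argument is the same.
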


\begin{proof}
	Let $v\in \past^{W}_a(u)$.
  We show that $v \in \past^{W}_b(u)$ and that the unique directed path from $v$ to $u$ in $\past^{W}_a(u)$ is also in $\past^{W}_b(u)$.  Since $u \notin I_W[a,b]$, we have that $\sigma_a(u;W)\ge b$. Thus, by \cref{cl:chron} the directed path $\gamma = (v=w_0,w_1,\ldots,w_k = u)$ in $\AB_W(a)$ has $\sigma_a(w_i;W) \geq b$ for every $0\leq i \leq k$. Therefore, for every $0 \leq i \leq k$ we have $\sigma_a(w_i;W) = \sigma_b(w_i;W)$. Thus, the first edge emanating from $w_i$ after time $a$ equals to the first edge emanating from $w_i$ after time $b$. Therefore, every edge in this path is also in $\past^{W}_b(u)$. Hence, $\gamma \subseteq \past^{W}_b(u)$, as required.
\end{proof}

The following stochastic domination lemma, which appears in a slightly different setting in~\cite{Hut18+}, will be useful. It essentially says that the past of any vertex $w$ can only grow when we wire this vertex to $W$.

\begin{lemma}\label{lem:stoc:dom}
  For any increasing event $A$ and every three vertices $u,v,w \in V$ we have that 
  \begin{equation*}
    \pr\left(\past^{\cT_{u,v}}(w) \in A \midd \fut^{\cT_{u,v}}(w)\right)
      \le \pr\left(\past^{\cT_{u,w}}(w) \in A\right).
  \end{equation*}
\end{lemma}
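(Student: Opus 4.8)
The plan is to condition on the future of $w$ in $\cT_{u,v}$ and then recognise the conditional law of its past via the spatial Markov property. Fix the increasing event, call it $\mathcal{A}$, and assume $u,v,w$ are distinct (if $w\in\{u,v\}$ the statement is immediate: either the two trees coincide, or $\past^{\cT_{u,w}}(w)=V$ and the right-hand side is $\mathbf{1}\{\mathcal{A}\neq\es\}$, which dominates). Condition on the value $\fut^{\cT_{u,v}}(w)=\gamma$; necessarily $\gamma$ is a simple path $(w=\gamma_0,\gamma_1,\dots,\gamma_k)$ with $\gamma_k\in\{u,v\}$ and $\gamma_0,\dots,\gamma_{k-1}\notin\{u,v\}$. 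The key preliminary observation is that the event $\{\fut^{\cT_{u,v}}(w)=\gamma\}$ is exactly the event that all $k$ edges of $\gamma$ belong to $\UST(G/\{u,v\})$: if they do, orienting them towards $\{u,v\}$ displays them as the unique directed path from $w$ to its root, hence as $\fut^{\cT_{u,v}}(w)$.

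Now apply \cref{cl:spatial} iteratively to the $k$ edges of $\gamma$: conditionally on this event, $\UST(G/\{u,v\})$ is distributed as the union of $\gamma$ with $\UST(G/W^*)$, where $W^*:=\{u,v\}\cup\{\gamma_0,\dots,\gamma_{k-1}\}$, since contracting the edges of $\gamma$ merges $\gamma_0,\dots,\gamma_{k-1}$ into the contracted class of $\{u,v\}$. Keeping track of orientations, gluing the forward path $\gamma_0\to\gamma_1\to\cdots\to\gamma_k$ onto $\cT_{W^*}$ does not change the past of $w$: a vertex $x\notin W^*$ lies in $\past^{\cT_{u,v}}(w)$ precisely when the $\cT_{W^*}$–path starting at $x$ first reaches $W^*$ at $w$ itself (if it reaches $W^*$ at some $\gamma_j$ with $j\ge 1$, it then continues $\gamma_j\to\cdots\to\gamma_k$ and never visits $w$), i.e.\ precisely when $x\in\past^{\cT_{W^*}}(w)$. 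Hence, conditionally on $\fut^{\cT_{u,v}}(w)=\gamma$, the law of $\past^{\cT_{u,v}}(w)$ equals the law of $\past^{\cT_{W^*}}(w)$, and note $\{u,w\}\subseteq W^*$ and $w\in W^*$.

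It thus suffices to prove the unconditional domination: whenever $\{u,w\}\subseteq W^*$, the set $\past^{\cT_{W^*}}(w)$ is stochastically dominated by $\past^{\cT_{u,w}}(w)$; combining this with the previous paragraph and averaging over $\gamma$ then gives the lemma. This domination is immediate from \cref{lem:neg:ass} applied to $\{u,w\}\subseteq W^*$: it provides a coupling under which $E(\UST(G/W^*))\subseteq E(\UST(G/\{u,w\}))$, that is $E(\cT_{W^*})\subseteq E(\cT_{u,w})$ as forests on $V$. Since the vertices of the wired set are exactly the roots of the corresponding oriented forest, $w$ is a root of both $\cT_{W^*}$ and $\cT_{u,w}$, so in each forest its past coincides with the connected component containing it; and the connected components of a sub-forest refine those of the ambient forest, whence $\past^{\cT_{W^*}}(w)\subseteq\past^{\cT_{u,w}}(w)$ pointwise under this coupling. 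As $\mathcal{A}$ is increasing, this gives $\pr(\past^{\cT_{W^*}}(w)\in\mathcal{A})\le\pr(\past^{\cT_{u,w}}(w)\in\mathcal{A})$, as required.

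The only step that requires genuine care is the identification in the second paragraph: one must verify both that conditioning on $\fut^{\cT_{u,v}}(w)$ is the same as conditioning on a set of present edges (so that \cref{cl:spatial} is applicable) and that, after the contraction, $\past^{\cT_{u,v}}(w)$ is literally the past of $w$ in the smaller network $G/W^*$; once this bookkeeping is in place, the remaining ingredients are soft. One could alternatively carry out this step inside the interlacement picture of \cref{sec:AB:RI} (using \cref{cl:pasts} and time–shift invariance to control how the past of $w$ grows), but the contraction argument through \cref{cl:spatial} together with \cref{lem:neg:ass} is the most economical route.
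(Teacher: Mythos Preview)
Your proof is correct and follows essentially the same route as the paper's: condition on $\fut^{\cT_{u,v}}(w)=\gamma$, use the spatial Markov property (\cref{cl:spatial}) to identify the conditional law of the past with that of $\past^{\cT_{W^*}}(w)$ for $W^*=\{u,v\}\cup\gamma$, and then invoke Feder--Mihail (\cref{lem:neg:ass}) to dominate by $\past^{\cT_{u,w}}(w)$. The only cosmetic difference is that the paper passes through the intermediate $\cT_{u,v,w}$ (applying \cref{lem:neg:ass} twice, first from $W^*$ to $\{u,v,w\}$, then to $\{u,w\}$), whereas you go directly from $W^*$ to $\{u,w\}$ in one step; your version also spells out more carefully why conditioning on the future is an edge-presence event and why the past is unchanged under the contraction, points the paper leaves implicit.
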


\begin{proof}
	Let $\varphi$ be a simple path in $G$ from $w$ to $\{u,v\}$.
  Applying \cref{cl:spatial,lem:neg:ass} we have that $\past^{\cT_{u,v}}(w)$ conditioned on $\fut^{\cT_{u,v}}(w)=\varphi$ is stochastically dominated by $\past^{\cT_{u,v,w}}(w)$.
  By applying \cref{lem:neg:ass} again we have that $\past^{\cT_{u,v,w}}(w)$ is stochastically dominated by $\past^{\cT_{u,w}}(w)$.
\end{proof}

\subsection{The height of the past} \label{sec:recursiveinequality}
For an oriented rooted forest $T$ denote by $\height(T)$ its \defn{height}, namely, the length of the longest directed simple path in $T$.
Let $W$ be a nonempty vertex set and let $u$ be a vertex.
Recall that $\past^W(u)$ denotes the past of $u$ in $\cT_W$, that is, the oriented subtree of $\cT_W$ in which all paths are directed towards $u$.
For a vertex set $W$ and a vertex $u$, denote $W_u=W\cup\{u\}$.
For an integer $\ell\ge 1$ define
\begin{equation*}
  \cQ_W(\ell) = \max_{u\in V}
    \pr\left(\height\left(\past^{W_u}(u)\right)\ge\ell\right).
\end{equation*}

\begin{claim}\label{cl:q:ind}
  Let $W$ be a nonempty vertex set.
  Then, for every integer $\ell\ge 1$,
  \begin{equation*}
    \cQ_W(\ell) \le \frac{C_2}{\ell} + \frac{\cQ_W(\ell/3)}{4},
  \end{equation*}
  for $C_2 = 17280\cdot D^4\bub_W(G)^3\log(192D\bub_W(G))$.
\end{claim}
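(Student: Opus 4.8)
The plan is to carry out a single recursive step at scale $\ell$, in the spirit of~\cite{KN09} and~\cite{Hut18+}. Fix a vertex $u$ and write $W_u=W\cup\{u\}$; since $\AB_{W_u}(0)$ has the law of $\cT_{W_u}$, it suffices to bound $\pr\bigl(\height(\past^{W_u}_0(u))\ge\ell\bigr)$ uniformly in $u$, and we may assume $\ell$ is large (the claim is trivial for bounded $\ell$, as $\cQ_W(\cdot)\le 1$). On the event $\{\height(\past^{W_u}_0(u))\ge\ell\}$ there is a directed path $v_0\to v_1\to\cdots\to v_m=u$ in $\AB_{W_u}(0)$ with $m\ge\ell$, and choosing $j$ with $m-j\in[\ell/3,2\ell/3]$ the vertex $w=v_j$ satisfies $\dist_{\cT_{W_u}}(w,u)\in[\ell/3,2\ell/3]$ and $\height(\past^{W_u}_0(w))\ge j\ge\ell/3$. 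Fix the threshold $\xi=1/(4C_1\ell)$ with $C_1=360D^3\bub_W(G)^3$, and call a vertex $v$ \emph{light} if $\dist_{\cT_{W_u}}(v,u)\in[\ell/3,2\ell/3]$ and $\ceff(W_u\lr\hat\fut^{W_u}(v))\le\xi\ell\Delta(G)$, and \emph{heavy} if the distance condition holds but $\ceff(W_u\lr\hat\fut^{W_u}(v))>\xi\ell\Delta(G)$. Then $\{\height(\past^{W_u}_0(u))\ge\ell\}$ is contained in
\begin{equation*}
  \bigl\{\exists\text{ light }v:\ \height(\past^{W_u}_0(v))\ge\ell/3\bigr\}
  \cup\bigl\{\exists\text{ heavy }v:\ \height(\past^{W_u}_0(v))\ge\ell/3\bigr\},
\end{equation*}
and I would bound these two events separately.

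For the light event I would use the union bound together with \cref{lem:stoc:dom}. The event ``$v$ is light'' depends only on $\fut^{W_u}_0(v)$ — it records where, and after how many steps, the future of $v$ meets $W_u$, together with an effective conductance that is a function of that future path — so conditioning on $\fut^{W_u}_0(v)$ and applying \cref{lem:stoc:dom} to the increasing event $\{\height(\cdot)\ge\ell/3\}$ gives $\pr\bigl(\height(\past^{W_u}_0(v))\ge\ell/3\mid\fut^{W_u}_0(v)\bigr)\le\cQ_W(\ell/3)$. Summing over $v$ and using \cref{lem:bad:bd} applied to the vertex set $W_u$ (whose constant $360D^3\bub_{W_u}(G)^3$ is at most $C_1$, since $\bub_{W_u}(G)\le\bub_W(G)$) to bound $\E\bigl[\#\{\text{light }v\}\bigr]\le C_1\xi\ell=\tfrac14$, one gets $\pr\bigl(\exists\text{ light }v:\ \height(\past^{W_u}_0(v))\ge\ell/3\bigr)\le\tfrac14\cQ_W(\ell/3)$, which is the second term in the claim.

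The heavy event is the crux, and I expect it to be the main obstacle. The expected number of heavy vertices is of order $\ell$, so repeating the union bound would produce a useless factor of $\ell$ in front of $\cQ_W(\ell/3)$; instead one must exploit that the future path of a heavy vertex is well connected to $W_u$. Here the plan is to pass to the random interlacement representation of $\cT_{W_u}$ from \cref{sec:AB:RI} and to use the time-stationarity of \cref{obs:time:shift} together with the monotonicity-in-time statements \cref{cl:chron,cl:pasts}: a large value of $\ceff(W_u\lr\hat\fut^{W_u}_0(v))$ forces, over a suitable time interval, enough fresh $W_u$-trajectories to meet this future path that a macroscopic part of the subtree at $v$ is re-attached to $W_u$ at a depth well below $\ell/3$, contradicting $\dist_{\cT_{W_u}}(v,u)\ge\ell/3$. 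The number of trajectories needed, and hence the error incurred, is controlled by \cref{lem:ceff:bd} and \cref{lem:m:exp} (which bound the effective conductances that arise along the trajectories), and summing the resulting geometric series of errors yields the logarithmic factor in $C_2$ and the bound $\pr\bigl(\exists\text{ heavy }v:\ \height(\past^{W_u}_0(v))\ge\ell/3\bigr)\le C_2/\ell$. Combining the two estimates gives the claimed recursion; making the re-attachment heuristic quantitative with the advertised constants is the delicate part, whereas the light case is essentially a direct combination of \cref{lem:bad:bd} and \cref{lem:stoc:dom}.
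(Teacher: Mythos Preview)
Your light case is fine, and the use of \cref{lem:stoc:dom} together with \cref{lem:bad:bd} there matches the paper. The gap is in the heavy case, and it is not just a matter of making the heuristic quantitative.

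The allocation you propose --- light gives $\tfrac14\cQ_W(\ell/3)$, heavy gives $C_2/\ell$ --- forces $\xi\asymp 1/\ell$, so that ``heavy'' means $\ceff(W_u\lr\hat\fut^{W_u}(v))>\Delta/(4C_1)$, a \emph{constant} threshold. But the interlacement argument needs $\zeta\cdot\ceff/\Vol(W_u)$ bounded below to make the avoidance probability small; with a constant conductance threshold this requires $\zeta\succeq|W|$, and then $\pr(\sigma_0(u;W_u)\le\zeta)\succeq 1$ rather than $O(1/\ell)$. There is no choice of $\zeta$ that makes both ends work with your $\xi$. Your re-attachment picture is also off: trajectories that hit $\hat\fut^{W_u}_0(v)$ at later times change the tree at those later times, not at time $0$, so they cannot ``contradict'' $\dist_{\cT_{W_u}}(v,u)\ge\ell/3$ at time $0$.

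In the paper the two terms arise differently. The $C_2/\ell$ term does not come from heavy vertices at all; it is $\pr(\sigma_0(u)\le\zeta)$ with $\zeta=K|W|/\ell$. The entire remaining event is handled by a Markov step: on $\{\height(\past^{W_u}_0(u))\ge\ell,\ \sigma_0(u)>\zeta\}$ there are at least $\ell/3$ vertices $v$ with $\sC^\ell_0(v,u)$, so Markov gives a factor $3/\ell$ in front of $\sum_v\pr(\sC^\ell_0(v,u),\,\sigma_0(u)>\zeta)$. The time shift (\cref{obs:time:shift}, \cref{cl:pasts}) replaces $\{\sigma_0(u)>\zeta\}$ by the event that independent trajectories in $[-\zeta,0]$ miss $\phi_0(v,u)$; conditional independence then factors out $\cQ_W(\ell/3)$ via \cref{lem:stoc:dom} for \emph{every} $v$, light or heavy. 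With the $3/\ell$ already extracted one can take $\xi$ constant, so the heavy threshold is $\asymp\ell\Delta$, the avoidance probability is $\exp(-K\xi/2)$, and both light (\cref{lem:bad:bd}) and heavy (\cref{cl:hang:ball}) contributions combine to at most $\tfrac14\cQ_W(\ell/3)$. The Markov step is what you are missing; without it the light/heavy split cannot be balanced as you describe.
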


\begin{proof}
  Fix an integer $\ell\ge 1$.
	Throughout the proof, we will write $\sigma_t(u)$ instead of $\sigma_t(u;W)$.
	Let $u\in V$ (possibly in $W$) and consider the $W_u$-wired random interlacement process.  
  For $v\in V$ and $t\in\RR$ let $\sB^\ell_t(v,u)$ be the event that
	$v \in \past_t^{W_u}(u)$ and that the length of the unique directed simple path from $v$ to $u$ in $\past^{W_u}_t(u)$ is in $[\ell/3,2\ell/3]$.
	Let $\sC^\ell_t(v,u)$ be the event 
	that $\sB^\ell_t(v,u)$ occurs and that there is a simple path of length $\ell$ ending at $u$ and passing through $v$ in $\past_t^{W_u}(u)$.
	Fix $\zeta>0$ (to be chosen later), let $Z$ count the number of vertices $v\in V$ for which $\sC^\ell_0(v,u)$ occurs and let $Z_\zeta = Z\cdot\ind_{\sigma_0(u)>\zeta}$.  By 
	Markov's inequality,
  \begin{equation}\label{eq:qind:main}
  \begin{aligned}
    \pr\left(\height\left(\past^{W_u}(u)\right)\ge\ell\right)
    &\le \pr(\sigma_0(u) \leq \zeta) + \pr(Z_\zeta\ge \ell/3) \\
    &\le \pr(\sigma_0(u) \leq \zeta)
       + \frac{3}{\ell} \cdot \sum_{v\in V}
         \pr\left(\sC^\ell_0(v,u) \pand \{\sigma_0(u)>\zeta\}\right).
	\end{aligned}  
  \end{equation}
	It is very simple to control $\pr(\sigma_0(u) \leq \zeta)$.
  Write $\delta=\delta(G)$ and $\Delta=\Delta(G)$.
  Note that the number of $W_u$-trajectories that emanate from $u$ in a time interval of length $\zeta$ has a Poisson distribution with mean $\zeta\cdot \d(u)/\Vol(W_u)$.
  Hence, by Markov's inequality
  \begin{equation}\label{eq:qind:sigma}
    \pr(\sigma_0(u)\leq \zeta) \le \frac{\zeta \d(u)}{\Vol(W_u)}
    \le \frac{\zeta\Delta}{\delta|W_u|} \le\frac{\zeta D}{|W|}.
  \end{equation}
	As for the second term in \eqref{eq:qind:main}, denote $\sF^\ell(v,u) = \left\{\sC^\ell_0(v,u) \pand \sigma_0(u)>\zeta\right\}$.
  For $t\in\RR$, let $\phi_t(v,u)$ be the unique directed path from $v$ to $u$ in $\past^{W_u}_t(u)$, if such exists.
	Note that if $\phi_t(v,u) = (v=v_0,v_1,\ldots,v_k=u)$, by \cref{cl:chron} for every $1\leq i \leq k$ we have that $\sigma_t(v_{i-1})\ge\sigma_t(v_i)$.  
  Therefore, $\sigma_0(u)> \zeta$ implies $\phi_0(v,u)\cap I_{W_u}[0,\zeta]=\es$, and 
	clearly the reverse implication also holds.
  Hence,
  \begin{equation}\label{eq:qind:e:c:1}
    \pr\left(\sF^\ell(v,u)\right) 
    = \pr\left(\sC^\ell_0(v,u)
               \pand \phi_0(v,u)\cap I_{W_u}[0,\zeta] = \es\right).
  \end{equation}
	We now claim that the following inequality holds.
  \begin{equation*}\label{eq:qind:c}
  \begin{aligned}
    &\pr\left(\sC^\ell_0(v,u)
             \pand \phi_0(v,u)\cap I_{W_u}[0,\zeta] = \es\right)
    \le \pr\left(\sC^\ell_\zeta(v,u)
                 \pand \phi_\zeta(v,u)\cap I_{W_u}[0,\zeta] = \es\right).
  \end{aligned}
  \end{equation*}
	Indeed, assume the event $\{\sC^\ell_0(v,u) \pand \phi_0(v,u)\cap I_{W_u}[0,\zeta] = \es\}$ holds. Then,  in $\AB_{W_u}(0)$
	there exists a directed path of length $\ell$ passing through $v$ and ending in $u$.
	Let $w$ be the first vertex in this path.
  Since $\sigma_0(u)>\zeta$, we have by \cref{cl:pasts} that $\past^{W_u}_0(u)\subseteq\past^{W_u}_\zeta(u)$ and $\phi_0(w,u)=\phi_\zeta(w,u)$.
  Thus $\{\sC^\ell_\zeta(v,u) \pand \phi_\zeta(v,u)\cap I_{W_u}[0,\zeta] = \es \}$ holds and the inequality follows.
  Therefore by \eqref{eq:qind:e:c:1} and \cref{obs:time:shift} we obtain
  \begin{equation*}\label{eq:qind:e:c:2}
    \pr\left(\sF^\ell(v,u)\right)
      \leq \pr\left(\sC^\ell_0(v,u)
                    \pand \phi_0(v,u)\cap I_{W_u}[-\zeta,0] = \es\right).
	\end{equation*}
  The events $\sC^\ell_0(v,u)$ and $\{ \phi_0(v,u)\cap I_{W_u}[-\zeta,0] = \es \}$ are conditionally independent conditional on the event $\sB^\ell_0(v,u)$ and the random path $\phi_0(v,u)$, since the first event depends only on trajectories emanating at positive times, while the second depends only on trajectories emanating at negative times.
  Thus, we have that almost surely
	\begin{equation}\label{eq:q:ind:cond:ind}
  \begin{aligned}
    &\pr\left(\sC^\ell_0(v,u)
	            \pand \phi_0(v,u)\cap I_{W_u}[-\zeta,0] = \es 
               \midd \sB^\ell_0(v,u),\ \phi_0(v,u)\right)\\
    &=\pr\left(\sC^\ell_0(v,u)\midd \sB^\ell_0(v,u),\ \phi_0(v,u)\right)
      \cdot \pr\left(\phi_0(v,u)\cap I_{W_u}[-\zeta,0] = \es
                     \midd \sB^\ell_0(v,u),\ \phi_0(v,u)\right).
  \end{aligned}
  \end{equation}
	We will now bound the first of the two terms of the right hand side of~\eqref{eq:q:ind:cond:ind}. For the event $\sC^\ell_0(v,u)$ to hold,
  the past of $v$ in $\past^{W_u}_0(u)$ must be of height at least $\ell/3$,
  thus
	\begin{equation*}
	  \pr\left(\sC^\ell_0(v,u)\midd \sB^\ell_0(v,u),\ \phi_0(v,u)\right)
    \le \pr\left(\height\left(\past^{W_u}_0(v)\right)\ge\ell/3
                 \midd \sB^\ell_0(v,u),\ \phi_0(v,u)\right).
	\end{equation*}
  Note that on the event $\sB^\ell_0(v,u)$ we have $\phi_0(v,u)=\fut^{W_u}_0(v)$.
  Thus, by \cref{lem:stoc:dom},
  \begin{equation*}
    \pr\left(\height\left(\past^{W_u}_0(v)\right)\ge\ell/3
             \midd \sB^\ell_0(v,u),\ \phi_0(v,u)\right)
    \le\pr\left(\height\left(\past^{W_v}(v)\right)\ge\ell/3 \right) \le \cQ_W(\ell/3).
  \end{equation*}
	Therefore, averaging over $\phi_0(v,u)$,
	\begin{equation}\label{eq:qind:e:q}
    \pr\left(\sF^\ell(v,u)\right)
      \le \cQ_W(\ell/3) \cdot
        \pr\left(\phi_0(v,u)\cap I_{W_u}[-\zeta,0] = \es
                 \midd \sB^\ell_0(v,u)\right)
        \cdot \pr\left(\sB^\ell_0(v,u)\right).
	\end{equation}
	Note that if a path $\phi$ is ``close'' to $W_u$, in the sense that $\ceff(W_u\lr\phi)$ is large, then there is a high probability that a trajectory emanating from $W_u$ will hit $\phi$.
  We would like to distinguish between paths which are ``close'' to $W_u$ from those which are ``far'' from it.
  For $t\in\RR$, let $\hat\phi_t(v,u)$ be the unique directed path from $v$ to $u$ in $\past^{W_u}_t(u)$, if such exists, with $u$ excluded, namely, $\hat\phi_t(v,u)=\phi_t(v,u)\sm\{u\}$.
  Since the number of trajectories emanating from $W_u$
   from time $-\zeta$ to time $0$ which hit $\hat\phi_0(v,u)$ is a Poisson random variable with mean $\zeta\cdot\ceff(W_u\lr\hat\phi_0(v,u))/\Vol(W_u)$,
  \begin{equation*}
    \pr\left(\phi_0(v,u)\cap I_{W_u}[-\zeta,0] = \es
             \pand \ceff(W_u\lr\hat\phi_0(v,u)) \geq \xi\ell\Delta
             \midd \sB^\ell_0(v,u)\right)
    \le \exp\left(-\frac{\zeta\xi\ell\Delta}{\Vol(W_u)}\right),
  \end{equation*}
  for any $\xi>0$.
  On the other hand,
  \begin{align*}
    &\pr\left(\phi_0(v,u)\cap I_{W_u}[-\zeta,0] = \es
              \pand \ceff(W_u \lr \hat\phi_0(v,u)) \leq \xi\ell\Delta
              \midd \sB^\ell_0(v,u)\right)
        \cdot\pr\left(\sB^\ell_0(v,u)\right)\\
  	&\le\pr\left(\sB^\ell_0(v,u)
                 \pand \ceff(W_u \lr \hat\phi_0(v,u)) \leq \xi\ell\Delta\right).
  \end{align*}
  By \cref{lem:bad:bd},
	\begin{equation*}
	  \sum_{v \in V}
      \pr\left(\sB^\ell_0(v,u)
               \pand \ceff(W_u \lr \hat\phi_0(v,u)) \leq \xi\ell\Delta\right)
    \le C_1\xi\ell,
	\end{equation*}
  for $C_1=360D^3\bub_W(G)^3$.
  We then have, using~\eqref{eq:qind:e:q},
  \begin{equation*}\label{eq:qind:sum}
  \begin{aligned}
        &\frac{3}{\ell}\sum_{v \in V}\pr\left(\sF^\ell(v,u)\right)\\
    &\le \frac{3}{\ell}\cQ_W\left(\frac{\ell}{3}\right)\sum_{v \in V}
         \pr\left(\phi_0(v,u)\cap I_{W_u}[-\zeta,0] = \es
                  \midd \sB^\ell_0(v,u)\right)
         \cdot \pr\left(\sB^\ell_0(v,u)\right)\\
    &\le \frac{3}{\ell}\cQ_W\left(\frac{\ell}{3}\right)
         \left(
           C_1\xi\ell
           + \exp\left(-\frac{\zeta\xi\ell\Delta}{\Vol(W_u)}\right)
             \cdot \E\left[\left|\ball_{\cT_{W_u}}(0,\ell)\right|\right]
         \right).
  \end{aligned}
	\end{equation*}
	We then choose $\zeta=K|W|/\ell$ for some $K>0$ that will be chosen later and use \cref{cl:hang:ball} to bound the above by
	\begin{equation}\label{eq:qind:sum:end}
    3\cQ_W\left(\frac{\ell}{3}\right)
      \left(C_1\xi + 8D\bub_W(G)\exp\left(-K\xi/2\right)\right),
	\end{equation}
  where we used $2|W|\Delta\ge\Vol(W_u)$.
	We choose $\xi = (24C_1)^{-1}$ 
  and 
    $K=2\xi^{-1}\log(192D\bub_W(G))$,
  so that~\eqref{eq:qind:sum:end} is upper bounded by $\cQ_W(\ell/3)/4$.
	To conclude, by~\eqref{eq:qind:main}, \eqref{eq:qind:sigma} and \eqref{eq:qind:sum:end} we have
  \begin{equation*}
    \pr\left(\height\left(\past^{W_u}(u)\right)\ge\ell\right)	
    \le \frac{\zeta D}{|W|} + \frac{1}{4}\cQ_W\left(\frac{\ell}{3}\right)
    = \frac{KD}{\ell} + \frac{1}{4}\cQ_W\left(\frac{\ell}{3}\right),
  \end{equation*}
  so the claim holds for $C_2 = KD=17280\cdot D^4\bub_W(G)^3\log(192D\bub_W(G))$.
\end{proof}

\begin{corollary} \label{cor:ub:ind}
  Let $W$ be a nonempty vertex set.
  Then, for every integer $\ell\ge 1$,
  \begin{equation*}
    \cQ_W(\ell) \leq \frac{C_3}{\ell},
  \end{equation*}
  where $C_3 = 4C_2 = 69120\cdot D^4\bub_W(G)^3\log(192D\bub_W(G))$ for $C_2$ from \cref{cl:q:ind}.
\end{corollary}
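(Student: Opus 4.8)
The plan is to run a straightforward strong induction on the integer $\ell$, with the recursive estimate of \cref{cl:q:ind} as the engine. Two elementary facts serve as lubricant: first, $\cQ_W(\ell)\le 1$ for every $\ell$, since it is a maximum of probabilities; second, because heights are integer‑valued, the quantity $\cQ_W(\ell/3)$ occurring in \cref{cl:q:ind} is to be read as $\cQ_W(\lceil\ell/3\rceil)$ (as in the proof of that claim, where $\{\height\ge\ell/3\}=\{\height\ge\lceil\ell/3\rceil\}$), and in particular $\lceil\ell/3\rceil<\ell$ whenever $\ell\ge 2$, so the recursion genuinely reduces the argument. Here $W$ is fixed, hence $C_2$, and therefore $C_3=4C_2$, is a fixed constant.

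Concretely, I would prove $\cQ_W(\ell)\le C_3/\ell$ for all integers $\ell\ge 1$ by strong induction. For the base case, if $\ell\le C_3$ then $C_3/\ell\ge 1\ge\cQ_W(\ell)$; since $C_2\ge 1$ we have $C_3\ge 4$, so this covers in particular the value $\ell=1$ for which the recursion is degenerate. For the inductive step, fix $\ell>C_3$ (hence $\ell\ge 2$) and assume the bound for every smaller positive integer. Then $1\le\lceil\ell/3\rceil<\ell$, so the induction hypothesis applies at $\lceil\ell/3\rceil$, and using $\lceil\ell/3\rceil\ge\ell/3$ it gives $\cQ_W(\lceil\ell/3\rceil)\le C_3/\lceil\ell/3\rceil\le 3C_3/\ell=12C_2/\ell$. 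Substituting into \cref{cl:q:ind},
\[
  \cQ_W(\ell)\ \le\ \frac{C_2}{\ell}+\frac14\cdot\frac{12C_2}{\ell}\ =\ \frac{4C_2}{\ell}\ =\ \frac{C_3}{\ell},
\]
which is exactly the claimed bound and closes the induction.

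There is no real obstacle here; the only point deserving minimal care is that $\cQ_W$ be evaluated at a strictly smaller integer in the inductive step, which is arranged simply by having the base case absorb all $\ell\le C_3$. It is worth noting that the factor $1/4$ multiplying $\cQ_W(\ell/3)$ in \cref{cl:q:ind} is precisely what makes the guess $\cQ_W(\ell)\le 4C_2/\ell$ self‑reproducing: the step amounts to the numerical inequality $C_2+\tfrac14\cdot 12C_2\le 4C_2$, which holds with equality, so any constant at least $4C_2$ would do and the statement simply records the smallest such multiple of $C_2$.
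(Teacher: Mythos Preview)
Your proof is correct and follows the same induction as the paper's own argument; the paper's proof is the one-line computation $\cQ_W(\ell)\le C_2/\ell + \tfrac{3}{4}\cdot C_3/\ell \le C_3/\ell$, and you have simply been more careful about the base case and the integer rounding in $\cQ_W(\ell/3)$.
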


\begin{proof}
  Using \cref{cl:q:ind} and by induction,
  \begin{equation*}
    \cQ_W(\ell)
    \le \frac{C_2}{\ell} + \frac{\cQ_W(\ell/3)}{4}
    \le \frac{C_2}{\ell} + \frac{3}{4}\cdot\frac{C_3}{\ell}
    \le \frac{C_3}{\ell}. \qedhere
  \end{equation*}
\end{proof}

\begin{lemma}\label{lem:mns}
  Let $W$ be a nonempty vertex set.
  Then, for every integer $\ell\ge 1$,
	\begin{equation*}
    \pr(\height(\cT_W) \geq \ell) \leq \frac{C_3|W|}{\ell},
	\end{equation*}	
\end{lemma}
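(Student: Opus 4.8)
The plan is to pass from the height of the whole oriented forest $\cT_W$ to the heights of the pasts of its roots, and then quote \cref{cor:ub:ind}. First I would record the structural fact that the roots of $\cT_W$ are exactly the vertices of $W$: since every edge of $\cT_W$ is oriented towards $W$, following oriented edges from any vertex terminates at some $w\in W$, so each maximal directed simple path in $\cT_W$ lies inside the past $\past^{\cT_W}(w)$ of its terminal vertex $w\in W$. Consequently
\[
  \height(\cT_W) = \max_{w\in W}\height\bigl(\past^{\cT_W}(w)\bigr).
\]

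Second, a union bound over the at most $|W|$ roots gives
\[
  \pr(\height(\cT_W)\ge\ell)
    \le \sum_{w\in W}\pr\bigl(\height(\past^{\cT_W}(w))\ge\ell\bigr).
\]
Third, I would use the (trivial but essential) observation that for $w\in W$ one has $W_w := W\cup\{w\}=W$, hence $\past^{\cT_W}(w)=\past^{W_w}(w)$, so that each summand is at most $\cQ_W(\ell)$ by the very definition of $\cQ_W$. Applying \cref{cor:ub:ind}, $\cQ_W(\ell)\le C_3/\ell$, and summing over the $|W|$ roots yields $\pr(\height(\cT_W)\ge\ell)\le C_3|W|/\ell$, as desired.

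All the substantive work — the self-improving recursive inequality of \cref{cl:q:ind} and its resolution in \cref{cor:ub:ind} — is already in place, so this lemma is a short wrap-up. The only point requiring a moment's care (and the closest thing here to an obstacle) is the identification of the root set of $\cT_W$ with $W$ together with the remark that $\cQ_W$ was defined via $W_u=W\cup\{u\}$, which collapses to $W$ precisely when $u\in W$; once that is observed the statement follows immediately from the union bound and \cref{cor:ub:ind}.
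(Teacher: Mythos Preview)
Your proposal is correct and follows exactly the paper's approach: the paper's proof is the single line ``Follows directly by the union bound and \cref{cor:ub:ind},'' and you have simply spelled out the two ingredients behind that sentence (identifying the roots of $\cT_W$ with $W$ so that $W_u=W$ for $u\in W$, and then summing $\cQ_W(\ell)\le C_3/\ell$ over the $|W|$ roots).
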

	for $C_3 = 69120\cdot D^4\bub_W(G)^3\log(192D\bub_W(G))$.
\begin{proof}
	Follows directly by the union bound and \cref{cor:ub:ind}.
\end{proof}

\subsection{Finishing up the proof}
In view of \cref{lem:mns}, it suffices to bound the $W$-bubble sum of a set $W$ with large enough capacity in order to prove \cref{thm:ub}.
This is the goal of the next simple claim.
Recall that $G=(V,E)$ be a connected graph on $n$ vertices, satisfying \ref{A:bal}, \ref{A:mix} and \ref{A:esc} with parameters $D,\alpha,\theta$ respectively, and that $\Cap_\r(\cdot)$ denotes the $\r$-capacity as defined in~\eqref{eq:cap}, where $\r=\r(\alpha)$ is as defined in~\eqref{eq:rs}.

\begin{claim}\label{cl:bubw}
  Let $\chi>0$ and let $W$ be a nonempty vertex set with $\Cap_\r(W)\ge \chi \r/\sqrt{n}$.  Then
  \begin{equation*}
    \bub_W(G)\le \theta + 2D + \frac{36D}{\chi^2}.
  \end{equation*}
\end{claim}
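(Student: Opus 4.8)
The plan is to split the sum defining $\bub_W(G)$ at time $\sqrt n$. For $t\le\sqrt n$ we discard the requirement of avoiding $W$: since $\pvec{W}^t(v,v)\le\vect{p}^t(v,v)$ for every $v$, we get $\sum_{t=0}^{\sqrt n}(t+1)\sup_{v}\pvec{W}^t(v,v)\le\sum_{t=0}^{\sqrt n}(t+1)\sup_{v}\vect{p}^t(v,v)\le\theta+2D$ by \cref{fact:esc}. This accounts for the first two terms of the claimed bound, so it remains to show $\sum_{t>\sqrt n}(t+1)\sup_{v}\pvec{W}^t(v,v)\le 36D/\chi^2$, and this is where the hypothesis $\Cap_\r(W)\ge\chi\r/\sqrt n$ enters.

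For the tail I would first decouple the event ``$X_t=v$'' from the event of avoiding $W$. Fix $v\notin W$ (the summand vanishes when $v\in W$) and $t\ge\r$; conditioning the walk at time $t-\r$ and bounding the last segment by $\vect{p}^\r(w,v)\le 2D/n$, which is \cref{fact:mix} and applies because $\r=n^{1/2-\alpha/3}\ge n^{1/2-\alpha}\ge\tmix(G)$ by \ref{A:mix}, yields $\pvec{W}^t(v,v)\le\tfrac{2D}{n}\,\pr_v(\tau_W>t-\r)$. Next I would show that the survival probability decays geometrically in blocks of length $\r$: since $\r\gg\log(n)\tmix(G)$, \cref{cl:cap:mix} applied with $r=t=\r$ gives $\pr_u(\tau_W<\r)\ge\tfrac13\Cap_\r(W)\ge\tfrac{\chi\r}{3\sqrt n}$ for \emph{every} vertex $u$; decomposing $\{X_0,\dots,X_m\notin W\}$ into the $\lfloor m/\r\rfloor$ consecutive length-$\r$ blocks $\{X_{i\r},\dots,X_{(i+1)\r-1}\notin W\}$ and applying the Markov property at the start of each block then gives $\pr_v(\tau_W>m)\le(1-\tfrac{\chi\r}{3\sqrt n})^{\lfloor m/\r\rfloor}$ for $v\notin W$.

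Combining the two estimates, for $t>\sqrt n$ (hence $t\ge\r$) we obtain $\sup_{v}\pvec{W}^t(v,v)\le\tfrac{2D}{n}(1-q')^{\lfloor(t-\r)/\r\rfloor}$ with $q'=\chi\r/(3\sqrt n)$. Substituting $t=\r+s$ and grouping the values of $s$ by $k=\lfloor s/\r\rfloor$, where each $k$ contributes at most $\r$ terms each with $t+1\le(k+2)\r$, bounds the tail by $\tfrac{2D}{n}\,\r^2\sum_{k\ge0}(k+2)(1-q')^k=\tfrac{2D}{n}\,\r^2\cdot\tfrac{1+q'}{(q')^2}\le\tfrac{4D\r^2}{n(q')^2}$. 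Since $q'=\chi\r/(3\sqrt n)$ gives $\r^2/(q')^2=9n/\chi^2$, the tail is at most $36D/\chi^2$, which finishes the proof.

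Everything above is a routine computation; the points that need care are the off-by-one bookkeeping in the two survival estimates — matching the event $\{\tau_W<\r\}$ supplied by \cref{cl:cap:mix} to the block events used to decompose $\{\tau_W>m\}$, and handling the $X_0=v$ boundary term — together with checking that $\r=n^{1/2-\alpha/3}$ is simultaneously $\ge\tmix(G)$ and $\gg\log(n)\tmix(G)$, both of which are immediate from \ref{A:mix}. As in \cref{cl:cap:mix}, the statement is to be read for $n$ sufficiently large.
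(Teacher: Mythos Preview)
Your proof is correct and follows essentially the same approach as the paper's: split the bubble sum, control the small-time part via \cref{fact:esc}, and for the tail decouple the return event from the $W$-avoidance event using \cref{fact:mix}, then bound the avoidance probability geometrically in $\r$-blocks via \cref{cl:cap:mix}, and sum. The only differences are cosmetic: the paper splits at $\s$ rather than $\sqrt{n}$ and conditions at time $t-\s$ rather than $t-\r$ for the decoupling, leading to the exponent $\lfloor (t-\s)/\r\rfloor$ in place of your $\lfloor (t-\r)/\r\rfloor$; the resulting arithmetic is identical and yields the same constant $36D/\chi^2$.
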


\begin{proof}
  Let $v$ be a vertex, and let $X$ be a lazy random walk on $G$.
  For $t\ge\s$ let $F_W(t)$ be the event that $W\cap X[0,t-\s)=\es$.
  By \cref{cl:cap:mix},
  \begin{equation*}
    \pr_v\left(F_W(t)\right)
    \le \left( 1 - \frac{\chi\r}{3\sqrt{n}}\right)^{\floor{(t-\s)/\r}}.
  \end{equation*}
  Thus, using \cref{fact:mix},
  \begin{equation*}
  	\pvec{W}^t(v,v)
      \leq \pr_v\left(F_W(t)\right)
           \cdot \pr_v\left(X_t=v\midd F_W(t)\right)
      \leq \left(1-\frac{\chi \r}{3\sqrt{n}}\right)^{\floor{(t-\s)/\r}} 
        \cdot\frac{2D}{n}.
  \end{equation*}
  We then have, using \cref{fact:esc} and the equality $\sum_{t=0}^\infty(t+1)x^t = (1-x)^{-2}$,
  \begin{align*}
  	\bub_W(G)
      = \sum_{t=0}^{\infty}(t+1)\sup_v\pvec{W}^t(v,v) 
  	  &\leq \sum_{t=0}^{\s}(t+1)\sup_v\vect{p}^t(v,v)
           +\sum_{k=0}^{\infty}
              \sum_{t=k\r+\s}^{(k+1)\r+s-1}(t+1)\sup_v\pvec{W}^t(v,v) \\
      &\leq \theta + 2D
           +\sum_{k=0}^{\infty}
              \sum_{t=k\r+\s}^{(k+1)\r+\s-1}2(k+1)\r
                \left(1-\frac{\chi \r}{3\sqrt{n}}\right)^{\floor{(t-\s)/\r}} \cdot\frac{2D}{n}\\
  	  &\leq \theta + 2D + \frac{4r^2D}{n}\sum_{k=0}^{\infty}
  	(k+1)\left(1-\frac{\chi \r}{3\sqrt{n}}\right)^k 
  	\leq \theta + 2D + \frac{36D}{\chi^2}.\qedhere
  \end{align*}
\end{proof}

\begin{proof}[Proof of \cref{thm:ub}]
  By \cref{lem:mns},
  \begin{equation*}
    \pr(\diam(\UST(G/W)) \ge C\sqrt{n})
    \le \pr(\height(\cT^W) \ge C\sqrt{n}/2)
    \le 2C_3 A/C.
  \end{equation*}
  By \cref{cl:bubw}, $C_3=C_3(D,\alpha,\theta,\chi)$ is bounded.
  The result follows by taking $C\ge 2C_3A/\eps$.
\end{proof}

\begin{proof}[Proof of the upper bound in \cref{thm:main}]
  Let $\eps>0$.
  Let $u,v$ be two independent stationary points in $G$ and let $\phi$ be the unique simple path between them in $\UST(G)$.
  By \cref{thm:lb} there exist $\chi,A>0$ depending only on $D,\alpha,\theta,\eps$ such that with probability at least $1-\eps/2$ we have that $|\phi|\le A\sqrt{n}$ and $\Cap_r(\phi)\ge \chi\r n^{-1/2}$.
  By \cref{cl:spatial,thm:ub}, on that event there exists $C=C(D,\alpha,\theta,\eps)$ such that with probability at least $1-\eps/2$,
  \begin{equation*}
    \diam(\UST(G)) \le |\phi| + \diam(\UST(G/\phi)) \le (A+C)\sqrt{n}. \qedhere
  \end{equation*}
\end{proof}

\begin{acknowledgements}
The authors wish to thank Tom Hutchcroft and Yinon Spinka for useful discussions and for their helpful comments on the paper.
This research is supported by ERC starting grant 676970 RANDGEOM and by ISF grant 1207/15.
\end{acknowledgements}

\bibliography{library}

\end{document}